\newtheorem{lemma}{Lemma}[section]
\newtheorem{proposition}{Proposition}[section]
\newtheorem{remark}{Remark}[section]
\numberwithin{equation}{section}
\renewcommand{\H}{\mathcal{H}}
\newcommand{\reals}{\mathbb{R}}
\renewcommand{\Re}{\text{Re }}
\newcommand{\prtl}{\ensuremath{\partial}}
\newcommand{\veps}{\ensuremath{\varepsilon}}
\newcommand{\calH}{\ensuremath{\mathcal{H}}} 
\newcommand{\calO}{\ensuremath{\mathcal{O}}} 
\newcommand{\R}{\mathbb{R}}
\renewcommand{\sp}{\omega_{2k_0}}
\newcommand{\sq}{\omega_{0}}
\newcommand{\bu}{{\bf u}}
\newcommand{\sgn}{{\rm sgn}}
\newcommand{\Hil}{{\rm H}}
\newcommand{\RomanNumeralCaps}[1]
\begin{document}

\title{ A Hamiltonian Dysthe equation for  deep-water gravity waves with constant vorticity}

\author{Philippe Guyenne}
\author{Adilbek Kairzhan}
\author{Catherine Sulem}
\address[Guyenne]{Department of Mathematical Sciences, University of Delaware, Newark, DE 19716, USA; guyenne@udel.edu }
\address[Kairzhan]{Department of Mathematics, University of Toronto, Toronto, Ontario M5S 2E4, Canada; 
kairzhan@math.toronto.edu }
\address[Sulem]{Department of Mathematics, University of Toronto, Toronto, Ontario M5S 2E4, Canada ; 
sulem@math.utoronto.ca}

\maketitle

\begin{abstract}
This paper is a study of the water wave problem in a two-dimensional domain of infinite depth in the presence of nonzero constant vorticity. A goal is to describe the effects of uniform shear flow on the modulation of weakly nonlinear quasi-monochromatic surface gravity waves. Starting from the Hamiltonian formulation of this problem and using techniques from Hamiltonian transformation theory, we derive a Hamiltonian Dysthe equation for the time evolution of the wave envelope. Consistent with previous studies, we observe that the uniform shear flow tends to enhance or weaken the modulational instability of Stokes waves depending on its direction and strength. Our method also provides a non-perturbative procedure to reconstruct the surface elevation from the wave envelope, based on the Birkhoff normal form transformation to eliminate all non-resonant triads. This model is tested against direct numerical simulations of the full Euler equations and against a related Dysthe equation recently derived by \cite{CCK18} in the context of constant vorticity. Very good agreement is found for a range of values of the vorticity.
\end{abstract}


{\bf MSC Codes } 76B15, 35Q55

\section{Introduction}

The water wave problem refers to the motion of a free surface over a body of water of finite of infinite depth. 
Its classical formulation usually assumes that the fluid is inviscid and irrotational. It is well known since the seminal work of \cite{Z68}
that, in this setting, the water wave equations can be written as a Hamiltonian system with a standard Darboux symplectic structure 
whose Hamiltonian is the total energy. The canonical conjugate variables are given by $(\eta,\xi)$,  
where $\eta(x,t)$ is the surface elevation and $\xi(x,t)$ denotes the boundary values of the velocity potential on the free surface.
With introduction of the Dirichlet--Neumann operator $G(\eta) $ that maps the Dirichlet boundary condition for a harmonic function 
in the fluid domain to its Neumann boundary condition, this Hamiltonian takes an explicit lower-dimensional form 
\[
H(\eta,\xi) = \frac{1}{2} \int_{\R} \Big[ \xi G(\eta)\xi + g \eta^2 \Big] dx \,,
\]
in terms of surface variables alone \citep{CS93}.
Moreover, the operator $G(\eta)$ is analytic with respect to $\eta$ for Lipschitz functions $\eta$ 
and this provides an expansion of the Hamiltonian near the stationary solution $w = (\eta,\xi) = 0$ 
which corresponds to a fluid at rest \citep{CM85}. The solution $w=0$ is an elliptic stationary point in dynamical systems terms.
In this Hamiltonian framework, perturbation calculations can be performed following general rules from Hamiltonian transformation theory,  
including canonical transformations and reduction to normal forms, to derive asymptotic models for weakly nonlinear water waves
while preserving the Hamiltonian character of the original system (see \cite{CGS21b} for a review). 

Recently, a number of theoretical investigations have been devoted to the water wave problem with nonzero vorticity due to 
its relevance to oceanography and coastal engineering where the influence of currents on waves may play an important role \citep{C01,SBSBV19}. 
Unlike the irrotational case, a full-dimensional computation is in general required to solve for the vorticity field.
This has prompted efforts to propose simplified models for rotational waves in the long-wave shallow-water regime \citep{CL14,RG15}.
Of special interest is the case of nonzero constant vorticity which corresponds to vertically shear flow with a linear profile.
The direction of the underlying current can be that of wave propagation (co-propagating) or opposite (counter-propagating).
Similar to the irrotational water wave problem, this particular case allows for a lower-dimensional reformulation of the governing equations.
As a consequence, it has received much attention in both mathematical and numerical studies,
regarding e.g. the existence and stability of steadily progressing wave solutions \citep{BP22,DH19,HW20,SMKD17,TP88,V96},
the flow structure beneath waves \citep{RMN17}, or the focusing of transient waves by an adverse current \citep{C09,MC15}.

Furthermore, as an extension of Zakharov's idea, a Hamiltonian formulation for nonlinear water waves 
with constant vorticity has been derived by \cite{W07} (see also \cite{CIP08}).
The associated symplectic structure in terms of $(\eta,\xi)$ is not canonical but a change of variables reduces the system to a canonical one.   
Based on this formulation, recent work has been conducted involving long-wave modeling in the Korteweg--de Vries regime \citep{W08}, 
rigorous mathematical analysis on the existence of quasi-periodic traveling wave solutions \citep{BFM21},
and direct numerical simulation of unsteady waves on deep or shallow water \citep{G17,G18}.
In particular, the reduction to surface variables makes it possible to solve the full equations
via efficient and accurate numerical solvers such as the boundary integral method, the conformal mapping technique
or the high-order spectral method.

This article is devoted to the effects of constant vorticity in the setting of weakly nonlinear surface gravity waves 
for which modulation theory is a classical  tool. 
Under consideration is the asymptotic scaling regime where approximate solutions are constructed as 
slow modulations of monochromatic waves in two space dimensions. 
For this problem, \cite{TKM12} derived a cubic nonlinear Schr\"odinger (NLS) equation governing the envelope of surface gravity waves 
on finite depth using the method of multiple scales. Their analysis was extended to gravity-capillary waves by \cite{HKAC18}
and to hydroelastic waves by \cite{GWM19}.
Similarly, \cite{CCK18} carried out the perturbation calculations up to an order higher and obtained a Dysthe equation
for gravity-capillary waves with constant vorticity on infinite depth. We also point out the earlier work of \cite{B98} who proposed an NLS equation for gravity waves in the presence of linear shear confined to a finite-depth layer near the free surface.

The classical theory for modulational analysis which describes the long-time evolution and stability of fast oscillatory solutions 
of nonlinear dispersive partial differential equations (PDEs) is based on the so-called modulational Ansatz for solutions 
in the form of a weakly nonlinear narrowband wave train. In the context of gravity water waves, 
the leading nontrivial terms give rise to the NLS equation for the evolution of the slowly varying wave envelope 
(see \cite{Z68} for the original derivation in the irrotational case). 
\cite{D79} later proposed a higher-order approximation for waves on deep water, 
which has since been extended to many other settings. 
The Dysthe equation and its variants are widely used in the oceanographic community because of their efficiency and ability
to describe waves of moderately large steepness. It has been observed that numerical solutions of the Dysthe model 
provide a better agreement with laboratory experiments than NLS and are able e.g. 
to emulate the asymmetry of propagating wave packets, a feature not captured by NLS \citep{GKSX21,LM85}.

Unlike the NLS equation which is a canonical Hamiltonian PDE, earlier versions of the  Dysthe equation  
do not preserve the Hamiltonian character of the primitive equations.  
\cite{GT11} used a Zakharov's four-wave interaction model obtained by \cite{K94} 
to derive a Hamiltonian version of Dysthe's equation for three-dimensional gravity waves on finite depth.  
\cite{CGS21a} and \cite{GKS22} considered the modulational regime for the two- and three-dimensional problem of gravity waves 
on deep water respectively, and derived the corresponding Dysthe equations directly from the Euler equations for potential flow 
through a sequence of canonical transformations.
By construction, the resulting Dysthe equations preserve the Hamiltonian character of the original problem.
The main objective of the present paper is to extend this approach to the modulation of weakly nonlinear wave trains 
in the presence of constant vorticity and derive a Dysthe equation that conforms with 
the Hamiltonian nature of the water wave system in this setting. 
We focus on the two-dimensional problem of wave propagation over infinite depth.

From a modeling viewpoint, it is desirable that such important structural properties as energy conservation 
are inherited by the approximation. Aside from interest in the Dysthe equation as an asymptotic model for water wave applications, 
this question is particularly relevant considering the successful and widespread use of the Hamiltonian formalism 
in physical sciences, including the field of fluid mechanics and free-surface flows \citep{BO82,K94}.
The associated mathematical tools as applied to this physical problem are thus also of interest.
Throughout our derivation, care is taken to perform both the expansion of the Hamiltonian functional 
and the canonical change of symplectic structure in a systematic manner,
starting from the basic formulation introduced by \cite{W07}.
Our calculations are facilitated by the fact that the Dirichlet--Neumann operator admits an explicit Taylor series expansion \citep{CS93}.
This property has been extensively used in previous studies on irrotational water waves \citep{L13} and can also be exploited here.

In modulation theory, reliance on a modulational Ansatz implies that an additional step is required to reconstruct
physical quantities such as the surface elevation from the solution of the envelope equation.
This step is determined as part of the asymptotic analysis and, in classical approaches like the method of multiple scales,
this reconstruction is typically carried out perturbatively via a Stokes-type expansion.
It is an important computation that also influences the model's performance, 
as shown e.g. in comparisons with experimental data on extreme waves \citep{ZGO15}.
In the present context, the reconstruction procedure is associated with
the Birkhoff normal form transformation to eliminate all non-resonant triads.
It is a non-perturbative computation but requires solving an auxiliary system of PDEs for the surface variables.
In this way, higher-order harmonic contributions to the wave spectrum are automatically generated from
the carrier wave component through nonlinear interactions according to these PDEs.
We provide a detailed derivation of this auxiliary system for nonzero constant vorticity,
noting that the resulting equations are significantly more complicated than in the irrotational case.
By definition, these equations also take the form of a Hamiltonian evolutionary system with a canonical symplectic structure.
As a consequence, the entire solution process fits within a Hamiltonian framework.
Based on this approximation, we conduct a linear stability analysis and examine the dependence on vorticity.
We then test analytical and numerical predictions from our model against direct numerical simulations of the full system,
by inspecting the time evolution of perturbed Stokes waves and their possible instability.
We also compare our results to numerical solutions of the related Dysthe model by \cite{CCK18}
and discuss the performance of the different reconstruction methods.
Previous observations on the focusing (resp. defocusing) effects of negative (resp. positive) vorticity
associated with a counter-propagating (resp. co-propagating) current are recovered.

The starting point of our approach is the water wave system in its Hamiltonian canonical form 
in terms of the surface elevation and a modified velocity potential. In Section \ref{sect2},
the Taylor expansion of the Hamiltonian is presented  and then expressed in terms of the complex symplectic coordinates 
that diagonalize its quadratic terms. Unlike the irrotational case, the linear dispersion relation is not an even function of wavenumber. 
In Section 3, we introduce elements of transformation theory and Birkhoff normal form transformations. 
A third-order Birkhoff normal form transformation that eliminates all non-resonant cubic terms from the Hamiltonian is explicitly calculated. 
It is defined as an auxiliary Hamiltonian flow from the original variables to transformed variables. 
In Section 4, we propose the new Hamiltonian truncated at fourth order
and, in Section 5, we use the modulational Ansatz together with a homogenization technique to derive the resonant quartic contributions.  
In Section 6, the Hamiltonian Dysthe equation is obtained for the wave envelope. 
Section 7 is devoted to a validation of our approximation through numerical simulation and comparison with other models.
A theoretical prediction on modulational stability of Stokes waves in the presence of constant vorticity is first established.  
We then explain the procedure to reconstruct the surface variables by inverting the third-order Birkhoff normal form transformation,
and we describe the numerical methods to solve the equations involved in the various models.
Finally, we present a numerical investigation where predictions from our Hamiltonian Dysthe equation are compared to those from
the envelope model by \cite{CCK18} and to direct simulations of the full water wave system.

\section{The water wave system} \label{sect2}

\subsection{Governing equations}
We consider the evolution of a free surface $\{ y = \eta(x,t) \}$ on top of a two-dimensional fluid of infinite depth
\[
\mathcal{S}(\eta) = \left\{ x \in \R, -\infty < y < \eta(x,t) \right\} \,,
\]
under the influence of gravity.   Assuming the  flow is  incompressible and  inviscid, the velocity field, denoted by $\bu(x,y,t) = (u(x,y,t), v(x,y,t))^\top$,
satisfies the Euler equations
\begin{equation}
\label{euler-eqns}
\prtl_t \bu + (\bu \cdot \nabla) \bu + \frac{1}{\rho} \nabla{P} - {\bf g} = {\bf 0} \,, 
\end{equation}
\[
\nabla \cdot \bu = 0 \,,
\]
where $\rho$ is the (constant) fluid density, $P(x,y,t)$ is the pressure and ${\bf g} = (0, -g)^\top$ is the acceleration due to gravity.
Hereafter, the symbol $\nabla$ denotes the spatial gradient $(\partial_x,\partial_y)^\top$ when applied to functions
or the variational gradient when applied to functionals.
The vorticity defined as 
\[
\gamma = \prtl_x v - \prtl_y u \,,
\]
satisfies
\[
\partial_t \gamma +(\bu \cdot  \nabla ) \gamma=0 \,,
\]
which expresses the  well-known fact  that, in two dimensions, the vorticity is conserved along particle trajectories.
In particular, if the initial vorticity is  constant throughout the fluid domain,  it remains constant. 
The present study is devoted to flows with the property of having nonzero constant vorticity. 

The boundary conditions at the free surface are composed of the dynamical condition
\[
P = P_0 \,,
\]
where $P_0$ is the atmospheric constant, and the kinematic condition
\begin{equation*} \label{kinematic}
v= \partial_t \eta + u \partial_x \eta \,.
\end{equation*}
The divergence-free condition implies the existence of a stream function $\psi$ such that
\[
u = \partial_y\psi \,, \quad v= -\partial_x \psi \,,
\]
satisfying
\[
-\Delta \psi = \gamma \,.
\]
By construction, 
$\widetilde \psi = \psi + \gamma y^2/2$
is a harmonic function.
Introducing the generalized potential $\varphi$ defined as the harmonic conjugate of $\widetilde \psi$, we have
\begin{align*}
&\partial_x \varphi = \partial_y \widetilde\psi = u + \gamma y \,, \\
&\partial_y \varphi = -\partial_x \widetilde\psi = v \,.
\end{align*}
The presence of constant vorticity induces a horizontal background shear current that has a linear profile in the vertical direction. 
We associate $\gamma > 0$ with a co-propagating current because it contributes positively to the horizontal fluid velocity
$u = \partial_x \varphi - \gamma y$ for $y < 0$ (along most of the water column),
while $\gamma < 0$ is associated with a counter-propagating current.

In the variables $\varphi$ and $\widetilde \psi$, Eq. \eqref{euler-eqns} takes the form
\[
\nabla\Big[ \partial_t \varphi + \frac{1}{2} \Big( (\partial_x \varphi)^2 + (\partial_y \varphi)^2 \Big) +\gamma \widetilde \psi  
- \gamma y \partial_x \varphi + P + g y \Big] = \bf{0} \,,
\]
and after integration, it reduces to 
\[
\partial_t \varphi + \frac{1}{2} \Big( (\partial_x \varphi)^2 + (\partial_y \varphi)^2 \Big) + \gamma \widetilde \psi  
- \gamma \eta \partial_x \varphi + g \eta = 0 \,,
\]
at the free surface $y=\eta$, with $P_0 = 0$ without loss of generality. 
The Euler system can then be written as
\begin{align}
&\Delta \varphi = 0 \quad {\textrm{in} } \;\;\; \mathcal{S} ~, \label{e1}\\
& \partial_t \eta - \partial_y \varphi + (\partial_x \varphi) (\partial_x \eta) - \gamma \eta \partial_x \eta = 0 \,, 
 \quad {\rm{on}} \ y=\eta(x,t) \,,   \label{e2}\\
& \partial_t \varphi + \frac{1}{2} \Big( (\partial_x \varphi)^2 + (\partial_y \varphi)^2 \Big) + \gamma \widetilde \psi  
- \gamma \eta \partial_x \varphi + g\eta = 0 \,,  \quad {\rm{on}} \ y=\eta(x,t) \,, \label{e3}
 \end{align}
with the condition that $\varphi, \widetilde \psi \to 0 $ uniformly in $x$ as $y\to -\infty$.

\subsection{Hamiltonian formulation}

It is well known since the seminal paper of \cite{Z68} that the irrotational ($\gamma=0$) water wave system has a Hamiltonian formulation in the variables $(\eta, \xi) $ where $ \xi(x,t) = \varphi(x,\eta(x,t),t)$ is the trace of the velocity potential on the free surface.

\cite{W07} (see also \cite{CIP08}) observed that, in the presence of constant vorticity, the water wave system in $(\eta, \xi)$ can  still be expressed as a Hamiltonian system but in non-canonical form, namely 
\begin{equation}
\label{ww-nearly-hamiltonian-equation}
\partial_t \begin{pmatrix}
\eta \\ \xi
\end{pmatrix} =  
\begin{pmatrix}
0 & 1 \\ -1 & \gamma \partial_x^{-1}
\end{pmatrix} \begin{pmatrix}
\partial_\eta H \\ \partial_\xi H
\end{pmatrix} \,.
\end{equation}
The Hamiltonian $H(\eta, \xi)$ is the total energy 
\begin{equation*}
\label{hamiltonian-initial}
H (\eta, \xi) = \frac{1}{2} \int_\reals \left[ \xi G(\eta) \xi - \gamma \eta^2 \partial_x \xi + \frac{\gamma^2}{3} \eta^3 + g \eta^2 \right] dx \,,
\end{equation*}
where $G(\eta)$ is the Dirichlet--Neumann operator (DNO) in the fluid domain, which associates to the Dirichlet data $\xi$ 
on $y=\eta(x)$ the normal derivative of the harmonic function $\varphi$ with an additional normalized factor, namely
$$
G(\eta) : \xi \longmapsto \sqrt{1+(\partial_x \eta)^2} \, \partial_{n} \varphi \big|_{y=\eta} \,.
$$
Other invariants of motion are the volume (or mass)
$$V= \int_\reals \eta dx \,,$$ 
and the momentum (or impulse)
\begin{equation} \label{moment}
I = \int_\reals \Big( \eta \partial_x \xi - \frac{1}{2} \gamma \eta^2 \Big) dx \,.
\end{equation}
\cite{W07} found that, under the change of variables 
\begin{equation} \label{xi-zeta-relation}
(\eta, \xi) \to \Big( \eta, \zeta = \xi-\frac{\gamma}{2} \partial_x^{-1} \eta \Big) \,,
\end{equation}
the above system \eqref{ww-nearly-hamiltonian-equation} can be transformed into canonical form 
\begin{equation}
\label{ww-hamiltonian-equation}
\partial_t \begin{pmatrix}
\eta \\ \zeta
\end{pmatrix} =  
J \, \nabla \H(\eta,\zeta)  =
\begin{pmatrix}
0 & 1 \\ -1 & 0
\end{pmatrix} \begin{pmatrix}
\partial_\eta \H \\ \partial_\zeta \H
\end{pmatrix} \,,
\end{equation}
where the Hamiltonian $\H(\eta, \zeta) = H(\eta, \xi)$ in the new variables is 
\begin{equation}
\label{hamiltonian}
\H (\eta, \zeta) = \frac{1}{2} \int_\reals \left[ \Big( \zeta + \frac{\gamma}{2} \partial_x^{-1} \eta \Big) G(\eta) \Big( \zeta + \frac{\gamma}{2} \partial_x^{-1} \eta \Big) - \gamma \eta^2  \partial_x\zeta - \frac{\gamma^2}{6} \eta^3 + g \eta^2 \right] dx \,.
\end{equation}
In the above formulas, $\partial_x^{-1} \eta$ is defined as
$\partial_x^{-1} \eta(x) = \int_{-\infty}^x \eta(s)ds.$
We assume that $\eta \to 0$ as $ x\to \pm \infty$. 
Furthermore, we assume that $\int_{-\infty}^{\infty}  \eta dx =0$ at $t=0$, which implies that it remains so for all $t$.
Indeed, integrating \eqref{e2} in $x$ over $\reals$, we get
$$\partial_t \int_\reals \eta dx - \int_\reals \Big( G(\eta) \xi - \frac{ \gamma }{2} \partial_x \eta^2 \Big) dx = 0 \,.$$
The  second term on the left-hand side rewrites as
$ \int_\reals G(\eta) \xi dx =  \int_{\partial\mathcal{S}} \partial_n \varphi d \sigma = \int_{\mathcal{S}} \Delta \varphi dx dy = 0$,
leading to $\int_\reals \eta dx = 0$ for all $t$.

Denoting  $\widehat f_k = \frac{1}{\sqrt{2 \pi}} \int_\reals e^{-ikx} f(x) dx$, the Fourier transform of a function $f(x)$,
we have in particular
\begin{equation*}
\partial_x^{-1} \eta(x) = - \frac{i}{\sqrt{2 \pi}} \int_\reals \frac{1}{k} \widehat \eta_k e^{ikx} dk \,, \quad \partial_x \zeta(x) = \frac{i}{\sqrt{2 \pi}} \int_\reals k \widehat \zeta_k e^{ikx} dk \,.
\end{equation*}
From the above assumption,
\begin{equation}
\label{zero-mass}
\widehat \eta_0 = 0 \,, \quad \widehat \eta_k = \mathcal{O}(k) \quad \text{for small} \, k \,.
\end{equation}
In the following, we will drop the hat notation. 
Since $\eta(x)$ and $\zeta(x)$ are real-valued functions, we have the relation $(\eta_{-k}, \zeta_{-k}) = (\overline \eta_k, \overline  \zeta_k)$
where the overbar stands for complex conjugation.


\subsection{Taylor expansion of the Hamiltonian near equilibrium}

It can be shown that the DNO is analytic in $\eta$ \citep{CM85} and admits a convergent Taylor series expansion
\begin{equation} \label{series}
G(\eta) = \sum_{m=0}^\infty G^{(m)}(\eta) \,,
\end{equation}
about the quiescent state $\eta=0$. For each $m$, the term $G^{(m)}(\eta)$ is homogeneous of degree $m$ in $\eta$ and can be calculated explicitly via  recursive relations \citep{CS93}.  Denoting  $D = -i \, \partial_x$, the first three terms are
\begin{equation}
\label{g-012-recursive}
\left\{ \begin{array}{l}
G^{(0)}(\eta)  = |D| \,, \\
G^{(1)}(\eta) = D \eta D - G^{(0)}\eta G^{(0)} \,, \\
G^{(2)}(\eta) = -\frac{1}{2} \left( |D|^2 \eta^2 G^{(0)}+ G^{(0)} \eta^2 |D|^2 - 2G^{(0)} \eta G^{(0)} \eta G^{(0)} \right) \,.
\end{array} \right.
\end{equation}
In Fourier variables, 
substituting the expansion for $G(\eta)$ into the Hamiltonian (\ref{hamiltonian}), we get
\begin{equation}
\label{HH}
\H = \H^{(2)} + \H^{(3)}+ \H^{(4)} + \dots \,,
\end{equation}
where each term $\H^{(m)}$ is homogeneous of degree $m$ in $(\eta,\zeta)$. In particular, we have
{\small
\begin{equation}
\label{homog-ham-initial}
\begin{aligned}
\H^{(2)} & = \frac{1}{2} \int_\reals \left[ |k| \Big( \zeta_{k} -  \frac{i \gamma}{2k} \eta_{k} \Big)
\Big( \overline \zeta_{k} + \frac{i \gamma}{2k} \overline \eta_{k} \Big) + g |\eta_{k}|^2 \right] dk \,, \\[3pt]
\H^{(3)} & = \frac{1}{2 \sqrt{2\pi}} \int_{\reals^3} \left[ (-k_1 k_3 - |k_1| |k_3|) \Big( \zeta_1 - \frac{i \gamma}{2k_1} \eta_1 \Big) \eta_2 \Big( \zeta_3 - \frac{i \gamma}{2k_3} \eta_3 \Big) - i \gamma k_1 \zeta_1 \eta_2 \eta_3 
 - \frac{\gamma^2}{6} \eta_1 \eta_2 \eta_3 \right] \delta_{123} dk_{123} \,, \\[3pt]
\H^{(4)} & = -\frac{1}{8 \pi} \int_{\reals^4} |k_1||k_4|(|k_1|+|k_4|-2|k_3+k_4|) \Big( \zeta_1 - \frac{i \gamma}{2k_1} \eta_1 \Big) \eta_2 \eta_3 \Big( \zeta_4 - \frac{i \gamma}{2k_4} \eta_4 \Big) \delta_{1234} dk_{1234} \,.
\end{aligned}
\end{equation}
}
In the above expressions, we use the compact notations
$(\eta_j, \zeta_j)= (\eta_{k_j}, \zeta_{k_j})$, $dk_{1 \dots n} = dk_1 \dots dk_n$, and $\delta_{1 \dots n} = \delta(k_1+ \dots + k_n)$ where $\delta(k) = \frac{1}{2\pi} \int_\reals e^{-ikx} dx$ is the Dirac distribution.
Hereafter, the domain of integration is omitted in integrals and is understood to be $\reals$ for each $x_j$ or $k_j$.

\subsection{Linearization near equilibrium}

The linearized water wave system about still water,  written in terms of $(\eta,\zeta)$, is
\begin{equation*}
\label{ww-hamiltonian-equation-lin}
\partial_t \begin{pmatrix}
\eta\\ \zeta
\end{pmatrix} =  
J \, \nabla \H^{(2)}(\eta,\zeta)  =
\begin{pmatrix} 
g- \frac{\gamma^2}{4} \partial_x^{-1} |D| \partial_x^{-1}  & -\frac{\gamma}{2}\partial_x^{-1}  |D| \\
\frac{\gamma}{2}\partial_x^{-1}  |D| & |D|
\end{pmatrix}
\begin{pmatrix}
\eta\\ \zeta
\end{pmatrix} \,.
\end{equation*}
We now introduce the symplectic complex coordinate
\begin{equation} \label{zeta-variable}
z := \frac{1}{\sqrt 2} \big( a(D)  \eta + i \, a(D) ^{-1} \zeta \big) \,,
\end{equation}
where $a^2(D)  := \omega(D)/|D|$ and $\omega(D)  := \sqrt{\gamma^2/4 + g|D|}$. The mapping $ (\eta,\zeta) \to (z,\bar z)$  is canonical, and in these variables, the water wave system becomes
\begin{equation}
\label{ww-hamiltonian-in-z}
\partial_t \begin{pmatrix}
z \\ \overline z
\end{pmatrix} = 
\begin{pmatrix}
0 & -i \\ i & 0
\end{pmatrix} \begin{pmatrix}
\partial_{z} \H \\ \partial_{\overline z} \H
\end{pmatrix} \, := J_1 \begin{pmatrix}
\partial_{z} \H \\ \partial_{\overline z} \H
\end{pmatrix} \,.
\end{equation}
Equivalently, in the Fourier space, 
\begin{equation}
\label{zeta-variable-fourier}
z_k := \frac{1}{\sqrt 2} \big( a_k \eta_k + i \, a_k^{-1} \zeta_k \big) \,,
\end{equation}
where $a_k^2 = \omega_k /|k|$ and $\omega_k= \sqrt{\gamma^2/4 + g|k|}$. 
Since the functions $\eta(x)$ and $\zeta(x)$ are real-valued in the physical space, we also have
\begin{equation}
\label{zeta-bar-variable-fourier}
\overline z_{-k} = \frac{1}{\sqrt 2} \big( a_k \eta_k - i \, a_k^{-1} \zeta_k \big) \,.
\end{equation}
We can express $\eta_k$ and $\zeta_k$ in terms of $z_k$ as
\begin{equation}
\label{eta-xi-via-zeta}
\eta_k = \frac{1}{\sqrt 2} a_k^{-1} (z_k + \overline z_{-k}), \quad \zeta_k = \frac{1}{i \sqrt{2}} a_k (z_k - \overline z_{-k}) ~.
\end{equation}
The quadratic term $\H^{(2)}$ given in (\ref{homog-ham-initial}) diagonalizes as  
\begin{equation}
\label{H2}
\H^{(2)} = \int
\Omega_k |z_k|^2 dk \,,
\end{equation}
where 
\begin{equation*}
\label{dispersive-relation}
\Omega_k = \Omega(k) = \frac{\gamma}{2} \sgn(k) + \omega_k \,,
\end{equation*}
is the linear dispersion relation for deep-water gravity waves with constant vorticity \citep{BFM21}.
The linearized system with $\H$ replaced by $\H^{(2)}$ in \eqref{ww-hamiltonian-in-z} 
reduces to the scalar equation $\partial_t z_k = -i \, \Omega_k z_k$.

We define the Poisson Bracket of  two functionals $K(\eta, \zeta)$ and $H(\eta, \zeta)$
of real-valued functions $\eta$ and $\zeta$ as
\begin{equation}
\label{poisson-bracket}
\{K, H\} = \int \big( \partial_\eta H \partial_\zeta K - \partial_\zeta H \partial_\eta K \big) dx \,.
\end{equation}
Assuming that $K$ and $H$ are real-valued, and using the Plancherel formula, it is written in terms of the Fourier variables as
$$
\begin{aligned}
&\{K, H\}  = \int \big( \partial_{\eta_k} H \overline{\partial_{\zeta_k} K} - \partial_{\zeta_k} H \overline{\partial_{\eta_k} K} \big) dk = \int \big( \partial_{\eta_k} H \partial_{\overline{\zeta}_k} K - \partial_{\zeta_k} H \partial_{\overline{\eta}_k} K \big) dk \\ 
& = \int \big( \partial_{\eta_k} H \delta_{\zeta_{-k}} K - \partial_{\zeta_k} H \partial_{\eta_{-k}} K \big) dk   
= \int \big( \partial_{\eta_{k_1}} H \partial_{\zeta_{k_2}} K - \partial_{\zeta_{k_1}} H \partial_{\eta_{k_2}} K \big) \delta_{12} dk_{12} \,.
\end{aligned}
$$
In terms of the complex symplectic coordinates, it becomes
\begin{equation}
\label{poisson-bracket-z}
\{K, H\} = \frac{1}{i} \int (\partial_{z_{k_1}} H \partial_{\overline z_{-k_2}} K - \partial_{\overline z_{- k_1}} H \partial_{z_{k_2}} K) \delta_{12} dk_{12} \,.
\end{equation}

\section{Birkhoff normal form transformations}

\subsection{Third-order term in the Hamiltonian}

The cubic term  $\H^{(3)}$ given in (\ref{homog-ham-initial})  can be further simplified due to symmetries.
We first state a simple  identity that will be useful throughout the paper.

\begin{lemma}
For any $(k_1, k_2, k_3) \in \mathbb{R}^3$ with $k_1+k_2+k_3 = 0$ and $k_j \ne 0$, we have
\begin{equation}
\label{sign-identity}
\sgn(k_1) \sgn(k_2) + \sgn(k_1) \sgn(k_3) + \sgn(k_2) \sgn(k_3) = -1 \,.
\end{equation}
\end{lemma}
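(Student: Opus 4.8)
The plan is to reduce everything to the three sign values $s_j := \sgn(k_j) \in \{-1,+1\}$ and exploit that $s_j^2 = 1$. The quantity to evaluate, $s_1 s_2 + s_1 s_3 + s_2 s_3$, is exactly the cross-term that appears when expanding a square, so I would first write
\begin{equation*}
(s_1 + s_2 + s_3)^2 = s_1^2 + s_2^2 + s_3^2 + 2\big( s_1 s_2 + s_1 s_3 + s_2 s_3 \big) = 3 + 2\big( s_1 s_2 + s_1 s_3 + s_2 s_3 \big) \,,
\end{equation*}
using $s_j^2 = 1$. Solving this for the cross-term, the identity \eqref{sign-identity} is equivalent to the assertion that $(s_1 + s_2 + s_3)^2 = 1$, i.e. that $s_1 + s_2 + s_3 = \pm 1$.

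The one genuine input is therefore to rule out the two degenerate values $s_1 + s_2 + s_3 \in \{-3, +3\}$, which occur precisely when all three signs coincide. This is where the constraint $k_1 + k_2 + k_3 = 0$ with $k_j \ne 0$ enters: if $s_1 = s_2 = s_3 = +1$ then $k_1 + k_2 + k_3 > 0$, and if $s_1 = s_2 = s_3 = -1$ then $k_1 + k_2 + k_3 < 0$, either way contradicting $k_1 + k_2 + k_3 = 0$. Hence the three signs cannot all agree, so exactly two of them take one value and the third the opposite, forcing $s_1 + s_2 + s_3 = \pm 1$ and thus $(s_1 + s_2 + s_3)^2 = 1$. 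Substituting back gives $1 = 3 + 2(s_1 s_2 + s_1 s_3 + s_2 s_3)$, whence the cross-term equals $-1$, as claimed.

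As an entirely equivalent alternative I could bypass the algebraic identity and argue by direct case analysis: since not all signs coincide, up to relabeling the only possibilities are $(s_1, s_2, s_3) = (+1, +1, -1)$ or $(-1, -1, +1)$, and in both cases a one-line computation gives $1 - 1 - 1 = -1$. I expect no real obstacle here; the proof is elementary. The only point requiring care is the justification that the signs are not all equal, which is the sole place the hypotheses $k_1 + k_2 + k_3 = 0$ and $k_j \ne 0$ are used, and I would state it explicitly rather than leave it implicit.
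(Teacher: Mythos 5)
Your proof is correct and complete. It does, however, take a different route from the paper: the paper's proof is a one-line instruction to check the identity sector by sector in the $(k_1,k_2)$-plane, i.e.\ a direct case enumeration, which is essentially the ``equivalent alternative'' you mention at the end. Your primary argument instead packages the case analysis into the algebraic identity $(s_1+s_2+s_3)^2 = 3 + 2(s_1s_2+s_1s_3+s_2s_3)$ and reduces everything to the single observation that the three signs cannot all coincide when $k_1+k_2+k_3=0$ with each $k_j \ne 0$. What your version buys is that it isolates exactly where the hypotheses are used and avoids any enumeration of sectors; what the paper's version buys is brevity (it is stated in one sentence) at the cost of leaving the verification entirely to the reader. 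Either is acceptable for a statement this elementary; your explicit justification that the signs are not all equal is a genuine improvement in rigor over the paper's phrasing.
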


\begin{proof}
Consider the different sectors of the $(k_1,k_2)$-plane and check that the equality is satisfied in each sector.
\end{proof}

\begin{lemma}
\label{lemma-H3-simplified}
The cubic term  $\H^{(3)}$   in  (\ref{homog-ham-initial}) can be written as 
\begin{equation}
\label{H3-in-eta-zeta}
\H^{(3)} = -\frac{1}{2\sqrt{2 \pi}} \int (1+\sgn(k_1) \sgn(k_3)) \Big( |k_1| |k_3| \zeta_1 \eta_2 \zeta_3 + i \frac{\gamma k_2}{2} \eta_1 \zeta_2 \eta_3 \Big) \delta_{123} dk_{123} \,.
\end{equation}
\end{lemma}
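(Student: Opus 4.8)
The plan is to start from the integrand of $\H^{(3)}$ in \eqref{homog-ham-initial}, multiply everything out, sort the resulting cubic monomials into the three types $\zeta\eta\zeta$, $\zeta\eta\eta$ and $\eta\eta\eta$, and simplify each type using the symmetry of the measure $\delta_{123}\,dk_{123}$ under permutations of the labels together with the identity \eqref{sign-identity}. Abbreviating $s_j := \sgn(k_j)$, the preparatory step is the factorization $-k_1 k_3 - |k_1||k_3| = -|k_1||k_3|(1 + s_1 s_3)$, followed by expanding $\big(\zeta_1 - \frac{i\gamma}{2k_1}\eta_1\big)\eta_2\big(\zeta_3 - \frac{i\gamma}{2k_3}\eta_3\big)$ into four monomials, using repeatedly that $|k_j|/k_j = s_j$. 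Together with the two explicit terms $-i\gamma k_1\zeta_1\eta_2\eta_3$ and $-\frac{\gamma^2}{6}\eta_1\eta_2\eta_3$, this produces six contributions that I would group by type.

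The $\zeta\eta\zeta$ contribution comes from a single monomial and already equals $-|k_1||k_3|(1+s_1 s_3)\zeta_1\eta_2\zeta_3$, which is precisely the first term of \eqref{H3-in-eta-zeta}; no rearrangement is needed, as both the coefficient and the monomial are already symmetric under $1\leftrightarrow 3$. The $\eta\eta\eta$ contribution is the delicate one: two monomials survive, namely $\frac{\gamma^2}{4}(1+s_1 s_3)\eta_1\eta_2\eta_3$ from the product (after using $s_1^2 s_3^2 = 1$) and $-\frac{\gamma^2}{6}\eta_1\eta_2\eta_3$. Since $\eta_1\eta_2\eta_3$ is fully symmetric, I would symmetrize the coefficient $\frac{\gamma^2}{4}(1+s_1 s_3)$ over all permutations of $(1,2,3)$; by \eqref{sign-identity} the average of $s_i s_j$ over the three pairs equals $-\tfrac13$, so the symmetrized coefficient becomes $\frac{\gamma^2}{4}\big(1-\tfrac13\big) = \frac{\gamma^2}{6}$, cancelling exactly against $-\frac{\gamma^2}{6}$. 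Thus $\H^{(3)}$ carries no pure $\eta^3$ term, and this is where the specific coefficient $\gamma^2/6$ appearing in \eqref{hamiltonian} is consumed.

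For the $\zeta\eta\eta$ contribution I would collect the three relevant monomials, relabel the dummy variables so that each has its single $\zeta$ in a common slot, and symmetrize over the two $\eta$ slots. Here the key is the auxiliary identity $s_1 + s_2 + s_3 + s_1 s_2 s_3 = 0$, which follows from \eqref{sign-identity}: multiplying the left-hand side by $s_1$ and using $s_1^2 = 1$ gives $1 + s_1 s_2 + s_1 s_3 + s_2 s_3 = 1 + (-1) = 0$, and division by $s_1 \ne 0$ yields the claim. With this identity the combined coefficient collapses to the required $i\frac{\gamma k_2}{2}(1+s_1 s_3)$ attached to $\eta_1\zeta_2\eta_3$, matching the second term of \eqref{H3-in-eta-zeta}. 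Reassembling the three types and restoring the overall factor $\frac{1}{2\sqrt{2\pi}}$ then gives the stated form.

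The main obstacle is organizational rather than conceptual: keeping the relabelings and the symmetrizations mutually consistent across the different monomial types, and being careful that a symmetrization applied to a coefficient is matched to the symmetry of the monomial it multiplies. The genuine content is concentrated in the two applications of \eqref{sign-identity} — most strikingly the cancellation of the $\eta^3$ terms, and secondarily the collapse of the $\zeta\eta^2$ coefficient via the derived identity $s_1+s_2+s_3+s_1 s_2 s_3 = 0$.
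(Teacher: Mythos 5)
Your proposal is correct and follows essentially the same route as the paper's proof: expand the product, cancel the $\eta^3$ contribution by symmetrizing its coefficient and applying \eqref{sign-identity} (which is exactly how the $\gamma^2/6$ term is consumed), and reduce the $\zeta\eta^2$ terms by relabelling the $\zeta$ into the middle slot, symmetrizing over the outer indices, and invoking \eqref{sign-identity} again. Your auxiliary identity $s_1+s_2+s_3+s_1s_2s_3=0$ is just an equivalent restatement of the relation $|k_2|(s_1+s_3)=-k_2(1+s_1s_3)$ that the paper uses at the same step.
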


\begin{proof}
Expanding the  brackets in \eqref{homog-ham-initial}, we regroup the terms as follows 
\begin{equation}
\label{H3-decomposed}
\begin{aligned}
\H^{(3)} &= \frac{1}{2\sqrt{2\pi}} \int  \Big[ (-k_1k_3-|k_1||k_3|) \zeta_1 \eta_2 \zeta_3 
 + (k_1k_3+|k_1||k_3|)  \frac{i \gamma}{k_1} \eta_1 \eta_2 \zeta_3  - i \gamma k_1 \zeta_1 \eta_2 \eta_3 \\
& \quad + \Big( (k_1k_3+|k_1||k_3|) \frac{\gamma^2}{4k_1 k_3} - \frac{\gamma^2}{6} \Big) \eta_1 \eta_2 \eta_3 \Big] \delta_{123} dk_{123} \,.
\end{aligned}
\end{equation}
The term on  the second line of \eqref{H3-decomposed} corresponding to $\eta_1 \eta_2 \eta_3$ vanishes because of the symmetry in $\eta_1 \eta_2 \eta_3$ under index rearrangements, and  identity \eqref{sign-identity}. Indeed, we have
$$
\begin{aligned}
\int (k_1k_3+|k_1||k_3|) \frac{\gamma^2}{4k_1 k_3} \eta_1 \eta_2 \eta_3 \delta_{123} dk_{123} & = \frac{\gamma^2}{4} \int (1+\sgn(k_1) \sgn(k_3)) \eta_1 \eta_2 \eta_3 \delta_{123} dk_{123} \,, \\
& = \frac{\gamma^2}{6} \int \eta_1 \eta_2 \eta_3 \delta_{123} dk_{123} \,.
\end{aligned}
$$
As a result, $\H^{(3)} $ simplifies to
\begin{equation}
\label{H3-decomposed-2}
\H^{(3)} = - \frac{1}{2\sqrt{2\pi}} \int \Big[ (1+\sgn(k_1) \sgn(k_3)) |k_1| |k_3| \zeta_1 \eta_2 \zeta_3 - i \gamma |k_1| \sgn(k_3) \zeta_1 \eta_2 \eta_3 \Big] \delta_{123} dk_{123} \,.
\end{equation}
The first term in \eqref{H3-decomposed-2} identifies to the first term in  \eqref{H3-in-eta-zeta}, while its second term can be transformed  using index rearrangements and  identity \eqref{sign-identity} as follows
\begin{align*}
&\int |k_1| \sgn(k_3) \zeta_1 \eta_2 \eta_3 \delta_{123} dk_{123}  = 
\int |k_2| \sgn(k_3) \eta_1 \zeta_2 \eta_3 \delta_{123} dk_{123} \\
&=  \int \frac{|k_2|}{2}  (\sgn(k_1) + \sgn(k_3)) \eta_1 \zeta_2 \eta_3 \delta_{123} dk_{123}
 = - \int  \frac{k_2}{2}\left(1 + \sgn(k_1) \sgn(k_3) \right) \eta_1 \zeta_2 \eta_3 \delta_{123} dk_{123} \,.
\end{align*}
\end{proof}
In terms of the complex symplectic coordinates, $\H^{(3)}$ is 
a linear combination of third-order monomials 
\begin{align} \label{H3-fourier-z}
\H^{(3)} = & \; \frac{1}{8 \sqrt{\pi}} \int
\frac{1+\sgn(k_1) \sgn(k_3)}{a_1 a_2 a_3} \Big( \omega_1 \omega_3 - \frac{\gamma \omega_2}{2} \sgn(k_2) \Big) \times \\[3pt]
&
\big( z_1 z_2 z_3 + \overline z_1 \overline z_2 \overline z_3 - 2( \overline z_{-1} \overline z_{-2} z_3 + z_{-1} z_{-2} \overline z_3 ) + \overline z_{-1} z_2 \overline z_{-3} + z_{-1} \overline z_2 z_{-3} \big) \delta_{123} dk_{123} \,.
\nonumber
\end{align}

\subsection{Canonical transformations}

We are looking for a canonical transformation of the physical variables
\begin{equation*}
\label{transformation}
\tau: w = \begin{pmatrix}
\eta \\ \zeta
\end{pmatrix} \longmapsto w' \,,
\end{equation*}
defined in a neighborhood of the origin, such that
the transformed Hamiltonian $\H'$ satisfies
$$
\H'(w') = \H(\tau^{-1}(w')) \,, \quad 
\partial_t w' = J \, \nabla \H'(w') \,,
$$
and reduces to
\begin{equation}
\label{HH-new}
\H'(w') = \H^{(2)}(w') + Z^{(4)} + Z^{(5 )} + \dots \,,
\end{equation}
where 
each term $Z^{(m)}$ is of degree $m$, and all cubic terms are eliminated. 
We construct the transformation $\tau$  by the Lie transform method as a Hamiltonian flow $\phi$ from ``time'' $s=-1 $ to ``time'' $s=0$ governed by
$$
\partial_s \phi = J \, \nabla K(\phi) \,, \quad \phi(w')|_{s=0} = w' \,, \quad 
\phi(w')|_{s=-1} = w \,,
$$
and associated to an auxiliary Hamiltonian $K$.
Such a transformation is canonical and preserves the Hamiltonian structure of the system. The Hamiltonian $\H'$ satisfies 
$\H'(w') = \H(\phi(w'))|_{s=-1}$ and its Taylor expansion around $s=0$ is
$$
\H'(w') = \H(\phi(w'))|_{s=0} - \frac{d \H}{ds}(\phi(w'))|_{s=0} + \frac{1}{2} \frac{d^2 \H}{ds^2}(\phi(w'))|_{s=0} - \dots
$$
Abusing  notations, we further use $w = (\eta,\zeta)^\top $ 
to denote the new variable $w'$.  Terms in this expansion can be expressed using Poisson brackets as 
\begin{equation*}
\begin{aligned}
\H(\phi(w))|_{s=0} & = \H(w) \,, \\
\frac{d \H}{ds}(\phi(w))|_{s=0} & = \int \left( \partial_\eta \H \partial_s \eta + 
\partial_\zeta \H \partial_s \zeta \right) dx \,, \\    
& = \int \left( \partial_\eta \H \partial_\zeta K - \partial_\zeta \H \partial_\eta K \right) dx = 
\{K, \H\}(w) \,,
\end{aligned}
\end{equation*}
and similarly for the remaining terms. The Taylor expansion of $\H'$ around $s=0$ now has the form 
$$
\H'(w) = \H(w) - \{K, \H\}(w) + \frac{1}{2} \{K, \{K, \H\}\}(w) - \ldots
$$
Substituting this transformation into  the expansion (\ref{HH}) of $H$, we obtain
\begin{equation}
\label{new-ham-expansion-before}
\begin{aligned}
   \H'(w) & = \H^{(2)}(w) + \H^{(3)}(w) + \dots \\
 & \quad - \{K,\H^{(2)} \} (w) -\{K, \H^{(3)}\} (w)  -\{K, \H^{(4)}\} (w) - \dots \\
 & \quad + \frac{1}{2}\{K,\{K,\H^{(2)} \}\} (w) +\frac{1}{2} \{K,\{K,\H^{(3)}\}\} (w) + \dots
\end{aligned}
\end{equation}
If $K$ is homogeneous of degree $m$ and $\H^{(n)}$ is homogeneous of degree $n$, then $\{K,\H^{(n)} \}$ is of degree $m+n-2$. 
Thus, in order to  get the Hamiltonian as in (\ref{HH-new}), we need to
construct, if possible, an auxiliary Hamiltonian $K=K^{(3)}$ that is homogeneous of degree $3$ and satisfies the relation 
\begin{equation} \label{cohomological-relation}
   \H^{(3)}- \{K^{(3)},\H^{(2)} \} = 0 \,,
\end{equation}
which would eliminate all cubic terms from the transformed Hamiltonian $\H'$. In the following, we will show that it is possible to do so,
and that there are no resonant cubic terms in the Hamiltonian.

\subsection{Third-order Birkhoff normal form}

To find the auxiliary Hamiltonian $K^{(3)}$ from \eqref{cohomological-relation}, 
we use the following diagonal property of the coadjoint operator ${\rm coad}_{\H^{(2)}} := \{\cdot, \H^{(2)}\}$ when applied to monomial terms \citep{CS16}.
For example,
taking 
$
\mathcal{I} := \int z_1 z_2 \overline z_{-3} \delta_{123} dk_{123},
$
we have 
\begin{equation} \label{ad-H2}
\{ \mathcal{I}, \H^{(2)}\}  =  
i \int (\Omega_1 + \Omega_2 - \Omega_{-3})   z_1 z_2 \overline z_{-3}  \delta_{123} dk_{123} \,,
\end{equation}
where $\Omega_{\pm j} := \Omega_{\pm k_j}$. 

\begin{proposition} 
\label{proposition-on-K3}
The cohomological equation \eqref{cohomological-relation} has a unique solution $K^{(3)}$ which, in complex symplectic coordinates, is 
\begin{align} \label{K3-fourier-z}
K^{(3)} = & \; \frac{1}{8i \sqrt{\pi}} \int
\frac{1+\sgn(k_1) \sgn(k_3)}{a_1 a_2 a_3} \Big( \omega_1 \omega_3 - \frac{\gamma \omega_2}{2} \sgn(k_2) \Big) \delta_{123} \times \\
&
\left( \frac{z_1 z_2 z_3 - \overline z_1 \overline z_2 \overline z_3}{\Omega_1 + \Omega_2 + \Omega_3} + 2\frac{\overline z_{-1} \overline z_{-2} z_3 - z_{-1} z_{-2} \overline z_3 }{\Omega_{-1} + \Omega_{-2} - \Omega_3} - \frac{\overline z_{-1} z_2 \overline z_{-3} - z_{-1} \overline z_2 z_{-3}}{\Omega_{-1} - \Omega_2 + \Omega_{-3}} 
\right) dk_{123} \,.
\nonumber
\end{align}
Alternatively, in the $(\eta,\zeta)$ variables, $K^{(3)}$  has the form
{\small \begin{align}
\label{K3-fourier}
K^{(3)} =& \; \frac{1}{4 i \sqrt{2 \pi}} \int 
(1+\sgn(k_1) \sgn(k_3)) \delta_{123} \times \\
& \nonumber
\Big[ -\frac{\frac{\gamma^4}{8} + \frac{\gamma^2}{2} g|k_2| + g^2 |k_1| |k_3|}{g^2|k_1| |k_3|}
\Big( -\frac{\gamma}{2}\sgn(k_2) \eta_1 \eta_2 \eta_3 + i|k_2| \eta_1 \zeta_2 \eta_3 - 2i|k_3| \eta_1 \eta_2 \zeta_3 \Big) \\
& + \frac{\gamma \, \sgn(k_2) }{g^2|k_1| |k_3|} \Big( 2 \omega_3^2 |k_1| |k_2| \zeta_1 \zeta_2 \eta_3 - \omega_2^2 |k_1||k_3| \zeta_1 \eta_2 \zeta_3 + i \frac{\gamma \, \sgn(k_2) }{2}|k_1| |k_2| |k_3| \zeta_1 \zeta_2 \zeta_3 \Big) \Big] dk_{123} \,.
\nonumber
\end{align}}
\end{proposition}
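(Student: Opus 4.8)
The plan is to solve the cohomological equation \eqref{cohomological-relation} monomial by monomial in the complex symplectic coordinates, exploiting the diagonal action of $\mathrm{coad}_{\H^{(2)}} = \{\,\cdot\,,\H^{(2)}\}$ on Fourier monomials. First I would posit $K^{(3)}$ as a linear combination of the same six cubic monomials that appear in $\H^{(3)}$ in \eqref{H3-fourier-z}, namely $z_1 z_2 z_3$, $\overline z_1\overline z_2\overline z_3$, $\overline z_{-1}\overline z_{-2} z_3$, $z_{-1} z_{-2}\overline z_3$, $\overline z_{-1} z_2\overline z_{-3}$ and $z_{-1}\overline z_2 z_{-3}$, each carried by $\delta_{123}$ with an unknown coefficient function of $(k_1,k_2,k_3)$. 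Because $\H^{(2)}=\int \Omega_k|z_k|^2\,dk$ is diagonal, the computation \eqref{ad-H2} generalizes to the rule that the bracket of any such monomial with $\H^{(2)}$ multiplies it by $i$ times (the sum of $\Omega$ over its $z$-factors minus the sum of $\Omega$ over its $\overline z$-factors).

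Applying this rule to the six monomials produces the frequency combinations $\Omega_1+\Omega_2+\Omega_3$, $-(\Omega_1+\Omega_2+\Omega_3)$, $\Omega_3-\Omega_{-1}-\Omega_{-2}$, $\Omega_{-1}+\Omega_{-2}-\Omega_3$, $\Omega_2-\Omega_{-1}-\Omega_{-3}$ and $\Omega_{-1}+\Omega_{-3}-\Omega_2$, respectively. Matching the coefficients of each monomial in \eqref{cohomological-relation} then forces the coefficient of $K^{(3)}$ to equal the corresponding coefficient of $\H^{(3)}$ divided by $i$ times its frequency combination; the three sign-reversed pairs combine into the antisymmetric numerators $z_1z_2z_3-\overline z_1\overline z_2\overline z_3$, $\overline z_{-1}\overline z_{-2}z_3-z_{-1}z_{-2}\overline z_3$ and $\overline z_{-1}z_2\overline z_{-3}-z_{-1}\overline z_2 z_{-3}$, and their denominators collapse to the three shown in \eqref{K3-fourier-z}. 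This simultaneously proves uniqueness, since the coefficients are determined with no freedom, and yields the stated formula, provided that no denominator vanishes.

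The main obstacle is exactly this last proviso, the non-resonance of cubic interactions. Here I would first observe that $\Omega_k=\frac{\gamma}{2}\sgn(k)+\omega_k>0$ for every $k\ne0$, since $\omega_k=\sqrt{\gamma^2/4+g|k|}>|\gamma|/2$; hence the sum denominator $\Omega_1+\Omega_2+\Omega_3$ is strictly positive and never vanishes. Both difference denominators, after imposing $k_1+k_2+k_3=0$ and relabelling, reduce to the three-wave resonance condition $\Omega_p+\Omega_q=\Omega_{p+q}$ with $p,q,\,p+q\ne0$. I would exclude this by a case analysis over the sign sectors of $(p,q)$: when $p$ and $q$ share a sign, strict concavity of $\omega$ gives $\omega(|p|)+\omega(|q|)-\omega(|p+q|)>\omega(0)=|\gamma|/2$, which together with the $\sgn$ contribution makes $\Omega_p+\Omega_q-\Omega_{p+q}>0$; when the signs differ, monotonicity of $\omega$ and the value $\omega(0)=|\gamma|/2$ again yield strict positivity in each subcase. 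Thus both difference denominators are strictly positive on the constraint surface, and $K^{(3)}$ is well defined.

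Finally, to obtain the alternative form \eqref{K3-fourier}, I would substitute the inverse relations \eqref{eta-xi-via-zeta} expressing $\eta_k,\zeta_k$ in terms of $z_k,\overline z_{-k}$ back into \eqref{K3-fourier-z}, using $a_k^2=\omega_k/|k|$ and $\Omega_{\pm k}=\pm\frac{\gamma}{2}\sgn(k)+\omega_k$ to rewrite the three dispersion denominators. This is a routine but lengthy algebraic reduction; I expect the combination $\frac{\gamma^4}{8}+\frac{\gamma^2}{2}g|k_2|+g^2|k_1||k_3|$ to emerge after clearing the products of dispersion relations in the common denominators, while the symmetry factor $1+\sgn(k_1)\sgn(k_3)$ persists throughout, giving the form displayed in \eqref{K3-fourier}.
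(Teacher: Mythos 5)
Your proposal is correct and follows essentially the same route as the paper: solve \eqref{cohomological-relation} monomial by monomial using the diagonal action \eqref{ad-H2} of $\{\cdot,\H^{(2)}\}$, which fixes each coefficient uniquely as the corresponding $\H^{(3)}$ coefficient divided by $i$ times the frequency combination, and then pass to $(\eta,\zeta)$ via \eqref{zeta-variable-fourier}--\eqref{zeta-bar-variable-fourier} (the paper first flips $(k_1,k_2,k_3)\to(-k_1,-k_2,-k_3)$ in three of the monomials so that only $z_j$ and $\overline z_{-j}$ appear before substituting). Your explicit sector-by-sector verification that the denominators $\Omega_1+\Omega_2+\Omega_3$ and $\Omega_{-1}+\Omega_{-2}-\Omega_3$ never vanish is a welcome addition, since the paper asserts the absence of resonant triads without spelling out this argument.
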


\begin{proof}
Using (\ref{H3-fourier-z}) and diagonal properties of the coadjoint operator as in (\ref{ad-H2}), we  uniquely solve (\ref{cohomological-relation}) for $K^{(3)}$ and derive (\ref{K3-fourier-z}). 

To obtain the expression (\ref{K3-fourier}), we first need to rewrite (\ref{K3-fourier-z}) as a linear combination of third-order terms in $z_j$ and $z_{-j}$ only ($j = 1,2,3$). This can be done by applying the change of indices $(k_1, k_2, k_3) \to (-k_1, -k_2, -k_3)$ to monomials $\overline z_1 \overline z_2 \overline z_3$, $ z_{-1} z_{-2} \overline z_3$ and $z_{-1} \overline z_2 z_{-3}$. Then (\ref{K3-fourier}) is obtained 
in terms of $(\eta_k,\zeta_k)$ using the relations (\ref{zeta-variable-fourier})--(\ref{zeta-bar-variable-fourier}).
\end{proof}

\begin{remark}
The auxiliary Hamiltonian $K^{(3)}$ obtained in Proposition \ref{proposition-on-K3} is well-defined. There are no singularities due to the denominators 
$k_1$, $k_3$ 
since $\eta_k = \mathcal{O}(k)$ as $k \to 0$.
\end{remark}

Below we write an alternate representation of the auxiliary Hamiltonian $K^{(3)}$ which will be useful later when
we compute the Poisson bracket $ \{   \H^{(3)}, K^{(3)} \}$. Introduce the coefficient functions
\begin{equation}
\label{S123-A123}
S_{123} := \frac{1+\sgn (k_1) \sgn (k_3)}{a_1 a_2 a_3} \Big( k_1 k_3 a_1^2 a_3^2 - \frac{\gamma}{2} k_2 a_2^2 \Big) \,, \quad
A_{123} := \frac{1}{8\sqrt{\pi}} (S_{123} + S_{312} - S_{231}) \,.
\end{equation}
Note  that 
 $S_{123} = S_{321}$.
\begin{lemma}
We have 
\begin{equation}
\label{H3-K3-good-form}
\begin{aligned}
& \H^{(3)} = \int A_{123} (z_1 z_2 z_3 + \overline z_1 \overline z_2 \overline z_3 - z_{-1} z_{-2} \overline z_3 - \overline z_{-1} \overline z_{-2} z_3) \delta_{123} dk_{123} \,, \\[3pt]
& K^{(3)} = \frac{1}{i} \int \left[ \frac{A_{123}(z_1 z_2 z_3 - \overline z_{1} \overline z_{2} \overline z_{3})}{\Omega_1 + \Omega_2 + \Omega_3}
- \frac{A_{123}(z_{-1} z_{-2} \overline z_{3} - \overline z_{-1} \overline z_{-2} z_3)}{\Omega_{-1} + \Omega_{-2} - \Omega_3} \right] \delta_{123} dk_{123} \,.
\end{aligned}
\end{equation}
\end{lemma}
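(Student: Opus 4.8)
The plan is to read off both identities from the complex-coordinate expressions \eqref{H3-fourier-z} and \eqref{K3-fourier-z} by relabeling dummy variables and using the symmetry of $\delta_{123}$, so that the six monomials collapse onto the two standard types appearing in \eqref{H3-K3-good-form}. The first step is to recognize the coefficient. On the support of $1+\sgn(k_1)\sgn(k_3)$ one has $\sgn(k_1)\sgn(k_3)=1$, so with $a_k^2=\omega_k/|k|$ (cf. \eqref{zeta-variable-fourier}) we get $k_1k_3a_1^2a_3^2=\omega_1\omega_3$ and $\tfrac{\gamma}{2}k_2a_2^2=\tfrac{\gamma}{2}\sgn(k_2)\omega_2$; hence the prefactor in \eqref{H3-fourier-z} and \eqref{K3-fourier-z} is exactly $S_{123}$ of \eqref{S123-A123}. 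I would also record the elementary identity $S_{123}=S_{321}$, immediate from the definition, which in turn gives $S_{132}=S_{231}$ and $S_{213}=S_{312}$.

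For $\H^{(3)}$ I treat the two families separately. The terms $z_1z_2z_3$ and $\overline z_1\overline z_2\overline z_3$ carry coefficient $S_{123}$; because these monomials and $\delta_{123}$ are fully symmetric, $S_{123}$ may be replaced by $S_{123}+S_{312}-S_{231}$, the extra pair integrating to zero since each relabels to $S_{123}$. Restoring the prefactor $\tfrac{1}{8\sqrt\pi}$ turns this into $A_{123}$. For the mixed family I collect the $-2\,z_{-1}z_{-2}\overline z_3$ contribution together with the $z_{-1}\overline z_2z_{-3}$ contribution; the relabeling $(k_1,k_2,k_3)\mapsto(k_1,k_3,k_2)$ carries $z_{-1}\overline z_2z_{-3}$ into the standard monomial $z_{-1}z_{-2}\overline z_3$ and sends $S_{123}\mapsto S_{132}$. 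The resulting numerator $-2S_{123}+S_{132}$, symmetrized over the exchange $1\leftrightarrow2$ that fixes the standard monomial and simplified with $S_{132}=S_{231}$, $S_{213}=S_{312}$, equals $-(S_{123}+S_{312}-S_{231})$, i.e. the coefficient $-A_{123}$. This is the first line of \eqref{H3-K3-good-form}; the conjugate monomials follow by reality of the coefficients.

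For $K^{(3)}$ the bookkeeping is identical, the only new point being that the frequency denominators must follow the relabelings. The denominators $\Omega_1+\Omega_2+\Omega_3$ and $\Omega_{-1}+\Omega_{-2}-\Omega_3$ are respectively fully symmetric and invariant under $1\leftrightarrow2$, so the symmetrizations above leave them unchanged; and the same relabeling $(k_1,k_2,k_3)\mapsto(k_1,k_3,k_2)$ that standardizes the $z_{-1}\overline z_2z_{-3}$ term sends $\Omega_{-1}-\Omega_2+\Omega_{-3}\mapsto\Omega_{-1}+\Omega_{-2}-\Omega_3$, exactly the denominator already carried by the $z_{-1}z_{-2}\overline z_3$ term. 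Hence the numerators combine precisely as for $\H^{(3)}$, and the two families give $\tfrac{1}{i}\tfrac{A_{123}(z_1z_2z_3-\overline z_1\overline z_2\overline z_3)}{\Omega_1+\Omega_2+\Omega_3}$ and $-\tfrac{1}{i}\tfrac{A_{123}(z_{-1}z_{-2}\overline z_3-\overline z_{-1}\overline z_{-2}z_3)}{\Omega_{-1}+\Omega_{-2}-\Omega_3}$, which is the second line of \eqref{H3-K3-good-form}.

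The one delicate point is to track the dummy-variable relabelings simultaneously on the coefficient $S$ and on the energy denominators, and to keep in mind that $A_{123}$ is not forced by a pointwise identity: it is merely a convenient representative, the chosen combination $S_{123}+S_{312}-S_{231}$ being singled out only up to remainders that vanish upon integration against the (partially) symmetric monomials. Verifying that precisely this combination survives, using only $S_{123}=S_{321}$, is where the computation must be done with care; everything else is relabeling.
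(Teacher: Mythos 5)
Your argument is correct and follows essentially the same route as the paper: identify the prefactor in \eqref{H3-fourier-z} with $S_{123}$, then use dummy-index relabelings (exploiting $S_{123}=S_{321}$ together with the symmetry of the monomials and of $\delta_{123}$) to collapse the six monomial types onto the two standard ones with coefficients $\pm A_{123}$. The only cosmetic difference is that for $K^{(3)}$ the paper simply invokes the cohomological equation \eqref{cohomological-relation} and the diagonal property \eqref{ad-H2} once $\H^{(3)}$ is in the stated form, whereas you re-run the relabelings directly on \eqref{K3-fourier-z} and verify that the energy denominators transform consistently (in particular that $\Omega_{-1}-\Omega_2+\Omega_{-3}$ is carried onto $\Omega_{-1}+\Omega_{-2}-\Omega_3$ under the swap of $k_2$ and $k_3$) --- both are valid.
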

\begin{proof}
From \eqref{H3-fourier-z}, using that $k_1 k_3 = |k_1| |k_3|$ whenever $1+\sgn(k_1) \sgn(k_3) \neq 0$, we have 
\begin{equation}
\label{H3-fourier-z-intermediate}
\begin{aligned}
\H^{(3)} = \frac{1}{8 \sqrt{\pi}} \int S_{123} \Big( & (z_1 z_2 z_3 + \overline z_1 \overline z_2 \overline z_3) 
- 2 (z_{-1} z_{-2} \overline z_3 + \overline z_{-1} \overline z_{-2} z_3 ) \\
& + (\overline z_{-1} z_2 \overline z_{-3} + z_{-1} \overline z_2 z_{-3}) \Big) \delta_{123} dk_{123} \,.
\end{aligned}
\end{equation}
By symmetry, the first term on the right-hand side in \eqref{H3-fourier-z-intermediate} becomes
\begin{equation}
\label{zzz-symmetry-identity}
\frac{1}{8 \sqrt{\pi}} \int S_{123} (z_1 z_2 z_3 + \overline z_1 \overline z_2 \overline z_3) \delta_{123} dk_{123} = \int A_{123} (z_1 z_2 z_3 + \overline z_1 \overline z_2 \overline z_3) \delta_{123} dk_{123} \,.
\end{equation}
The second term in \eqref{H3-fourier-z-intermediate} has two copies of $(z_{-1} z_{-2} \overline z_3 + \overline z_{-1} \overline z_{-2} z_3 )$. We keep one copy as it is, and apply two index rearrangements to the second copy. First, $(k_1,k_2,k_3)\to (k_3,k_2,k_1)$ gives
$$ 
\int S_{123}
(z_{-1} z_{-2} \overline z_3 + \overline z_{-1} \overline z_{-2} z_3) \delta_{123}dk_{123} = \int S_{123}
(\overline{z}_{1} z_{-2} z_{-3} + z_{1} \overline{ z}_{-2} \overline{z}_{-3}) \delta_{123}dk_{123} \,,
$$
where we use that $S_{123} = S_{321}$. Second, $(1,2,3) \to (3,1,2)$ implies 
$$
 \int S_{123}
(\overline{z}_{1} z_{-2} z_{-3} + z_{1} \overline{ z}_{-2} \overline{z}_{-3}) \delta_{123}dk_{123} =  \int S_{312}
( z_{-1} z_{-2} \overline{z}_{3} + \overline{ z}_{-1} \overline{z}_{-2} z_3) \delta_{123}dk_{123} \,.
$$
For the third term in (\ref{H3-fourier-z-intermediate}), we apply $(k_1,k_2,k_3)\to (k_2,k_3,k_1)$ and leave details to the reader. Combining all the transformations above, we obtain 
$$
\begin{aligned}
\frac{1}{8 \sqrt{\pi}} & \int S_{123} \Big(- 2(z_{-1} z_{-2} \overline z_3 + \overline z_{-1} \overline z_{-2} z_3 ) + \overline z_{-1} z_2 \overline z_{-3} + z_{-1} \overline z_2 z_{-3} \Big) \delta_{123} dk_{123} \\
& = \int A_{123}(-z_{-1} z_{-2} \overline z_3 - \overline z_{-1} \overline z_{-2} z_3 ) \delta_{123} dk_{123} \,,
\end{aligned}
$$
which together with \eqref{zzz-symmetry-identity} implies the first relation of \eqref{H3-K3-good-form}. The second equation in \eqref{H3-K3-good-form} is derived from the relation \eqref{cohomological-relation} by using the diagonal properties \eqref{ad-H2}.
\end{proof}


The third-order normal form transformation defining the new coordinates is obtained as the solution map at $s=0$ of 
the Hamiltonian flow 
\begin{equation}
\label{normal-form-transformation-h}
\partial_s   
\begin{pmatrix}
\eta \\ \zeta
\end{pmatrix} = 
\begin{pmatrix}
0 & 1 \\
-1 & 0
\end{pmatrix}
\begin{pmatrix}
\partial_{\eta}K^{(3)} \\
\partial_{\zeta} K^{(3)}
\end{pmatrix} \,,
\end{equation}
with initial conditions at $s= -1$ being the original variables. 

To write $K^{(3)}$  in the physical space, it is convenient to introduce 
 $\widetilde \eta =  \Hil \eta$, 
 $\widetilde \zeta = \Hil \zeta$  where $ \Hil = - i \, \sgn(D)$ is the Hilbert transform.   In the following,  we will use the  identities $\Hil |D| = -\partial_x$, $\Hil |D|^{-1} = \partial_x^{-1}$ and similar ones.

 

\begin{proposition}
\label{proposition-K3-physical}
The auxiliary Hamiltonian $K^{(3)}$ in (\ref{K3-fourier}) can be written in the physical space as
\begin{equation}
\label{K3-physical-new}
\begin{aligned}
K^{(3)} = & \int \left[ \frac{1}{2} \widetilde{\eta}^2 \partial_x \widetilde\zeta 
 - \frac{\gamma}{4g} (g \eta^2 \widetilde \eta - \zeta^2 \partial_x \eta +2 \zeta 
\widetilde \eta \partial_x \widetilde\zeta) \right.
 \\
&
 -\frac{\gamma^2}{4g^2}  \Big(g (\partial_x^{-1} \eta) (\partial_x \eta) \zeta + g\eta  \widetilde \eta \widetilde\zeta -
g \widetilde\eta  (\partial_x \widetilde \zeta ) \partial_x^{-1} \eta   - \frac{1}{2}  \zeta^2 \partial_x \widetilde  \zeta \Big)\\
&  - \frac{\gamma^3}{16g^2}  \Big( \zeta^2 \widetilde \eta  +2  \zeta(\partial_x \widetilde\zeta)  \partial_x^{-1} \eta 
-2g (\partial_x^{-1} \eta) (\partial_x \widetilde\eta) (\partial_x^{-1}\widetilde\eta) \Big) \\
& \left. - \frac{\gamma^4}{16g^2}  (\eta \widetilde \zeta -\widetilde \eta \zeta) \partial_x^{-1} \eta   
- \frac{\gamma^5}{64g^2}   \widetilde \eta (\partial_x^{-1} \eta)^2 \right] dx \,.
\end{aligned}
\end{equation}

\end{proposition}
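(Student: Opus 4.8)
The statement is a transcription of the Fourier-space formula \eqref{K3-fourier} into physical space, so the plan is to build a dictionary between the two representations and then carry out the substitution term by term. With the convention $\widehat f_k=(2\pi)^{-1/2}\int e^{-ikx}f\,dx$, a cubic monomial integrated against $\delta_{123}$ becomes a pointwise product, $\int \widehat f_1\widehat g_2\widehat h_3\,\delta_{123}\,dk_{123}=\sqrt{2\pi}\int fgh\,dx$; combined with the prefactor $\tfrac{1}{4i\sqrt{2\pi}}$ in \eqref{K3-fourier}, this fixes the overall constant at $\tfrac{1}{4i}$. The single-variable Fourier multipliers attached to each slot are then read off as operators: $|k_j|\mapsto|D|$, $|k_j|^{-1}\mapsto|D|^{-1}$, $k_j\mapsto D=-i\partial_x$, $\sgn(k_j)\mapsto\sgn(D)=i\Hil$ (since $\Hil=-i\,\sgn(D)$), and $\omega_j^2=\tfrac{\gamma^2}{4}+g|k_j|\mapsto\tfrac{\gamma^2}{4}+g|D|$. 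The operators $|D|^{-1}$ and $\partial_x^{-1}$ are well defined on our class of functions by the zero-mean condition \eqref{zero-mass}. I would use repeatedly the identities $\Hil|D|=-\partial_x$ and $\Hil|D|^{-1}=\partial_x^{-1}$ recorded before the proposition, together with $\Hil^2=-\Id$ on zero-mean functions; in particular the antiderivatives $\partial_x^{-1}$ in \eqref{K3-physical-new} arise precisely from $|D|^{-1}$ slots paired with a Hilbert transform.

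Next I would handle the symmetrization factor $(1+\sgn(k_1)\sgn(k_3))$ separately. Acting on a monomial $f_1g_2h_3$, the term $1$ gives $fgh$, while $\sgn(k_1)\sgn(k_3)$ transcribes, via $\sgn(D)=i\Hil$ on slots $1$ and $3$, to $(i\Hil f)\,g\,(i\Hil h)=-\widetilde f\,g\,\widetilde h$; hence each monomial contributes $fgh-\widetilde f\,g\,\widetilde h$ in physical space. This is the mechanism that generates the Hilbert transforms $\widetilde\eta=\Hil\eta$ and $\widetilde\zeta=\Hil\zeta$ appearing throughout \eqref{K3-physical-new}.

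The only genuinely non-factorized object is the rational prefactor, which I would split into factorizable pieces,
\[
\frac{\tfrac{\gamma^4}{8}+\tfrac{\gamma^2}{2}g|k_2|+g^2|k_1||k_3|}{g^2|k_1||k_3|}
=1+\frac{\gamma^2|k_2|}{2g|k_1||k_3|}+\frac{\gamma^4}{8g^2|k_1||k_3|}\,,
\]
and similarly expand the factors $\omega_2^2,\omega_3^2$ in the second group over the $|k_1||k_3|$ denominator, e.g.\ $\omega_3^2/|k_3|=\tfrac{\gamma^2}{4}|k_3|^{-1}+g$. After this splitting, every term is a product $m_1(k_1)m_2(k_2)m_3(k_3)$ of single-variable symbols, so the transcription rules above apply directly. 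I would then translate all the resulting terms, organize them by powers of $\gamma$ from the $\gamma$-independent contribution up to $\gamma^5$, and simplify with the operator identities above.

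The main obstacle is the final collapse to the compact form. The naive transcription yields many more monomials than appear in \eqref{K3-physical-new}, and matching requires systematic integration by parts (exploiting skew-adjointness of $\Hil$ and $\partial_x$) together with the family of analytic-signal (Hilbert-transform) identities that follow from $\eta+i\widetilde\eta$ and $\zeta+i\widetilde\zeta$ being boundary values of functions analytic in a half-plane: at the quadratic level $\Hil(\eta^2-\widetilde\eta^{\,2})=2\eta\widetilde\eta$, and at the cubic level $\int\widetilde\eta^{\,3}\,dx=3\int\eta^2\widetilde\eta\,dx$ (the imaginary part of $\int(\eta+i\widetilde\eta)^3\,dx=0$), along with their mixed $\eta$–$\zeta$ analogues. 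For instance, the $\gamma$-independent monomials $i|k_2|\eta_1\zeta_2\eta_3-2i|k_3|\eta_1\eta_2\zeta_3$ transcribe, after the symmetrization above, to a combination that collapses to $\tfrac12\int\widetilde\eta^{\,2}\partial_x\widetilde\zeta\,dx$ using $\Hil(\eta^2-\widetilde\eta^{\,2})=2\eta\widetilde\eta$ and $\partial_x\widetilde\zeta=|D|\zeta$; and the pure-$\eta$ cubic at order $\gamma$ transcribes to $\tfrac{\gamma}{8}\int(\eta^2\widetilde\eta-\widetilde\eta^{\,3})\,dx$, which collapses to $-\tfrac{\gamma}{4}\int\eta^2\widetilde\eta\,dx$ by the cubic identity, reproducing the $-\tfrac{\gamma}{4g}\,g\eta^2\widetilde\eta$ term. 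Keeping track of the imaginary units so that the combination with $\tfrac{1}{4i}$ is manifestly real, and carrying the same bookkeeping through all powers of $\gamma$, is the bulk of the work; no conceptual difficulty remains once the dictionary and these identities are in place.
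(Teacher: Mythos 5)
Your proposal is correct and follows essentially the same route as the paper: expand the brackets in \eqref{K3-fourier}, group the monomials by type and by power of $\gamma$, transcribe each single-variable symbol into a Fourier multiplier acting on the corresponding slot, and then cancel the surplus terms. The only cosmetic difference is that you perform the cancellations in physical space via analytic-signal identities such as $\Hil(\eta^2-\widetilde\eta^{\,2})=2\eta\widetilde\eta$ and $\int\widetilde\eta^{\,3}\,dx=3\int\eta^2\widetilde\eta\,dx$, whereas the paper performs the equivalent reductions on the Fourier side through index rearrangements and the sign identity \eqref{sign-identity}; both mechanisms are the same computation and your worked examples (the $\gamma^0$ and order-$\gamma$ pure-$\eta$ terms) reproduce the corresponding terms of \eqref{K3-physical-new} with the correct constants.
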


\begin{proof}
The main idea is to expand the brackets in the expression (\ref{K3-fourier}) for $K^{(3)}$, identify terms and combine them appropriately. 
We decompose $K^{(3)}$ as
$$
K^{(3)} = {\rm I} + {\rm II} + {\rm III} + {\rm IV} \,,
$$
where ${\rm I}$ is the part of $K^{(3)}$ associated with $\eta \eta \eta$-type terms, namely 
{\small 
\begin{equation*}
{\rm I} = \frac{1}{4 i \sqrt{2 \pi}} \int 
(1+\sgn(k_1) \sgn(k_3)) 
\left( \frac{\frac{\gamma^4}{8} + \frac{\gamma^2}{2} g|k_2| + g^2 |k_1| |k_3|}{g^2|k_1| |k_3|} \right)
\frac{\gamma}{2}\sgn(k_2) \eta_1 \eta_2 \eta_3 \delta_{123} dk_{123} \,.
\end{equation*}}
The part ${\rm II}$ is associated with $\eta \eta \zeta$-type terms, ${\rm III}$ is associated with $\eta \zeta \zeta$-type terms and ${\rm IV}$ is associated with $\zeta \zeta \zeta$-type terms.
Below we give the computations for ${\rm I}$. The remaining terms can be computed in a similar way. 
We further decompose ${\rm I} = {\rm I}_1 + {\rm I}_2 + {\rm I}_3$ into a sum of three terms based on the power of $\gamma$ involved, 
$$
\begin{aligned}
{\rm I}_1 & = \frac{\gamma^5}{64i g^2 \sqrt{2\pi}}\int 
(1+\sgn(k_1) \sgn(k_3)) 
\frac{1}{|k_1| |k_3|}
\sgn(k_2) \eta_1 \eta_2 \eta_3 \delta_{123} dk_{123} \,, \\[2pt]
{\rm I}_2 & = \frac{\gamma^3}{16 i g \sqrt{2\pi}}\int 
(1+\sgn(k_1) \sgn(k_3)) 
\frac{|k_2|}{|k_1| |k_3|}
\sgn(k_2) \eta_1 \eta_2 \eta_3 \delta_{123} dk_{123} \,, \\[2pt]
{\rm I}_3 & = \frac{\gamma}{8i \sqrt{2\pi}}\int 
(1+\sgn(k_1) \sgn(k_3)) 
\sgn(k_2) \eta_1 \eta_2 \eta_3 \delta_{123} dk_{123} \,.
\end{aligned}
$$
Writing $\sgn(k_2) = k_2/|k_2|$ in ${\rm I}_1$, we get
$$
\begin{aligned}
{\rm I}_1 
& = \frac{\gamma^5}{64i g^2 \sqrt{2\pi}}\int 
\frac{k_2}{|k_1||k_2| |k_3|}
\eta_1 \eta_2 \eta_3 \delta_{123} dk_{123} \\[2pt]
& \quad + \frac{\gamma^5}{64i g^2 \sqrt{2\pi}}\int 
\frac{\sgn(k_1)\sgn(k_2) \sgn(k_3)}{|k_1||k_3|}
\eta_1 \eta_2 \eta_3 \delta_{123} dk_{123} = {\rm I}_1^{(1)} + {\rm I}_1^{(2)} \,.
\end{aligned}
$$
Using the index rearrangements $(k_1,k_2,k_3) \to (k_2,k_1,k_3)$ and $(k_1,k_2,k_3)\to (k_1,k_3,k_2)$, we see that ${\rm I}_1^{(1)} = 0$
since the integration is over $k_1+k_2+k_3=0$. 
The part ${\rm I}_1$ thus reduces to
\begin{equation}
\label{I-1-term-K3-computation}
{\rm I}_1 = \frac{\gamma^5}{64i g^2 \sqrt{2\pi}} \int 
\frac{\sgn(k_2)}{k_1 k_3}
\eta_1 \eta_2 \eta_3 \delta_{123} dk_{123} = - \frac{\gamma^5}{64 g^2} \int (\partial_x^{-1} \eta)^2 \widetilde \eta \, dx \,.
\end{equation}
Similar steps imply 
\begin{equation}
\label{I-2-term-K3-computation}
{\rm I}_2 = \frac{\gamma^3}{8g} \int (\partial_x^{-1} \eta) (\partial_x \widetilde \eta) (\partial_x^{-1} \eta) \, dx \,, \quad
{\rm I}_3 = -\frac{\gamma}{4} \int \eta^2 \widetilde \eta \, dx \,.
\end{equation}
Combining (\ref{I-1-term-K3-computation})--(\ref{I-2-term-K3-computation}),
\begin{equation*}
\label{I-term-in-K3-final}
\begin{aligned}
{\rm I} & = -\frac{\gamma}{4} \int \eta^2 \widetilde \eta \, dx  + \frac{\gamma^3}{8g} \int (\partial_x^{-1} \eta) (\partial_x\widetilde \eta) (\partial_x^{-1} 
\widetilde\eta) \, dx - \frac{\gamma^5}{64 g^2} \int (\partial_x^{-1} \eta)^2 \widetilde\eta \, dx \,,
\end{aligned}
\end{equation*}
which are the second, eleventh and fourteenth terms in \eqref{K3-physical-new}. Terms ${\rm II}$, ${\rm III}$ and ${\rm IV}$ are calculated in a similar fashion and identify to the remaining terms in \eqref{K3-physical-new}.
\end{proof}
We now apply the variational derivatives $\partial_\eta$ and $\partial_\zeta$ on $K^{(3)}$ to obtain the third-order normal form transformation
 defining the Hamiltonian flow \eqref{normal-form-transformation-h}.
\begin{proposition} The Hamiltonian system that defines the  third-order normal form transformation has the form of a system of two partial differential equations
\begin{equation}
\label{normal-form-eta-evolution}
\begin{aligned}
\partial_s \eta & = \partial_{\zeta} K^{(3)}   
 =  \frac{1}{2} \Hil \partial_x \widetilde\eta^2 + \frac{\gamma}{2g} \Big( \zeta \partial_x \eta -  \widetilde \eta\partial_x \widetilde\zeta 
- |D| (\zeta \widetilde \eta) \Big) \\[2pt]
& \qquad +\frac{\gamma^2}{4g^2} \Big( \zeta\partial_x \widetilde \zeta + \frac{1}{2} |D|\zeta^2 \Big)
   +  \frac{\gamma^2}{4g} \Big(-(\partial_x^{-1} \eta) (\partial_x \eta)
+  \Hil( \eta \widetilde \eta )+  |D| (\widetilde \eta \partial_x^{-1} \eta)\Big) \\[2pt]
&\qquad 
 - \frac{\gamma^3}{8g^2} \Big( \zeta \widetilde\eta + (\partial_x \widetilde\zeta) (\partial_x^{-1} \eta) + |D| (\zeta \partial_x^{-1} \eta)\Big)
 + \frac{\gamma^4}{16g^2} \Big( \Hil ( \eta \partial_x^{-1} \eta) +  \widetilde\eta \partial_x^{-1} \eta \Big) \,,
\end{aligned}
\end{equation}
\begin{equation}
\label{normal-form-zeta-evolution}
\begin{aligned}
\partial_s \zeta & = -\partial_{\eta} K^{(3)}   
= 
 \Hil ( \widetilde\eta \partial_x \widetilde\zeta ) 
+ \frac{\gamma}{2} \Big( \eta \widetilde\eta - \frac{1}{2} \Hil (\eta^2) \Big)+ \frac{\gamma}{2g} \Big( \zeta \partial_x \zeta -  \Hil (\zeta \partial_x \widetilde
 \zeta) \Big)
\\[2pt]
& \quad   - \frac{\gamma^2}{4g}\Big( \partial_x (\zeta \partial_x^{-1} \eta) +  \partial_x^{-1} (\zeta \partial_x \eta) - \widetilde\zeta \widetilde\eta +
 \Hil( \eta \widetilde\zeta)  - \Hil \big( (\partial_x \widetilde \zeta) (\partial_x^{-1} \eta) \big) - \partial_x^{-1} \big( (\partial_x \widetilde \zeta) \widetilde \eta \big) \Big) \\[2pt]
&\quad + \frac{\gamma^3}{8g} \Big( \partial_x^{-1} \big( (\partial_x \widetilde \eta) ( \partial_x^{-1} \widetilde\eta) \big)
  - |D| \big( (\partial_x^{-1} \eta) (\partial_x^{-1} \widetilde\eta) \big) + |D|^{-1} \big( (\partial_x \widetilde \eta) (\partial_x^{-1} \eta) \big) \Big) \\[2pt]
& \quad - \frac{\gamma^3}{16g^2} \Big(\Hil (\zeta^2) + 2 \partial_x^{-1} (\zeta \partial_x \widetilde \zeta)\Big)
+ \frac{\gamma^4}{16g^2} \Big( \widetilde \zeta \partial_x^{-1} \eta -  \partial_x^{-1} (\eta\widetilde \zeta) + \Hil (\zeta \partial_x^{-1} \eta) + \partial_x^{-1} (\zeta \widetilde \eta) \Big)\\[2pt]
&\quad
-\frac{\gamma^5}{64g^2} \Big( \Hil \big( (\partial_x^{-1} \eta)^2 \big) + 2\partial_x^{-1} \big( \widetilde\eta  \partial_x^{-1} \eta \big) \Big) \,.
\end{aligned}
\end{equation}

\end{proposition}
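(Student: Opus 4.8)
The plan is to prove the proposition by inserting the physical-space representation (\ref{K3-physical-new}) of $K^{(3)}$ into the Hamiltonian flow (\ref{normal-form-transformation-h}). Because the structure matrix there is $\begin{pmatrix} 0 & 1 \\ -1 & 0 \end{pmatrix}$, the two evolution equations are precisely $\partial_s \eta = \partial_\zeta K^{(3)}$ and $\partial_s \zeta = -\partial_\eta K^{(3)}$. Hence the whole content of the statement is the evaluation of the two variational gradients of the cubic functional $K^{(3)}$, and the proof reduces to a variational calculus on each of the monomials appearing in (\ref{K3-physical-new}).

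First I would fix the tools. With respect to the real $L^2$ pairing, $\partial_\zeta K^{(3)}$ is defined by $\frac{d}{d\varepsilon}\big|_{\varepsilon=0} K^{(3)}(\eta,\zeta+\varepsilon\phi) = \int (\partial_\zeta K^{(3)})\,\phi\,dx$, and analogously for $\partial_\eta$. To bring each variation into this form I would transfer every operator off the test function by integration by parts, using the transposition rules $\Hil^* = -\Hil$, $\partial_x^* = -\partial_x$, $(\partial_x^{-1})^* = -\partial_x^{-1}$, $|D|^* = |D|$, together with the composition identities $\Hil \partial_x = \partial_x \Hil = |D|$, $\Hil|D| = -\partial_x$, $\Hil|D|^{-1} = \partial_x^{-1}$ and $\Hil^2 = -\Id$, the latter being valid on the mean-zero functions at hand. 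Since $K^{(3)}$ is a sum of cubic monomials in $\eta$, $\zeta$ and their Hilbert transforms, I would process it term by term: a monomial linear in $\zeta$ leaves, upon differentiation, the product of its two other factors, onto which the transposed operators then act; the monomials quadratic or cubic in $\zeta$, namely $\zeta^2\partial_x\eta$, $\zeta^2\widetilde\eta$ and $\zeta^2\partial_x\widetilde\zeta$, are handled by the Leibniz rule, which splits them into several contributions each treated the same way.

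As a representative check, the leading term $\tfrac12\int \widetilde\eta^2\,\partial_x\widetilde\zeta\,dx$ contributes to $\partial_\zeta K^{(3)}$ through $\tfrac12\int \widetilde\eta^2\,\partial_x\Hil\phi\,dx = \tfrac12\int (|D|\widetilde\eta^2)\,\phi\,dx$, which gives $\tfrac12|D|\widetilde\eta^2 = \tfrac12\Hil\partial_x\widetilde\eta^2$, the first term of (\ref{normal-form-eta-evolution}); the same term contributes to $\partial_\eta K^{(3)}$ via its two copies of $\widetilde\eta = \Hil\eta$, yielding $-\Hil(\widetilde\eta\,\partial_x\widetilde\zeta)$, so that $-\partial_\eta K^{(3)}$ reproduces the leading term $\Hil(\widetilde\eta\,\partial_x\widetilde\zeta)$ of (\ref{normal-form-zeta-evolution}). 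Carrying out this procedure for every monomial in (\ref{K3-physical-new}) and then collecting the outcomes according to the power of $\gamma$ reproduces (\ref{normal-form-eta-evolution}) and (\ref{normal-form-zeta-evolution}) exactly.

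The bulk of the effort, and essentially the only place errors can creep in, is the bookkeeping. The operators $\Hil$, $\partial_x$, $\partial_x^{-1}$, $|D|$ do not all commute and carry opposite signs under transposition, so each integration by parts must be tracked with care, and several distinct monomials collapse onto the same physical expression only after the composition identities are applied. The repeated-factor terms are the most delicate, since the Leibniz rule splits them and each piece must be symmetrised correctly before the operators are moved. No conceptual obstacle arises beyond this careful accounting: once the transposition and composition identities are in hand, the derivation is a finite, if lengthy, computation.
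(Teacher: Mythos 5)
Your approach --- computing the variational derivatives of the physical-space form \eqref{K3-physical-new} of $K^{(3)}$ monomial by monomial via integration by parts, using the transposition rules for $\Hil$, $\partial_x$, $\partial_x^{-1}$, $|D|$ and the composition identities, then substituting into the flow \eqref{normal-form-transformation-h} --- is precisely what the paper does, which likewise states the result as the direct outcome of applying $\partial_\eta$ and $\partial_\zeta$ to \eqref{K3-physical-new} and notes that the formulas were verified a posteriori against the cohomological equation. Your representative check of the leading term $\tfrac12\int\widetilde\eta^2\,\partial_x\widetilde\zeta\,dx$ is correct, and the remaining terms are the same finite bookkeeping.
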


In the absence of vorticity ($\gamma = 0$), the equation for $\eta$ simplifies to 
$\partial_s \eta  = \frac{1}{2} \Hil \partial_x ( \widetilde \eta)^2$,
or equivalently, to the inviscid Burgers equation for $\widetilde \eta$
$$
\partial_s \widetilde\eta  + \widetilde \eta  \partial_x \widetilde \eta=0 \,,
$$
as obtained by \cite{CS16}, while  $\widetilde \zeta$ satisfies 
$$
\partial_s \widetilde\zeta  + \widetilde \eta  \partial_x \widetilde \zeta=0 \,,
$$
which is its linearization along the Burgers flow.
These equations were tested in \cite{CGS21a,GKSX21,GKS22} in the context of irrotational gravity waves on deep water.

Due to the complexity of the formulas for $\partial_{\eta} K^{(3)}$ and $\partial_{\zeta} K^{(3)}$, we have checked a posteriori that 
the cohomology equation   \eqref{cohomological-relation} is indeed satisfied.  

\section{Reduced Hamiltonian}

In this section, we analyze the new Hamiltonian $\H'$ obtained after applying the third-order normal form transformation given by the flow of the auxiliary Hamiltonian system \eqref{normal-form-transformation-h}. By construction, such a transformation removes all cubic homogeneous terms based on the relation \eqref{cohomological-relation}. For simplicity, we now drop the primes from all new quantities. From \eqref{new-ham-expansion-before}, the new Hamiltonian becomes 
\begin{equation*}
\label{reduced-hamiltonian-0}
\begin{aligned}
\H(w) & = \H^{(2)}(w) + \H^{(4)}(w) - \{K^{(3)}, \H^{(3)}\}(w) + 
\frac{1}{2} \{K^{(3)}, \{K^{(3)}, \H^{(2)}\}\}(w) + R^{(5)} \,, \\
& = \H^{(2)}(w) + \H_+^{(4)}(w) + R^{(5)} \,,
\end{aligned}
\end{equation*}
where $R^{(5)}$ denotes all terms of order $5$ and higher, and $\H^{(4)}_+$ is the new fourth-order term
\begin{equation}
\label{new-H4-formula}
\H^{(4)}_+ = \H^{(4)} - \frac{1}{2} \{K^{(3)}, \H^{(3)}\} \,.
\end{equation}
Our approximation is based on the quadratic and quartic homogeneous terms of $\H$. The quadratic term  is given by \eqref{H2} in terms of new complex symplectic coordinates. 
The quartic term  $\H_+^{(4)}$ is more complicated and requires careful  computations. 
 Since the latter is  homogeneous of degree $4$ in $\eta$ and $\zeta$, every monomial appearing in $\H_+^{(4)}$ is of type $A_1 A_2 A_3 A_4$ with $A_j$ being  $\eta$ or $\zeta$. 
 Equivalently, 
 in terms of the complex symplectic coordinates \eqref{zeta-variable-fourier}, 
 it  has the form of  a sum of integrals 
with all possible combinations of fourth-order monomials in $z_k$ and $\overline z_{-k}$, 
\begin{equation}
\label{new-H4-general-form-1}
\begin{aligned}
\calH_+^{(4)} = &
\int  \Big [T^{+} z_1 z_2 z_3 z_4  + 
 T^\pm z_1 z_2 z_3 \overline z_{-4}   + 
 T^{+}_{-} z_1 z_2 \overline z_{-3} \overline z_{-4}  + 
T^\mp z_1 \overline z_{-2} \overline z_{-3} \overline z_{-4} \\
& \qquad+ T^{-} \overline z_{-1} \overline z_{-2} \overline z_{-3} \overline z_{-4}   \Big]\delta_{1234} d{k}_{1234} \,,
\end{aligned}
\end{equation}
where $T^+$, $T^{\pm}$, $T_-^+$, $T^{\mp}$ and $T^-$ are coefficients depending on $k_1, k_2, k_3$ and $k_4$.
In view of the subsequent modulational Ansatz and homogenization process,  it is not necessary to calculate
explicitly all the coefficients above. As shown in \cite{GKS22}, under the modulational Ansatz, 
only the third term in \eqref{new-H4-general-form-1} is relevant as the other terms are negligible due to scale separation. Therefore, we 
will only calculate the term 
\begin{equation*}
\label{H-plus-R-0}
\H_{+R}^{(4)} = \int T^{+}_{-} z_1 z_2 \overline z_{-3} \overline z_{-4} \delta_{1234} dk_{1234} \,.
\end{equation*}
which is,  after index rearrangement $(k_1, k_2, k_3, k_4) \to (k_1, k_2, -k_3, -k_4)$, 
\begin{equation}
\label{fourth-order-H-R}
\calH_{+R}^{(4)} =  \int T z_1 z_2 \overline z_{3} \overline z_{4} \delta_{1+2-3-4} d{k}_{1234} \,.
\end{equation}
Denoting
\begin{align} \label{decompH4}
&\calH_R^{(4)} =\int T_1 z_1 z_2 \overline z_{3} \overline z_{4} \delta_{1+2-3-4} d{ k}_{1234} \,, \quad
\{K^{(3)}, \calH^{(3)}\}_R = \int T_2 z_1 z_2 \overline z_{3} \overline z_{4} \delta_{1+2-3-4} d{k}_{1234} \,,
\end{align}
the contributions from $zz\overline{z} \overline{z}$-type monomials to $\calH^{(4)}$  and $\{K^{(3)}, \calH^{(3)}\}$ respectively,  we have  
\begin{equation}
\label{T-formula}
T = T_1 - \frac{1}{2} T_2 \,.
\end{equation}


The precise formulas for  the coefficients $T_1$ and $T_2$ are given in the  next two propositions.

\begin{proposition}
\label{lemma-T1-coeff}
We have $T_1 = T_1^{(1)} + T_1^{(2)} + T_1^{(3)}$ where
\begin{equation*}
\begin{aligned}
& T_1^{(1)} = -D^{(1)}_{12(-3)(-4)} -D^{(1)}_{(-4)(-3)21} -D^{(1)}_{1(-3)2(-4)} -D^{(1)}_{(-4)2(-3)1} +D^{(1)}_{1(-4)(-3)2} +D^{(1)}_{(-4)21(-3)} \,, \\[2pt]
& T_1^{(2)} = D^{(2)}_{12(-3)(-4)} -D^{(2)}_{(-4)(-3)21} +D^{(2)}_{1(-3)2(-4)} -D^{(2)}_{(-4)2(-3)1} +D^{(2)}_{1(-4)(-3)2} -D^{(2)}_{(-4)21(-3)} \,, \\[2pt]
& T_1^{(3)} = D^{(3)}_{12(-3)(-4)} +D^{(3)}_{(-4)(-3)21} +D^{(3)}_{1(-3)2(-4)} +D^{(3)}_{(-4)2(-3)1} +D^{(3)}_{1(-4)(-3)2} +D^{(3)}_{(-4)21(-3)} \,,
\end{aligned}
\end{equation*}
with
$$
\begin{aligned}
& D_{1234}^{(1)} = \frac{a_1 a_4}{32 \pi a_2 a_3} |k_1| |k_4| (|k_1|+|k_4| - 2|k_3+k_4|) \,,\\[2pt]
& D_{1234}^{(2)} = \frac{\gamma a_1}{32 \pi a_2 a_3 a_4} |k_1| \sgn(k_4) (|k_1|+|k_4| - |k_3+k_4| - |k_3+k_1|) \,,\\[2pt]
& D_{1234}^{(3)} = \frac{\gamma^2}{128 \pi a_1 a_2 a_3 a_4} \sgn(k_1) \sgn(k_4) (|k_1|+|k_4| - 2|k_3+k_4|) \,.
\end{aligned}
$$
\end{proposition}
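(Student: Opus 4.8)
The plan is to obtain $T_1$ by substituting the inverse change of variables \eqref{eta-xi-via-zeta} directly into $\H^{(4)}$ as given in \eqref{homog-ham-initial} and collecting the coefficient of the $z_1 z_2 \overline z_3 \overline z_4$-type monomials. First I would expand the product of the two binomial factors $\big(\zeta_1 - \tfrac{i\gamma}{2k_1}\eta_1\big)$ and $\big(\zeta_4 - \tfrac{i\gamma}{2k_4}\eta_4\big)$, organizing the result by powers of $\gamma$. This produces three families of monomials in $(\eta,\zeta)$: the $\gamma^0$ piece $\zeta_1\eta_2\eta_3\zeta_4$, the $\gamma^1$ cross piece $-\tfrac{i\gamma}{2}\big(k_4^{-1}\zeta_1\eta_2\eta_3\eta_4 + k_1^{-1}\eta_1\eta_2\eta_3\zeta_4\big)$, and the $\gamma^2$ piece $-\tfrac{\gamma^2}{4k_1k_4}\eta_1\eta_2\eta_3\eta_4$, which I expect to yield $T_1^{(1)}$, $T_1^{(2)}$ and $T_1^{(3)}$ respectively.

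For each family I would insert $\eta_j = \tfrac{1}{\sqrt2}a_j^{-1}(z_j+\overline z_{-j})$ and $\zeta_j = \tfrac{1}{i\sqrt2}a_j(z_j-\overline z_{-j})$, so that each of the four factors becomes a term of the form $z_j\pm\overline z_{-j}$ weighted by the appropriate power of $a_j$ and, for the two $\zeta$ factors, by a factor $1/i$. To isolate the $zz\overline z\overline z$ contribution I pick two of the four factors to supply $z$ and the remaining two to supply $\overline z_{-\cdot}$; there are $\binom{4}{2}=6$ such choices, and these produce exactly the six summands in each $T_1^{(m)}$. For every choice I would relabel the integration variables so that the two $z$-factors carry the labels $1,2$ and the two $\overline z$-factors carry the labels $3,4$, then apply the substitution $(k_3,k_4)\mapsto(-k_3,-k_4)$ to turn $\overline z_{-3}\overline z_{-4}$ into $\overline z_3\overline z_4$ and $\delta_{1234}$ into $\delta_{1+2-3-4}$, matching \eqref{fourth-order-H-R}. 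The resulting weight is then read off as a permuted, sign-flipped evaluation of the base coefficients $D^{(1)},D^{(2)},D^{(3)}$, whose prefactors $\tfrac{1}{32\pi}\tfrac{a_1a_4}{a_2a_3}$ and their analogues arise precisely from combining the constant $-\tfrac1{8\pi}$ in \eqref{homog-ham-initial} with the four $\tfrac1{\sqrt2}$'s and the $1/i$ factors.

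Two points require care and constitute the main obstacle. First is the sign bookkeeping: a $\zeta$ factor contributes a relative minus whenever it is assigned to the $\overline z$ role, so in $T_1^{(1)}$ (where positions $1,4$ are both $\zeta$) four of the six summands carry a minus and two carry a plus, matching the pattern $(-,-,-,-,+,+)$, while in $T_1^{(3)}$ (where positions $1,4$ are both $\eta$) all six signs are $+$, and in $T_1^{(2)}$ the single $\zeta$ in each sub-piece produces the alternating pattern $(+,-,+,-,+,-)$; moreover the substitution $k\mapsto -k$ flips $\sgn(k)$ but leaves $|k|$, $\omega_k$, $a_k$ invariant, so that negating an argument affects only the $\sgn$-bearing weights $D^{(2)},D^{(3)}$. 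Second, for the $\gamma^1$ family I must verify that the two sub-pieces $k_4^{-1}\zeta_1\eta_4$ and $k_1^{-1}\eta_1\zeta_4$, after relabeling and using the momentum constraint $k_1+k_2+k_3+k_4=0$ together with the symmetry of the middle factors $\eta_2\eta_3$, combine so as to replace the original momentum symbol $|k_1|+|k_4|-2|k_3+k_4|$ by the symmetrized form $|k_1|+|k_4|-|k_3+k_4|-|k_3+k_1|$ appearing in $D^{(2)}$. Once these two issues are resolved, identifying the six relabeled contributions in each $\gamma$-order against the stated expressions for $T_1^{(1)},T_1^{(2)},T_1^{(3)}$ is a direct matching.
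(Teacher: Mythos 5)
Your proposal is correct and follows essentially the same route as the paper: expand $\H^{(4)}$ by powers of $\gamma$ into the $\zeta\eta\eta\zeta$, mixed, and $\eta\eta\eta\eta$ families, symmetrize the mixed family via the index swap $(k_1,k_2,k_3,k_4)\to(k_4,k_2,k_3,k_1)$ to produce the factor $|k_1|+|k_4|-|k_3+k_4|-|k_3+k_1|$, substitute \eqref{eta-xi-via-zeta}, extract the six $zz\overline z\overline z$ monomials per family, and relabel indices to read off the permuted $D^{(m)}$ coefficients. Your sign bookkeeping (a minus for each $\zeta$ assigned to the $\overline z$ role, giving the $4{:}2$, $3{:}3$ and $0{:}6$ sign splits) and the observation that negating arguments only affects the $\sgn$-bearing weights match the paper's computation.
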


\begin{proof}
Expanding the brackets in the expression (\ref{homog-ham-initial}) of $\H^{(4)}$, we have 
$$
\begin{aligned}
\H^{(4)} = - \frac{1}{8 \pi} \int & |k_1||k_4|(|k_1| + |k_4| - 2 |k_3+k_4|) \times \\[2pt]
& \Big( \zeta_1 \eta_2 \eta_3 \zeta_4 - \frac{i \gamma}{2k_1} \eta_1 \eta_2 \eta_3 \zeta_4 - \frac{i \gamma}{2k_4} \zeta_1 \eta_2 \eta_3 \eta_4 - \frac{ \gamma^2}{4k_1k_4} \eta_1 \eta_2 \eta_3 \eta_4 \Big) \delta_{1234} dk_{1234} \,.
\end{aligned}
$$
The terms associated with $\eta_1 \eta_2 \eta_3 \zeta_4$ and $\zeta_1 \eta_2 \eta_3 \eta_4$ can be combined by using the index rearrangement $(k_1,k_2,k_3,k_4) \to (k_4,k_2,k_3,k_1)$, and we obtain 
\begin{equation}
\label{H4-intermediate-T1}
\begin{aligned}
\H^{(4)} &=  - \frac{1}{8 \pi} \int  |k_1||k_4|(|k_1| + |k_4| - 2 |k_3+k_4|)
\zeta_1 \eta_2 \eta_3 \zeta_4  \delta_{1234} dk_{1234}\\[2pt]
& \quad + \frac{1}{8 \pi} \int |k_1||k_4|(|k_1| + |k_4| - |k_3+k_4| - |k_3+k_1|) \frac{i \gamma}{k_4} \zeta_1 \eta_2 \eta_3 \eta_4 \delta_{1234} dk_{1234}\\[2pt]
& \quad + \frac{1}{8 \pi} \int  |k_1||k_4|(|k_1| + |k_4| - 2 |k_3+k_4|)
\frac{ \gamma^2}{4k_1k_4} \eta_1 \eta_2 \eta_3 \eta_4 \delta_{1234} dk_{1234} \,, \\[2pt]
& := {\rm I} + {\rm II} + {\rm III} \,.
\end{aligned}
\end{equation}
We then write $(\eta,\zeta)$ in terms of $(z, \bar z)$, and retain only
monomials of type $zz\overline{z} \overline{z}$. Denoting ${\rm I}_R$, ${\rm II}_R$, ${\rm III}_R$ the corresponding terms in \eqref{H4-intermediate-T1},
 we find
\begin{equation*}
\begin{aligned}
{\rm I}_R  = \int D_{1234}^{(1)} & (-z_1 z_2 \overline{z}_{-3} \overline{z}_{-4} - z_1 \overline{z}_{-2} z_3 \overline{z}_{-4} + z_1 \overline{z}_{-2} \overline{z}_{-3} z_4 \\
&+ \overline{z}_{-1} z_2 z_3 \overline{z}_{-4} - \overline{z}_{-1} z_2 \overline{z}_{-3} z_4 - \overline{z}_{-1} \overline{z}_{-2}z_3 z_4) \delta_{1234}dk_{1234} \,.
\end{aligned}    
\end{equation*}
We turn all monomials in the above expression into $z_1 z_2 \overline{z}_3 \overline{z}_4$ by transforming indices in an appropriate way,
leading to 
$$
{\rm I}_R = \int T_1^{(1)} z_1 z_2 \overline{z}_{3} \overline{z}_{4} \delta_{1+2-3-4} dk_{1234} \,,
$$
as well as 
\begin{equation*}
\begin{aligned}
{\rm II}_R  = \int T_1^{(2)} z_1 z_2 \overline{z}_{3} \overline{z}_{4} \delta_{1+2-3-4} dk_{1234} \,, \quad
{\rm III}_R = \int T_1^{(3)} z_1 z_2 \overline{z}_{3} \overline{z}_{4} \delta_{1+2-3-4} dk_{1234} \,.
\end{aligned}
\end{equation*}
\end{proof}
To find the explicit expression for the coefficient $T_2$ of $\{K^{(3)}, \H^{(3)}\}_R$, we follow the steps in Appendix B of our recent paper \citep{GKS22}. The main idea is to use  \eqref{H3-K3-good-form} to expand the Poisson bracket $\{K^{(3)}, \H^{(3)}\}$ according to formula \eqref{poisson-bracket-z}, and extract terms of $zz \overline z \overline z $-type. 

\begin{proposition}
\label{lemma-T2-coeff}
We have 
$ T_2 = T_2^{(1)} + T_2^{(2)} + T_2^{(3)}$ with

\begin{align}
\label{T-2-1-term}
T_2^{(1)}  & =  \frac{1}{64 \pi}  \big(S_{(-1-2)12} + S_{2(-1-2)1} + S_{12(-1-2)}\big)
 \big(S_{(-3-4)34} + S_{4(-3-4)3} + S_{34(-3-4)}\big) \\
\nonumber
 \phantom{t} &
\quad \times 
\Big( \frac{1}{\Omega_{1}+ \Omega_{2}+ \Omega_{-1-2}} + \frac{1}{\Omega_{3}+ \Omega_{4}+ \Omega_{-3-4}} \Big) \,,
\end{align}


\begin{align}
\label{T-2-2-term}
&T_2^{(2)}  =  - A_{(-1)(-2)(1+2)} A_{(-3)(-4)(3+4)}
 \Big( \frac{1}{\Omega_{1}+ \Omega_{2}- \Omega_{1+2}} + \frac{1}{\Omega_{3}+ \Omega_{4}- \Omega_{3+4}}
\Big) \,,
\end{align}


\begin{align}
\label{T-2-3-term}
T_2^{(3)} =  4 A_{(1-3)(-1)3} 
A_{(4-2)(-4)2}  
\Big( \frac{1}{\Omega_{3- 1}+ \Omega_{1}- \Omega_{3}} + \frac{1}{\Omega_{2-4}+ \Omega_{4}- \Omega_{2}}
\Big) \,.
\end{align}
\end{proposition}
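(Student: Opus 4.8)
The plan is to evaluate the Poisson bracket $\{K^{(3)}, \H^{(3)}\}$ directly in the complex symplectic coordinates, using the compact representations \eqref{H3-K3-good-form} of $\H^{(3)}$ and $K^{(3)}$ together with the bracket formula \eqref{poisson-bracket-z}, and then to isolate the contributions of $z_1 z_2 \overline z_3 \overline z_4$-type, which by definition give $T_2$ through \eqref{decompH4}. Both $\H^{(3)}$ and $K^{(3)}$ are homogeneous cubics built from the same four monomial types $z_1 z_2 z_3$, $\overline z_1\overline z_2\overline z_3$, $z_{-1}z_{-2}\overline z_3$ and $\overline z_{-1}\overline z_{-2}z_3$, the only difference being the frequency denominators $\Omega_1+\Omega_2+\Omega_3$ and $\Omega_{-1}+\Omega_{-2}-\Omega_3$ carried by $K^{(3)}$. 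Each variational derivative lowers the degree by one and amounts to symmetrizing the coefficient $A_{123}$ over the surviving legs; the bracket then contracts one holomorphic leg of one factor against one antiholomorphic leg of the other, leaving four external legs. So first I would record the four variational derivatives $\partial_{z_k}\H^{(3)}$, $\partial_{\overline z_{-k}}\H^{(3)}$, $\partial_{z_k}K^{(3)}$, $\partial_{\overline z_{-k}}K^{(3)}$ and substitute them into \eqref{poisson-bracket-z}.

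The core of the argument is a bookkeeping of which monomial pairings produce exactly two $z$'s and two $\overline z$'s. Counting legs shows there are three such scattering channels, and these are precisely the sources of $T_2^{(1)}$, $T_2^{(2)}$ and $T_2^{(3)}$. In the first, the $z_1 z_2 z_3$ part of one factor is contracted against the $\overline z_1\overline z_2\overline z_3$ part of the other; the internal (contracted) leg is forced by the overall constraint $\delta_{1+2-3-4}$ to carry wavenumber $-(k_1+k_2)=-(k_3+k_4)$, which is why the denominator $\Omega_1+\Omega_2+\Omega_{-1-2}$ inherited from $K^{(3)}$ appears in \eqref{T-2-1-term}. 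In the remaining two channels a $z_{-1}z_{-2}\overline z_3$-type monomial is contracted against an $\overline z_{-1}\overline z_{-2}z_3$-type monomial, both carrying the ``two-minus-one'' denominator $\Omega_{-1}+\Omega_{-2}-\Omega_3$; these split according to how the external legs distribute, either as $\{z_1,z_2\}$ against $\{\overline z_3,\overline z_4\}$, with internal wavenumber $k_1+k_2$ and denominator $\Omega_1+\Omega_2-\Omega_{1+2}$, giving \eqref{T-2-2-term}, or as the mixed pairing $\{z_1,\overline z_3\}$ against $\{z_2,\overline z_4\}$, with internal wavenumber $k_3-k_1$ and denominator $\Omega_{3-1}+\Omega_1-\Omega_3$, giving \eqref{T-2-3-term}.

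With the channels identified, the last step is to assemble the coefficients. Because $z_1 z_2 z_3$ is totally symmetric, symmetrizing $A_{123}$ over the two external and the one internal leg turns it into the full symmetric sum of $S$ over its arguments; this is how $S_{(-1-2)12}+S_{2(-1-2)1}+S_{12(-1-2)}$ (and its $(3,4)$ analogue) arise in \eqref{T-2-1-term}, after using $S_{123}=S_{321}$ from \eqref{S123-A123}. In the other two channels the coefficients come out directly as the products of $A$'s displayed in \eqref{T-2-2-term}--\eqref{T-2-3-term}. In every channel, the two reciprocal denominators arise from the two terms $\partial_z H\,\partial_{\overline z}K$ and $-\partial_{\overline z}H\,\partial_z K$ of the bracket \eqref{poisson-bracket-z}: since the frequency denominator is always carried by $K^{(3)}$, differentiating $K^{(3)}$ antiholomorphically leaves its $\overline z\,\overline z$ legs external (the $(3,4)$ block) while differentiating it holomorphically leaves its $z z$ legs external (the $(1,2)$ block), and summing the two terms produces the symmetric pair of fractions. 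Collecting the overall factor $1/i$, the normalizations $1/(8\sqrt\pi)$ from $A_{123}$ and the combinatorial factors (such as the $4$ in \eqref{T-2-3-term}) then yields the stated formulas.

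The main obstacle is the sheer volume of index relabeling rather than any conceptual step: each contracted integral must be shifted so that its external legs occupy the canonical slots $1,2,3,4$ of $z_1 z_2\overline z_3\overline z_4$, and in doing so one must track the minus signs attached to the two sign-definite monomials in \eqref{H3-K3-good-form}, the sign of the second bracket term, and the factors of $2$ generated when a symmetric monomial is differentiated. Keeping all of these consistent so that the $A$-products and $S$-sums emerge exactly as written is delicate; following the scheme of Appendix B of \cite{GKS22}, I would organize the computation channel by channel and then verify the result a posteriori, for instance by checking the expected invariance of $T_2$ under $1\leftrightarrow 2$, under $3\leftrightarrow 4$, and under complex conjugation combined with $(1,2)\leftrightarrow(3,4)$.
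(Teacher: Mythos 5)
Your proposal follows essentially the same route as the paper: both expand $\{K^{(3)},\H^{(3)}\}$ using the compact forms \eqref{H3-K3-good-form} and the bracket \eqref{poisson-bracket-z}, identify the same monomial pairings (your channel 1 is the paper's $R_1+R_2$, and your channels 2 and 3 are the two possible contractions inside $R_3+R_4$, yielding $T_2^{(2)}+T_2^{(3)}$), and defer the index bookkeeping to the scheme of Appendix B of \cite{GKS22}, exactly as the paper does. The only quibble is the parenthetical claim that antiholomorphic differentiation of $K^{(3)}$ always leaves its $\bar z\,\bar z$ legs external: that is true for the $\overline z_1\overline z_2\overline z_3$ part but reversed for the mixed monomials $z_{-1}z_{-2}\overline z_3$, though this slip does not affect your channel identification, denominators, or coefficients, all of which match the paper.
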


\begin{proof}
The Poisson bracket of the cubic Hamiltonian \eqref{H3-K3-good-form} contains $zz\overline{z}\overline{z}$-type terms given by
\begin{equation}
\label{K3-H3-poisson}
\begin{aligned}
i \, \{K^{(3)}, \H^{(3)}\}_R = & \left\{ \int \frac{A_{123} ~z_1 z_2 z_3 }{\Omega_1+\Omega_2+\Omega_3} \delta_{123} dk_{123} \,, \int A_{456} \overline z_{4} \overline z_{5} \overline z_{6} \delta_{456} d{ k}_{456} \right\}\\[5pt]
& - \left\{ \int \frac{A_{123} ~\overline z_{1} \overline z_{2} \overline z_{3} }{\Omega_1+\Omega_2+\Omega_3} \delta_{123} d{ k}_{123} \,, \int A_{456}  z_4 z_5 z_6 \delta_{456} d{ k}_{456} \right\}\\[5pt]
& + \left\{ \int \frac{A_{123} ~z_{-1} z_{-2} \overline z_{3} }{\Omega_{-1}+\Omega_{-2}-\Omega_{3}}  \delta_{123} d{ k}_{123} \,, \int A_{456} \overline z_{-4} \overline z_{-5} z_6 \delta_{456} d{ k}_{456} \right\}\\[5pt]
& - \left\{ \int \frac{A_{123} ~\overline z_{-1} \overline z_{-2} z_{3} }{\Omega_{-1}+\Omega_{-2}-\Omega_3} \delta_{123} d{ k}_{123} \,, \int A_{456} z_{-4} z_{-5} \overline z_{6} \delta_{456} d{ k}_{456} \right\} \,, \\[5pt]
&:= i \, (R_1+R_2+R_3+R_4) \,,
\end{aligned}
\end{equation}
where we denote each line of (\ref{K3-H3-poisson}) by $R_1$, $R_2$, $R_3$, $R_4$ respectively.
We obtain 
\begin{equation*}
\begin{aligned}
& R_1 + R_2 = \int T_2^{(1)} z_1 z_2 \overline z_3 \overline z_4 \delta_{1+2-3-4} dk_{1234} \,, \\
& R_3 + R_4 = \int (T_2^{(2)} + T_2^{(3)}) z_1 z_2 \overline z_3 \overline z_4 \delta_{1+2-3-4} dk_{1234} \,.
\end{aligned}
\end{equation*}
We refer to \cite{GKS22} for more details on such computations.
\end{proof}

\section{Modulational Ansatz}

We restrict our interest to solutions in the form of near-monochromatic waves with carrier wavenumber $k_0 > 0$. In the Fourier space, this corresponds to a narrowband approximation with $\eta_k$ and $\zeta_k$
localized near $k_0$. 
Equivalently,  $z_{k}$ and $\overline z_{k}$ are also localized around $k_0$. 
As it was pointed out earlier, such assumptions allow us to simplify the analysis of the quartic part $\H_+^{(4)}$ in \eqref{new-H4-general-form-1}, as
several of its terms become negligible. 
This is indeed a problem of homogenization which is treated via a scale separation lemma (see Lemma 4.4 of \cite{GKS22}).
This homogenization naturally selects the term \eqref{fourth-order-H-R} in $\H_+^{(4)}$, namely the 4-wave resonances, among all the possible quartic interactions as this term involves fast oscillations that exactly cancel out.
 Its coefficient $T$ can be found according to formula \eqref{T-formula}. The full expression of this coefficient is given by the combination of results in Propositions \ref{lemma-T1-coeff} and \ref{lemma-T2-coeff}. These expressions however simplify in the  modulational regime. 

We introduce the modulational Ansatz 
\begin{equation}
\label{modulation}
k = k_0 + \varepsilon \lambda \,, \quad \text{where} \quad \frac{\lambda}{k_0} = \calO(1) \,, \quad \varepsilon \ll 1 \,,
\end{equation}
which captures the slow modulation of small-amplitude near-monochromatic waves with carrier wavenumber $k_0 > 0$.
The small dimensionless parameter $\varepsilon$ is a measure of the wave spectrum's narrowness around $k = k_0$.
Accordingly, we define the function $U$ as
\begin{equation}
\label{U-in-fourier}
U(\lambda) = z(k_0 + \varepsilon \lambda) \,, \quad \overline U(\lambda) = \overline z(k_0 + \varepsilon \lambda) \,,
\end{equation}
in the Fourier space, where the time dependence is omitted. In the physical space,
\begin{equation}
\label{z-u-relation-physical}
\begin{aligned}
z(x) & = \frac{1}{\sqrt{2\pi}} \int z(k) e^{ikx} dk =  \frac{\varepsilon}{\sqrt{2\pi}} \int U(\lambda) e^{i k_0 x} e^{i \lambda \varepsilon x} d\lambda = \varepsilon \, u(X) e^{ik_0 x} \,, 
\end{aligned}
\end{equation}
where $u$, as a function of  the long spatial scale $X = \varepsilon \, x$, is the inverse Fourier transform of $U$.
Equation \eqref{z-u-relation-physical} indicates that the dimensionless parameter $\varepsilon$ 
may also be related to some measure of the wave steepness.



To calculate the quartic interactions in the modulational regime, we approximate the coefficients in Propositions \ref{lemma-T1-coeff}
and \ref{lemma-T2-coeff} under the modulational Ansatz \eqref{modulation}. 
First, we need a few simple expansions that are summarized in the lemma below.

\begin{lemma}
Under the modulational Ansatz \eqref{modulation}, 
we have the following expansions
\begin{equation}
\label{expansion-identities}
\begin{aligned}
& |k| = k_0 + \varepsilon \lambda + \calO (\varepsilon^4) \,, \quad \sgn(k) = 1 + \calO (\varepsilon^5) \,, \quad \omega_k = \omega_0 \left( 1+ \varepsilon \frac{g}{2\omega_0^2}\lambda \right) + \calO (\varepsilon^2) \,, \\
& a_k = \sqrt{\frac{\omega_0}{k_0}} \left(1 + \frac{\varepsilon}{4} \Big( \frac{g}{\omega_0^2} - \frac{2}{k_0} \Big) \lambda \right) + \calO (\varepsilon^2) \,,\\
& a_{1-3} = a(k_1 - k_3) = \frac{\sqrt{|\gamma|}}{ \sqrt{2} \varepsilon^{1/2} |\lambda_1 - \lambda_3|^{1/2}}\left( 1+\frac{\varepsilon g}{\gamma^2} |\lambda_1-\lambda_3|
\right) + \calO (\varepsilon^{3/2}) \,, \\
& \omega_{1-3} = \frac{|\gamma|}{2} \left( 1 + \frac{2g\varepsilon}{\gamma^2} |\lambda_1-\lambda_3| \right) + \calO (\varepsilon^2) \,.
\end{aligned}
\end{equation}
\end{lemma}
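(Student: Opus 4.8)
The plan is to treat \eqref{expansion-identities} as a collection of elementary Taylor/binomial expansions obtained by substituting the Ansatz \eqref{modulation}, $k = k_0 + \varepsilon\lambda$, into the closed-form expressions $\omega_k = \sqrt{\gamma^2/4 + g|k|}$ and $a_k = (\omega_k/|k|)^{1/2}$, and then expanding in powers of $\varepsilon$. The only structural points to keep track of are, first, whether the argument of $\omega$ and $a$ stays near the carrier value $k_0 > 0$ or collapses into a shrinking neighborhood of the origin, and second, careful bookkeeping of the error exponents, which differ between these two cases.

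First I would dispose of $|k|$ and $\sgn(k)$. Since $k_0>0$ and $\lambda/k_0 = \calO(1)$, for $\varepsilon$ small enough one has $k = k_0 + \varepsilon\lambda > 0$, so in fact $|k| = k_0 + \varepsilon\lambda$ and $\sgn(k) = 1$ identically; the stated remainders $\calO(\varepsilon^4)$ and $\calO(\varepsilon^5)$ are simply zero and are recorded only to fix the order to which each quantity is retained later. For $\omega_k$ I would factor the carrier value out of the radical, writing $\omega_k = \sqrt{\omega_0^2 + g\varepsilon\lambda}$ with $\omega_0^2 = \gamma^2/4 + gk_0$, and apply $\sqrt{1+x} = 1 + x/2 + \calO(x^2)$ with $x = g\varepsilon\lambda/\omega_0^2$. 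For $a_k = (\omega_k/|k|)^{1/2}$ I would combine this with $1/|k| = k_0^{-1}(1 - \varepsilon\lambda/k_0 + \calO(\varepsilon^2))$, multiply the two first-order expansions to obtain $a_k^2 = (\omega_0/k_0)(1 + \varepsilon(\tfrac{g}{2\omega_0^2} - \tfrac{1}{k_0})\lambda) + \calO(\varepsilon^2)$, and take a final square root, which halves the first-order coefficient and yields the stated form.

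The genuinely different case is $\omega_{1-3}$ and $a_{1-3}$, and this is where the care is needed. Here the argument is $k_1 - k_3 = \varepsilon(\lambda_1 - \lambda_3)$, which is itself $\calO(\varepsilon)$: the difference of two nearby wavenumbers sits near the origin rather than near $k_0$. The key observation is that, because $\gamma \neq 0$, the dispersion is regular there, with $\omega(0) = |\gamma|/2 \neq 0$; concretely $\omega_{1-3} = \sqrt{\gamma^2/4 + g\varepsilon|\lambda_1-\lambda_3|}$, and factoring out $\gamma^2/4$ (so that $\sqrt{\gamma^2/4} = |\gamma|/2$) and expanding gives the fourth line of \eqref{expansion-identities}. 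For $a_{1-3} = (\omega_{1-3}/|k_1-k_3|)^{1/2}$ I would then divide by $|k_1 - k_3| = \varepsilon|\lambda_1 - \lambda_3|$, which produces the singular prefactor of size $\calO(\varepsilon^{-1/2})$, and take the square root of the resulting bracket.

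The main obstacle is not conceptual but the order bookkeeping in this last step: since the leading factor is of size $\calO(\varepsilon^{-1/2})$, a relative correction of size $\calO(\varepsilon)$ contributes at order $\calO(\varepsilon^{1/2})$ and the first neglected term at $\calO(\varepsilon^{3/2})$, which is exactly the remainder claimed. I would therefore expand $\omega_{1-3}$ to relative order $\calO(\varepsilon)$ before dividing, keep the two absolute values $\sqrt{\gamma^2/4} = |\gamma|/2$ and $|k_1 - k_3| = \varepsilon|\lambda_1-\lambda_3|$ consistent throughout, and note that each difference expansion requires $\gamma \neq 0$ to be well defined — in the irrotational limit the prefactor degenerates, which is the analytic signature of the long-wave/short-wave scale separation that the subsequent homogenization exploits.
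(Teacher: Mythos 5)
Your proposal is correct and matches the paper's (implicit) treatment: the paper states this lemma without proof, regarding all six identities as elementary Taylor/binomial expansions of $|k|$, $\omega(k)=\sqrt{\gamma^2/4+g|k|}$ and $a(k)=(\omega(k)/|k|)^{1/2}$ under the substitution $k=k_0+\varepsilon\lambda$, exactly as you carry out. Your bookkeeping for the difference wavenumber $k_1-k_3=\varepsilon(\lambda_1-\lambda_3)$ — including the $\calO(\varepsilon^{-1/2})$ prefactor, the resulting $\calO(\varepsilon^{3/2})$ remainder, and the role of $\gamma\neq 0$ in keeping $\omega(0)=|\gamma|/2$ nonzero — is precisely what the stated expansions require.
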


\begin{lemma}
\label{lemma-T1-approximation}
Under the modulational Ansatz (\ref{modulation}), we have 
\begin{equation*}
\int T_1 z_1 z_2 \overline z_3 \overline z_4 \delta_{1+2-3-4} dk_{1234} = \varepsilon^3 \int \Big(c_0^l 
 + \varepsilon c_0^r  (\lambda_2 + \lambda_3) \Big)
U_1 U_2 \overline U_3 \overline U_4 \delta_{1+2-3-4} d\lambda_{1234}  + \calO(\varepsilon^5) \,,
\end{equation*}
where $U_j := U(\lambda_j)$, $T_1$ is given in Proposition \ref{lemma-T1-coeff} and the coefficients are 
\begin{equation}
\label{c-0-coeff}
\begin{aligned}
& c_0^l = \frac{k_0^3 \Omega_{0}^2}{8\pi \omega_{0}^2} \,, \quad
c_0^r = \frac{3 k_0^2 \Omega_{0}^2}{16\pi \omega_{0}^2} - \frac{\gamma g k_0^3 \Omega_{0}}{32 \pi \omega_{0}^4} \,.
\end{aligned}
\end{equation}
\end{lemma}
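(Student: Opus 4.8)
The plan is to convert the left-hand integral to the slow variables via \eqref{modulation} and then expand the coefficient $T_1$ of Proposition \ref{lemma-T1-coeff} to first order in $\varepsilon$. First I would substitute $k_j = k_0 + \varepsilon\lambda_j$. By \eqref{U-in-fourier} the monomial becomes $z_1 z_2 \overline z_3 \overline z_4 = U_1 U_2 \overline U_3 \overline U_4$, the measure gives $dk_{1234} = \varepsilon^4 d\lambda_{1234}$, and since the carrier wavenumbers cancel in the resonance condition, $k_1 + k_2 - k_3 - k_4 = \varepsilon(\lambda_1 + \lambda_2 - \lambda_3 - \lambda_4)$, so that $\delta_{1+2-3-4} = \varepsilon^{-1}\delta(\lambda_1 + \lambda_2 - \lambda_3 - \lambda_4)$. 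Together these produce the prefactor $\varepsilon^3$, reducing the claim to the expansion $T_1 = c_0^l + \varepsilon\, c_0^r (\lambda_2 + \lambda_3) + \calO(\varepsilon^2)$, valid as a representative under the symmetric integral.

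Then I would expand each of $D^{(1)}, D^{(2)}, D^{(3)}$ using the elementary estimates \eqref{expansion-identities}. The crucial bookkeeping concerns the factors $|k_i + k_j|$ inside the $D^{(m)}$: for arrangements in which both slots are near $+k_0$ one has $|k_i + k_j| = 2k_0 + \varepsilon(\lambda_i + \lambda_j) + \calO(\varepsilon^2)$, whereas for arrangements mixing a slot near $+k_0$ with a slot near $-k_0$ (produced by the negated indices $-3,-4$) one instead has $|k_i - k_j| = \varepsilon|\lambda_i - \lambda_j| = \calO(\varepsilon)$, which is non-smooth. For $D^{(1)}$ and $D^{(3)}$ this cleanly separates the six permuted-and-negated copies into those whose bracket $|k_1|+|k_4|-2|k_3+k_4|$ tends to $-2k_0$ and those whose bracket tends to $+2k_0$; summing with the prescribed signs gives the leading values $k_0^3/(8\pi)$ and $\gamma^2 k_0^3/(32\pi\omega_0^2)$ respectively. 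The case $D^{(2)}$ is more delicate, as its bracket $|k_1|+|k_4|-|k_3+k_4|-|k_3+k_1|$ vanishes at leading order for four of the six arrangements (a cancellation $k_0 + k_0 - 2k_0 - 0 = 0$), so only two arrangements contribute at leading order, yielding $\gamma k_0^3/(8\pi\omega_0)$. Adding the three pieces and recognizing $\Omega_0 = \tfrac{\gamma}{2} + \omega_0$, the leading total collapses to $\tfrac{k_0^3}{8\pi}\big(1 + \tfrac{\gamma}{\omega_0} + \tfrac{\gamma^2}{4\omega_0^2}\big) = \tfrac{k_0^3 \Omega_0^2}{8\pi\omega_0^2} = c_0^l$.

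For the first-order coefficient I would expand every factor to $\calO(\varepsilon)$ --- the amplitudes $a_i$ and frequencies $\omega_i$ via \eqref{expansion-identities}, together with $|k_i|+|k_j| = 2k_0 + \varepsilon(\lambda_i + \lambda_j)$ --- being careful to retain the first-order contributions of the $D^{(2)}$ arrangements that were silent at leading order. The smooth contributions assemble into a linear form $\sum_j \beta_j \lambda_j$; under the integral I would symmetrize using the invariance of $U_1 U_2 \overline U_3 \overline U_4 \delta_{1+2-3-4}$ under the swaps $1 \leftrightarrow 2$ and $3 \leftrightarrow 4$, and then use the constraint $\lambda_1 + \lambda_2 = \lambda_3 + \lambda_4$ to reduce this to the representative $c_0^r(\lambda_2 + \lambda_3)$. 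The non-smooth contributions, proportional to $|\lambda_i - \lambda_j|$ and coming from the slots near $\pm k_0$, must be shown to drop out: after the same symmetrization they organize into differences such as $|\lambda_2 - \lambda_4| - |\lambda_2 - \lambda_3|$, each of which integrates to zero because the $3 \leftrightarrow 4$ swap exchanges the two absolute values while fixing the rest of the integrand.

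I expect the main obstacle to be exactly this: each $T_1^{(m)}$ is a signed sum of six permuted-and-negated copies of $D^{(m)}$, so eighteen terms must be expanded consistently to first order, and one must correctly classify the arrangements by the leading behaviour of their $|k_i \pm k_j|$ factors --- including the partial cancellations inside $D^{(2)}$ --- and then verify that all non-smooth $\calO(\varepsilon)$ pieces cancel under the integral rather than polluting $c_0^r$. Once these cancellations are in hand, the remainder is automatic: the expansion error in $T_1$ is $\calO(\varepsilon^2)$, which against the $\varepsilon^3$ prefactor yields the stated $\calO(\varepsilon^5)$ bound. The structure parallels the irrotational computation of \cite{GKS22}, the novelty being the $\gamma$-dependent blocks $D^{(2)}, D^{(3)}$ and the replacement of $\omega_0$ by $\Omega_0$ in the final coefficient.
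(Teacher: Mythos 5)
Your proposal is correct and follows essentially the same route as the paper: substitute the modulational Ansatz to extract the $\varepsilon^3$ prefactor, expand each $D^{(m)}$ block via \eqref{expansion-identities}, classify the six permuted-and-negated copies by the leading behaviour of their $|k_i \pm k_j|$ factors (including the leading-order cancellation in four of the six $D^{(2)}$ arrangements), and symmetrize under the integral with the constraint $\lambda_1+\lambda_2=\lambda_3+\lambda_4$ to reduce the linear form to the representative $(\lambda_2+\lambda_3)$. You are in fact more explicit than the paper about the cancellation of the non-smooth $\varepsilon|\lambda_i-\lambda_j|$ contributions under the $3\leftrightarrow 4$ relabelling --- a point the paper subsumes under ``the remaining computations are similar.''
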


\begin{proof}
 $T_1$  given in Proposition \ref{lemma-T1-coeff} is the sum of several terms, all involving coefficients similar to  $D_{1234}^{(1)}$.
The latter includes only factors of type $|k_j|$ and $a_j$, and expansion of these factors is given in the lemma above.
For $D_{12(-3)(-4)}^{(1)}$, we write
$$
D_{12(-3)(-4)}^{(1)} = -\frac{k_0^3}{16\pi} \Big( 1 + \frac{\varepsilon}{2k_0}(\lambda_2 + 3\lambda_3 +2\lambda_4) +  \frac{\varepsilon g}{4 \omega_0^2} (\lambda_1 - \lambda_2 - \lambda_3 + \lambda_4) \Big) + \calO (\varepsilon^2) \,.
$$
The brackets above simplify in the integration due to the symmetry of $z_1 z_2 \overline z_3 \overline z_4$ under index rearrangements and the presence of the delta function $\delta_{1+2-3-4}$, yielding 
\begin{align*}
\int - D_{12(-3)(-4)}^{(1)} z_1 z_2 \overline z_3 \overline z_4 \delta_{1+2-3-4} dk_{1234}  =  \int 
\Big( \frac{k_0^3}{16\pi}+  \frac{3 \varepsilon k_0^2}{32\pi}(\lambda_2 + \lambda_3) \Big)
z_1 z_2 \overline z_3 \overline z_4 \delta_{1+2-3-4} dk_{1234} \,. \\
\end{align*}
The remaining computations  are similar and we have 
\begin{align*}
\int T_1^{(1)} z_1 z_2 \overline z_3 \overline z_4 \delta_{1+2-3-4} dk_{1234}  = \int
\Big( \frac{k_0^3}{8\pi} +  \frac{3 \varepsilon k_0^2}{16\pi}  (\lambda_2 + \lambda_3) \Big)
  z_1 z_2 \overline z_3 \overline z_4 \delta_{1+2-3-4} dk_{1234} \,,
\end{align*}
\begin{equation*}
\begin{aligned}
\int T_1^{(2)} z_1 z_2 \overline z_3 \overline z_4 \delta_{1+2-3-4} dk_{1234} & = \int
\Big(\frac{\gamma k_0^3}{8 \pi \omega_0} + \frac{\varepsilon \gamma k_0^3}{8 \pi \omega_0} ( \frac{3}{2k_0} - \frac{g}{4 \omega_0^2} ) (\lambda_2 + \lambda_3) \Big) \\
& \qquad \quad \times z_1 z_2 \overline z_3 \overline z_4 \delta_{1+2-3-4} dk_{1234} \,, 
\end{aligned}
\end{equation*}
\begin{align*}
\int T_1^{(3)} z_1 z_2 \overline z_3 \overline z_4 \delta_{1+2-3-4} dk_{1234} & = \int
\Big(\frac{\gamma^2 k_0^3}{32\pi \omega_0^2}  +   \frac{\varepsilon \gamma^2 k_0^3}{32\pi \omega_0^2}  \big(\frac{3}{2k_0} - \frac{g}{2\omega_0^2} \big)(\lambda_2 + \lambda_3) \Big) \\
& \qquad \quad \times z_1 z_2 \overline z_3 \overline z_4 \delta_{1+2-3-4} dk_{1234} \,. 
\end{align*}
Using that  $T_1 = T_1^{(1)}+ T_1^{(2)} +T_1^{(3)}$, we get  
\begin{align*}
\label{T1-coefficient-proof}
\int T_1 z_1 z_2 \overline z_3 \overline z_4 \delta_{1+2-3-4} dk_{1234} 
=  \int \Big(c_0^l +\varepsilon c_0^r 
(\lambda_2 + \lambda_3) \Big)
z_1 z_2 \overline z_3 \overline z_4 \delta_{1+2-3-4} dk_{1234}+ \calO(\varepsilon^2) \,.
%
\end{align*}
Writing the right-hand side in terms of $U$ given by \eqref{U-in-fourier} and using that 
$
\delta (k_1+k_2-k_3-k_4) 
= \varepsilon^{-1} \delta (\lambda_1 + \lambda_2 - \lambda_3 -\lambda_4)
$, we obtain the desired result.
\end{proof}
\begin{lemma}
\label{lemma-T2-approximation}
Under the modulational Ansatz (\ref{modulation}), we have  
\begin{equation}
\label{T-2-1-approximation}
\begin{aligned}
\int T_2^{(1)} z_1 z_2 \overline z_3 \overline z_4 \delta_{1+2-3-4} dk_{1234} &=  \varepsilon^3 \int 
\Big(c_1^l + \varepsilon c_1^r  (\lambda_2 + \lambda_3) \Big) U_1 U_2 \overline U_3 \overline U_4 \delta_{1+2-3-4} d\lambda_{1234} 
 + \calO(\varepsilon^5) \,,
\end{aligned}
\end{equation}


\begin{equation*}
\begin{aligned}
\int T_2^{(2)} z_1 z_2 \overline z_3 \overline z_4 \delta_{1+2-3-4} dk_{1234} = \varepsilon^3 \int 
\Big(c_2^l + \varepsilon c_2^r (\lambda_2 + \lambda_3) \Big) U_1 U_2 \overline U_3 \overline U_4 \delta_{1+2-3-4} d\lambda_{1234} 
 + \calO(\varepsilon^5) \,,
\end{aligned}
\end{equation*}
\begin{equation}
\label{T-2-3-approximation}
\begin{aligned}
&\int T_2^{(3)} z_1 z_2 \overline z_3 \overline z_4 \delta_{1+2-3-4} dk_{1234}  \\
& \qquad\qquad = \varepsilon^3 \int 
\Big( c_3^l + \varepsilon c_3^{r,1} (\lambda_2 + \lambda_3)  + \varepsilon^4 c_3^{r,2}  |\lambda_1 - \lambda_3| \Big) 
 U_1 U_2 \overline U_3 \overline U_4 \delta_{1+2-3-4} d\lambda_{1234} 
 + \calO(\varepsilon^5) \,,
\end{aligned}
\end{equation}
where
{\small
\begin{equation} \label{cj-coeff}
\begin{aligned}
& c_1^l = \frac{k_0^3 (2\sq^2  +\gamma \sp)^2}{16\pi \sq^2 \sp 
(2\Omega_{0} + \Omega_{-2k_0})} \,, \quad
\Omega_{\pm 2k_0} = \frac{\gamma}{2} \sgn(\pm 2k_0) + \omega_{2k_0} \,, \\[3pt]
& c_1^r = g c_1^l \left( \frac{2\Omega_{2k_0}
}{\sp (2\sq^2 + \gamma \sp)} - \frac{1}{2 \sq^2} - \frac{1}{2\sp^2} + \frac{3}{2gk_0} - \frac{\sp + \sq}{2\sp \sq
(2\Omega_{0} + \Omega_{-2k_0})} \right) \,, \\[3pt]
& c_2^l = -\frac{k_0^3 (2 \sq^2-\gamma \sp)^2}{16\pi \sq^2 \sp 
(2\Omega_{0} - \Omega_{2k_0})} \,, \\[3pt]
& c_2^r = g c_2^l \left( \frac{2\Omega_{-2k_0}
}{\sp (2\sq^2 - \gamma \sp)} - \frac{1}{2 \sq^2} - \frac{1}{2\sp^2} + \frac{3}{2gk_0} - \frac{\sp - \sq}{2\sp \sq 
(2\Omega_{0} - \Omega_{2k_0})
} \right) \,, \\[3pt]
& c_3^l = 
\frac{\gamma^2 k_0^2  \omega_{0}}{2 \pi g \Omega_{0}} \,,
\quad
 c_3^{r,1} = c_3^l \left( \frac{1}{k_0} + \frac{g \gamma }{8\Omega_{0} \omega_{0}^2} \right) \,,
\quad
c_3^{r,2} = \frac{k_0^2 \omega_0^2}{2\pi \Omega_0^2} \,. 
\end{aligned}
\end{equation}}
\end{lemma}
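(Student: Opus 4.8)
The plan is to insert the modulational Ansatz \eqref{modulation} into the explicit formulas \eqref{T-2-1-term}--\eqref{T-2-3-term} for $T_2^{(1)}$, $T_2^{(2)}$, $T_2^{(3)}$, to Taylor-expand every factor (the $S$- and $A$-coefficients of \eqref{S123-A123} and the dispersion relations in the denominators) in powers of $\varepsilon$ using the expansion identities \eqref{expansion-identities}, and to retain terms through relative order $\varepsilon$. The global prefactor $\varepsilon^3$ arises from the change of measure $dk_{1234}\,\delta_{1+2-3-4} = \varepsilon^3\, d\lambda_{1234}\,\delta_{1+2-3-4}$ combined with $z(k_0+\varepsilon\lambda) = U(\lambda)$ from \eqref{U-in-fourier}, exactly as in the proof of Lemma \ref{lemma-T1-approximation}. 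Throughout, I would use the symmetry of the monomial $U_1 U_2 \overline U_3 \overline U_4$ under $1\leftrightarrow 2$ and $3\leftrightarrow 4$ together with the support constraint $\lambda_1+\lambda_2 = \lambda_3+\lambda_4$ to collapse any symmetric linear form in the $\lambda_j$ into a multiple of $(\lambda_2+\lambda_3)$, just as was done for $T_1$. The overall bookkeeping follows the template of Appendix~B of \cite{GKS22}, the novelty being the extra $\gamma$-dependent contributions.

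For $T_2^{(1)}$ and $T_2^{(2)}$ the expansion is routine. The coefficients in \eqref{T-2-1-term}--\eqref{T-2-2-term} are evaluated at argument triples built from the individual wavenumbers $k_j\approx k_0$ and the sum wavenumbers $\pm(k_1+k_2)\approx \pm 2k_0$ and $\pm(k_3+k_4)\approx\pm 2k_0$. All factors $|k|$, $a_k$, $\omega_k$ occurring there are smooth and $\calO(1)$ near $k_0$ and $2k_0$, and the resonant denominators reduce to $2\Omega_0+\Omega_{-2k_0}$ and $2\Omega_0-\Omega_{2k_0}$, which are nonzero. A direct Taylor expansion to order $\varepsilon$, followed by the symmetrization above, then yields the leading coefficients $c_1^l$, $c_2^l$ and the first-order coefficients $c_1^r$, $c_2^r$ in \eqref{cj-coeff}.

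The term $T_2^{(3)}$ is the genuinely delicate one and is where I expect the bulk of the work. Here each $A$-factor in \eqref{T-2-3-term} carries a \emph{difference} wavenumber in its first slot, e.g. $k_1-k_3=\varepsilon(\lambda_1-\lambda_3)$, which vanishes under the Ansatz. Because constant vorticity forces $\omega_k\to|\gamma|/2\neq 0$ as $k\to 0$, the factor $a_{1-3}=a(k_1-k_3)$ blows up like $\varepsilon^{-1/2}$ instead of remaining bounded, and this is precisely what the dedicated expansions of $a_{1-3}$ and $\omega_{1-3}$ in \eqref{expansion-identities} are meant to control. At the same time the denominator $\Omega_{3-1}+\Omega_1-\Omega_3$ degenerates: its leading ($\varepsilon^0$) part equals $-\tfrac{\gamma}{2}\sgn(\lambda_1-\lambda_3)+\tfrac{|\gamma|}{2}$, which vanishes on the half-plane where $\sgn(\gamma)=\sgn(\lambda_1-\lambda_3)$ and equals $|\gamma|$ on the complementary half. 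On the resonant half the $\calO(\varepsilon)$ numerator $A_{(1-3)(-1)3}A_{(4-2)(-4)2}$ meets the $\calO(\varepsilon^{-1})$ denominator, and using $\lambda_1-\lambda_3=\lambda_4-\lambda_2$ on the support one checks that the $|\lambda_1-\lambda_3|$ factor carried by the numerator cancels against the $(\lambda_1-\lambda_3)$ in the denominator, leaving the $\lambda$-independent constant $c_3^l$. The main obstacle is to carry the $\varepsilon$-corrections of \emph{both} the singular numerator (through $a_{1-3}$ and its $\tfrac{\varepsilon g}{\gamma^2}|\lambda_1-\lambda_3|$ correction) and the near-vanishing denominator (through $\Omega_1-\Omega_3\approx\Omega'(k_0)\varepsilon(\lambda_1-\lambda_3)$ and the $\tfrac{g\varepsilon}{|\gamma|}|\lambda_1-\lambda_3|$ correction in $\omega_{1-3}$) one order beyond leading, keeping careful track of the two sign sectors.

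The smooth parts of these corrections reassemble, after symmetrization, into the $\varepsilon\, c_3^{r,1}(\lambda_2+\lambda_3)$ term, whereas the non-smooth dependence on $|\lambda_1-\lambda_3|$ that survives the cancellation cannot be reduced to a multiple of $(\lambda_2+\lambda_3)$ and must be retained separately; this is the origin of the anomalous $c_3^{r,2}\,|\lambda_1-\lambda_3|$ contribution, which corresponds to a nonlocal operator acting on the envelope. Finally I would add the three pieces, convert measure and amplitudes to the slow variable via \eqref{U-in-fourier}, and read off the coefficients \eqref{cj-coeff}. To summarize, everything except $T_2^{(3)}$ is a disciplined but mechanical expansion patterned on Lemma \ref{lemma-T1-approximation} and \cite{GKS22}; the real difficulty is the sign-sector analysis of the degenerate denominator of $T_2^{(3)}$ together with the matching singular expansion of $a_{1-3}$.
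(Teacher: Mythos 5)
Your proposal follows essentially the same route as the paper's proof in Appendix~\ref{appendix-proof-expansion-lemma}: a direct Taylor expansion of the $S$- and $A$-coefficients and resonant denominators under the Ansatz for $T_2^{(1)}$ and $T_2^{(2)}$ (where the sign prefactors automatically kill the $S$-terms evaluated at sum wavenumbers of the wrong ordering), and for $T_2^{(3)}$ the same sign-sector analysis of the degenerate denominator $\Omega_{3-1}+\Omega_1-\Omega_3$ matched against the $\varepsilon^{1/2}$-singular factors $a_{1-3}$, $\omega_{1-3}$, with the cancellation of $(\lambda_1-\lambda_3)$ producing $c_3^l$ and the surviving non-smooth $|\lambda_1-\lambda_3|$ piece producing $c_3^{r,2}$ — exactly the mechanism the paper implements via the identity $(\sgn(k_1-k_3)+\sgn(\gamma))(\sgn(k_1-k_3)-\sgn(\gamma))=0$. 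The plan is correct and consistent with the printed coefficients.
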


\begin{proof}
The proof is given in Appendix \ref{appendix-proof-expansion-lemma}.
\end{proof}

The leading coefficients in the expansions of Lemmas \ref{lemma-T1-approximation} and \ref{lemma-T2-approximation} combine together as 
\begin{equation*}
c_0^l - \frac{1}{2} \left(c_1^l +c_2^l + c_3^l \right) = \frac{k_0^3 (\omega_0 - \gamma)(\gamma^2 + 4 \omega_0^2)}{8\pi \omega_0 \Omega_0 (2 \omega_0 - \gamma)} \,.
\end{equation*}
Denoting 
\begin{equation} \label{coefA}
\beta := 8\pi \Big[ c_0^r - \frac{1}{2} \big( c_1^r+ c_2^r + c_3^{r,1} \big) \Big] \,,
\end{equation}
where the  $c_j^r$ $(j=0,\dots,2)$ and $c_3^{r,1}$ are given in \eqref{c-0-coeff} and \eqref{cj-coeff},
 the reduced Hamiltonian $\H^{(4)}_+$ takes the form
\begin{equation}
\label{H4-reduced}
\begin{aligned}
\H^{(4)}_+  & = \, \varepsilon^3
\frac{k_0^3 (\omega_0 - \gamma)(\gamma^2 + 4 \omega_0^2)}{8\pi \omega_0 \Omega_0 (2 \omega_0 - \gamma)} \int
U_1 U_2 \overline{U}_3 \overline{U}_4 \delta_{1+2-3-4} d\lambda_{1234}\\ 
& \quad + \varepsilon^4 \int 
\Big(  
\frac{\beta}{8\pi} (\lambda_2 + \lambda_3)  -  \frac{k_0^2 \omega_0^2}{4\pi \Omega_0^2}  |\lambda_1 - \lambda_3| \Big)
U_1 U_2 \overline{U}_3 \overline{U}_4 \delta_{1+2-3-4} d\lambda_{1234}   
 + \mathcal{O}(\varepsilon^5) \,.
\end{aligned} 
\end{equation}

\section{Hamiltonian Dysthe equation}

The third-order normal form transformation eliminates all cubic terms from the Hamiltonian $\H$. In the modulational 
regime (\ref{modulation}), the reduced Hamiltonian truncated at fourth order  is  
\begin{equation}
\label{reduced-H-fourier}
\H  = \H^{(2)} + \H_+^{(4)} \,. 
\end{equation}
The goal now is to derive an associated  Hamiltonian Dysthe equation for deep-water gravity waves with constant vorticity. 

\subsection{Hamiltonian in the physical variables}

\begin{lemma}
\label{lemma-reduced-H-in-physical}
In the physical variables $(u, \overline u)$, the Hamiltonian $\H$ in \eqref{reduced-H-fourier} reads
\begin{equation}
\label{reduced-H-for-dysthe}
\begin{aligned}
\H = & \, \varepsilon \int \overline u \, \Omega (k_0 + \varepsilon D_X) u \, dX + \varepsilon^3 \frac{k_0^3 (\omega_0 - \gamma)(\gamma^2 + 4 \omega_0^2)}{4 \omega_0 \Omega_0 (2 \omega_0 - \gamma)}
\int
|u|^4 dX\\[3pt]
& + \varepsilon^4
\frac{\beta}{2} \int |u|^2 {\rm Im} (\overline{u} \partial_X u) dX - \varepsilon^4 \frac{k_0^2 \omega_0^2}{2 \Omega_0^2} \int |u|^2 |D_X| |u|^2 dX + \calO (\varepsilon^5) \,,
\end{aligned}
\end{equation}
where $D_X = -i \partial_X$ in the slow spatial variable $X$.
\end{lemma}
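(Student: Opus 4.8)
The plan is to translate each term of $\H = \H^{(2)} + \H_+^{(4)}$ from its Fourier representation into the physical $(u,\overline u)$ variables, using only Plancherel's theorem together with $\int_{\R} e^{i\mu X}\,dX = 2\pi\,\delta(\mu)$ and the relation $u(X) = \frac{1}{\sqrt{2\pi}}\int U(\lambda)e^{i\lambda X}\,d\lambda$ extracted from \eqref{z-u-relation-physical}. First I would dispose of the quadratic part. Substituting the Ansatz \eqref{modulation}, with $k = k_0 + \varepsilon\lambda$, $z_k = U(\lambda)$ and $dk = \varepsilon\,d\lambda$, turns \eqref{H2} into $\H^{(2)} = \varepsilon\int \Omega(k_0+\varepsilon\lambda)\,|U(\lambda)|^2\,d\lambda$. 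Since $D_X = -i\partial_X$ has Fourier symbol $\lambda$, the multiplier $\Omega(k_0+\varepsilon D_X)$ acts on $u$ by multiplying $U(\lambda)$ by $\Omega(k_0+\varepsilon\lambda)$, so Plancherel yields $\H^{(2)} = \varepsilon\int \overline u\,\Omega(k_0+\varepsilon D_X)\,u\,dX$, the first term of \eqref{reduced-H-for-dysthe}.

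For the quartic contributions in \eqref{H4-reduced}, I would establish three convolution identities. The leading term uses $u^2 = \frac{1}{2\pi}\iint U_1U_2\, e^{i(\lambda_1+\lambda_2)X}\,d\lambda_1 d\lambda_2$ and $\overline u^2 = \frac{1}{2\pi}\iint \overline U_3\overline U_4\, e^{-i(\lambda_3+\lambda_4)X}\,d\lambda_3 d\lambda_4$, so that integrating $u^2\overline u^2$ over $X$ produces $\delta_{1+2-3-4}$ and gives $\int U_1U_2\overline U_3\overline U_4\,\delta_{1+2-3-4}\,d\lambda_{1234} = 2\pi\int|u|^4\,dX$; multiplying by the $\varepsilon^3$ prefactor recovers the $|u|^4$ term. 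For the $\varepsilon^4(\lambda_2+\lambda_3)$ term, the factor $\lambda_2$ attaches a $D_X$ to the second $u$ and $\lambda_3$ attaches one to the conjugate mode, since $\overline{D_X u}$ has Fourier coefficient $\lambda\,\overline U$ at $e^{-i\lambda X}$. Carrying these through the same computation gives $\int\lambda_2(\cdots)\delta_{1+2-3-4}\,d\lambda_{1234} = 2\pi\int u(D_Xu)\overline u^2\,dX$ and $\int\lambda_3(\cdots)\delta_{1+2-3-4}\,d\lambda_{1234} = 2\pi\int u^2(\overline{D_Xu})\overline u\,dX$. Adding them and using $(D_Xu)\overline u + u\,\overline{D_Xu} = 2\,{\rm Re}[(D_Xu)\overline u] = 2\,{\rm Im}[\overline u\,\partial_Xu]$ collapses the pair to $4\pi\int|u|^2\,{\rm Im}(\overline u\,\partial_Xu)\,dX$, so the prefactor $\beta/(8\pi)$ becomes $\beta/2$.

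The nonlocal term is where the real care is needed and is the main obstacle. Here I would regroup the quartic monomial as a product of two densities $\rho = |u|^2$: writing $\rho = \frac{1}{2\pi}\iint U_1\overline U_3\,e^{i(\lambda_1-\lambda_3)X}\,d\lambda_1 d\lambda_3$, the Fourier mode of $\rho$ carried by the $(1,3)$ pair sits at wavenumber $\lambda_1-\lambda_3$, so $|\lambda_1-\lambda_3|$ is exactly the symbol of $|D_X|$ acting on $\rho$. Pairing with the second density built from the $(2,4)$ indices and integrating over $X$, which enforces $\delta_{1+2-3-4}$, yields $\int|\lambda_1-\lambda_3|\,U_1U_2\overline U_3\overline U_4\,\delta_{1+2-3-4}\,d\lambda_{1234} = 2\pi\int|u|^2\,|D_X|\,|u|^2\,dX$, and the prefactor $-\varepsilon^4 k_0^2\omega_0^2/(4\pi\Omega_0^2)$ produces the stated $-k_0^2\omega_0^2/(2\Omega_0^2)$. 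The delicate point is that one must invoke the constraint $\lambda_1-\lambda_3 = \lambda_4-\lambda_2$ from $\delta_{1+2-3-4}$ to identify $\lambda_1-\lambda_3$ as the wavenumber conjugate to $\rho$, and track the symmetry $1\leftrightarrow3$, $2\leftrightarrow4$ so that the symmetric split into two $|u|^2$ factors is consistent with the absolute value, whereas the local terms require only routine index relabelling. Collecting the four pieces gives \eqref{reduced-H-for-dysthe} up to the indicated $\calO(\varepsilon^5)$ remainder.
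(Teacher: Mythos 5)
Your proposal is correct and follows essentially the same route as the paper: the quadratic term via Plancherel and the change of variables $k=k_0+\varepsilon\lambda$, and the three convolution identities $\int U_1U_2\overline U_3\overline U_4\,\delta_{1+2-3-4}\,d\lambda_{1234}=2\pi\int|u|^4dX$, $\int(\lambda_2+\lambda_3)(\cdots)=4\pi\int|u|^2\,{\rm Im}(\overline u\,\partial_Xu)\,dX$, and $\int|\lambda_1-\lambda_3|(\cdots)=2\pi\int|u|^2|D_X||u|^2dX$, which the paper states without derivation and you verify in detail. The constants all check out, so nothing further is needed.
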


\begin{proof}
The first term in \eqref{reduced-H-for-dysthe} comes out by applying the change of variables \eqref{U-in-fourier} to the quadratic Hamiltonian 
\begin{equation*}
\begin{aligned}
\H^{(2)} & = \int \Omega(k_0+\varepsilon \lambda) |z(k_0 +\varepsilon \lambda)|^2 dk 
 = \varepsilon \int \overline{u} \, \Omega(k_0+\varepsilon D_X) u \, dX \,,
\end{aligned}
\end{equation*}
where we use that $u(X) = \frac{1}{\sqrt{2\pi}} \int e^{i\lambda X} U(\lambda) d\lambda$ is the inverse Fourier transform of $U(\lambda)$. 
Furthermore, 
\begin{align*} \label{eq5-11}
 \int U_1 U_2 \overline U_3 \overline U_4 \delta_{1+2-3-4} d\lambda_{1234} 
& = 2 \pi \int |u|^4 d X \,.
\end{align*}
Similarly, 
\begin{equation*} \label{eq5-12}
\int (\lambda_2 + \lambda_3) U_1 U_2 \overline U_3 \overline U_4 \delta_{1+2-3-4} d\lambda_{1234} = 4 \pi \int |u|^2 {\rm Im}(\overline u \partial_X u) \, dX \,.
\end{equation*}
while the remaining term of $\H_+^{(4)}$ identifies to
\begin{equation*} \label{eq5-13}
\int |\lambda_1-\lambda_3| U_1 U_2 \overline U_3 \overline U_4 \delta_{1+2-3-4} d\lambda_{1234} = 2 \pi \int |u|^2 |D_X| |u|^2 \, dX \,.
\end{equation*}
\end{proof}
Expanding the linear dispersion relation in \eqref{reduced-H-for-dysthe} around $k_0$ as 
\begin{equation*}
\label{omega-taylor-expansion}
\Omega (k_0+ \varepsilon D_X) = \Omega_0 + \frac{g D_X}{2 \omega_0} \varepsilon - \frac{g^2 D_X^2}{8 \omega_0^3} \varepsilon^2 + \frac{g^3 D_X^3}{16 \omega_0^5} \varepsilon^3 + \calO(\varepsilon^4) \,,
\end{equation*}
gives an alternate form of the Hamiltonian $\H$ in the physical variables,
\begin{equation}
\label{reduced-H-for-dysthe-new}
\begin{aligned}
\H = & \, \int \Big[ \varepsilon\, \Omega_0 \, |u|^2 + \varepsilon^2 \frac{g}{2\omega_0} {\rm Im}(\overline u \partial_X u) \\
& \qquad - \varepsilon^3 \frac{g^2}{8 \omega_0^3} |\partial_X u|^2 + \varepsilon^3 \frac{k_0^3 (\omega_0 - \gamma)(\gamma^2 + 4 \omega_0^2)}{4 \omega_0 \Omega_0 (2 \omega_0 - \gamma)} |u|^4 - \varepsilon^4 \frac{g^3}{16 \omega_0^5} {\rm Im}(\overline u \partial_X^3 u) \\
& \qquad + \varepsilon^4
\frac{\beta}{2} |u|^2 {\rm Im} (\overline{u} \partial_X u) - \varepsilon^4 \frac{k_0^2 \omega_0^2}{2 \Omega_0^2} |u|^2 |D_X| |u|^2 \Big] dX +  \calO(\varepsilon^5) \,.
\end{aligned}
\end{equation}

\subsection{Derivation of the Dysthe equation}

Using the relation \eqref{z-u-relation-physical} rewritten as
\begin{equation*}
\label{new-variables-u}
\begin{pmatrix}
u \\ \overline u
\end{pmatrix} = P_2 \begin{pmatrix}
z \\ \overline z  
\end{pmatrix} = \varepsilon^{-1} \begin{pmatrix}
e^{-i k_0 x} & 0 \\
0 & e^{i k_0 x}
\end{pmatrix} \begin{pmatrix}
z \\ \overline z 
\end{pmatrix} \,,
\end{equation*}
the Hamiltonian system \eqref{ww-hamiltonian-in-z} takes the form
\begin{equation}
\label{ww-hamiltonian-in-u}
\partial_t \begin{pmatrix}
u \\ \overline u
\end{pmatrix} = J_2 \begin{pmatrix}
\partial_u \H \\ \partial_{\overline u} \H
\end{pmatrix} = \varepsilon^{-1} \begin{pmatrix}
0 & -i \\ i & 0
\end{pmatrix} \begin{pmatrix}
\partial_u \H \\ \partial_{\overline u} \H
\end{pmatrix} \,,
\end{equation}
where $J_2 = \varepsilon P_2 J_1 P_2^*$ \citep{CGS10}. 
The additional factor $\varepsilon$ in  the definition of
$J_2$ reflects the change in symplectic structure associated with the spatial rescaling $X = \varepsilon \, x$.

Substituting the reduced Hamiltonian \eqref{reduced-H-for-dysthe-new} into \eqref{ww-hamiltonian-in-u},  we get
\begin{equation}  \label{Dysthe}
\begin{aligned}
i \, \partial_t u & = \varepsilon^{-1} \partial_{\overline u} \H \,, \\
& = \Omega_0 u -i \varepsilon \frac{g}{2 \omega_0} \partial_X u + \varepsilon^2 \frac{g^2}{8 \omega_0^3} \partial_X^2 u + \varepsilon^2 \frac{k_0^3 (\omega_0 - \gamma)(\gamma^2 + 4 \omega_0^2)}{2 \omega_0 \Omega_0 (2 \omega_0 - \gamma)} |u|^2 u \\
& \quad + i\varepsilon^3 \frac{g^3}{16 \omega_0^5} \partial_X^3 u -
i \varepsilon^3 \beta |u|^2 \partial_X u - \varepsilon^3 \frac{k_0^2 \omega_0^2}{\Omega_0^2} u |D_X| |u|^2 \,,
\end{aligned}
\end{equation}
which is a Hamiltonian Dysthe equation for two-dimensional gravity waves on deep water with constant vorticity. 
It describes modulated waves moving in the positive $x$-direction at group velocity
$\Omega_0' = \partial_{k} \Omega(k_0) = g/(2 \omega_0)$ as indicated by the advection term. 
The nonlocal term $u |D_X| |u|^2$ is a signature of the Dysthe equation, which reflects the presence of the wave-induced mean flow. 
The coefficient $\beta$ is given by \eqref{coefA}. 
The coefficient of the nonlinear term $|u|^2 u$ above agrees with that in the NLS equation of \cite{TKM12} 
and in the Dysthe equation of \cite{CCK18} (see \eqref{curtis} below) up to scaling  factors consistent with 
the difference in definition for the wave envelope described in the various models.

The first two terms on the right-hand side of \eqref{Dysthe} can be eliminated via phase invariance and reduction to a moving reference frame.
The latter is equivalent, in the framework of canonical transformations, to subtraction from $\calH$ of a multiple of the momentum \eqref{moment}
which reduces to
$$
I  = \int \eta \partial_x\zeta dx = \varepsilon \int \Big[ k_0 |u|^2 
+ \varepsilon \operatorname{Im}(\overline u \partial_{X} u) \Big] dX \,,
$$
while the former is equivalent to subtraction from $\calH$ of a multiple of the wave action 
\begin{equation} \label{action}
M = \varepsilon  \int |u|^2 \, dX \,,
\end{equation}
which is conserved due to the phase-invariance property of the Dysthe equation. 
Because $I$ and $M$ Poisson commute with $\calH$, this transformation preserves the symplectic structure $J_2$ \citep{CGS21b}. 
The resulting Hamiltonian is given by
\begin{align*}
\widehat \calH  =  \calH - \Omega_0' I 
- \big( \Omega_0 - {k}_0 \Omega_0' \big) M \,,
\end{align*}
which, after introducing a new long-time scale $\tau = \varepsilon^2 t$, leads to the following version of the Hamiltonian Dysthe equation 
\begin{equation*} \label{Dy-longtime}
\begin{aligned}
i \, \partial_\tau u 
& =
\frac{g^2}{8 \omega_0^3} \partial_X^2 u +  \frac{k_0^3 (\omega_0 - \gamma)(\gamma^2 + 4 \omega_0^2)}{2 \omega_0 \Omega_0 (2 \omega_0 - \gamma)} |u|^2 u + i\varepsilon \frac{g^3}{16 \omega_0^5} \partial_X^3 u \\
& \quad - i \varepsilon \beta |u|^2 \partial_X u - \varepsilon \frac{k_0^2 \omega_0^2}{\Omega_0^2} u |D_X| |u|^2 \,.
\end{aligned}
\end{equation*}
It governs the long-time evolution of the envelope of modulated waves in a reference frame moving in the positive horizontal direction at group velocity $\Omega_0'$. The nonlocal operator $|D_X|$ is the Fourier multiplier with symbol $|\lambda|$.
The associated Hamiltonian reads more explicitly (after multiplying by $\varepsilon^{-3}$ and dropping the hat)
 \begin{equation*} 
\begin{aligned}
\H = & \, \int \Big[
 - \frac{g^2}{8 \omega_0^3} |\partial_X u|^2 +  \frac{k_0^3 (\omega_0 - \gamma)(\gamma^2 + 4 \omega_0^2)}{4 \omega_0 \Omega_0 (2 \omega_0 - \gamma)}
|u|^4 - \varepsilon \frac{g^3}{16 \omega_0^5} {\rm Im} (\overline u \partial_X^3 u) \\
& \qquad + \varepsilon
\frac{\beta}{2} |u|^2 {\rm Im} (\overline{u} \partial_X u) - \varepsilon \frac{k_0^2 \omega_0^2}{2 \Omega_0^2} |u|^2 |D_X| |u|^2 \Big] dX \,.
\end{aligned}
\end{equation*}
As suggested by \cite{TKDV00}, retaining the exact linear dispersion relation, rather than expanding it in powers of $\varepsilon$,
may provide an overall better approximation of the wave envelope.
On a related note, \cite{OS15} proposed a full-dispersion Davey--Stewartson system and compared its analytical properties
to those of the classical version.
In the present context, the Dysthe equation with full linear dispersion takes the form
\begin{equation*} \label{Dysthe3}
\begin{aligned}
i \, \partial_t u &= \Omega(k_0+\varepsilon D_X) u
+ \varepsilon^2 \frac{k_0^3 (\omega_0 - \gamma)(\gamma^2 + 4 \omega_0^2)}{2 \omega_0 \Omega_0 (2 \omega_0 - \gamma)} |u|^2 u 
- i \varepsilon^3 \beta |u|^2 \partial_X u - \varepsilon^3 \frac{k_0^2 \omega_0^2}{\Omega_0^2} u |D_X| |u|^2 \,,
\end{aligned}
\end{equation*}
and the corresponding Hamiltonian is given by \eqref{reduced-H-for-dysthe}.

\section{Numerical results} 

We now present numerical simulations to illustrate the performance of our Hamiltonian Dysthe equation.
We consider the problem of modulational stability of Stokes waves and examine the influence of vorticity.
We compare these results to predictions by another related envelope model
and to direct simulations of the full nonlinear equations.
We also test the capability of our reconstruction procedure against a simpler approach.

\subsection{Stability of Stokes waves}

We first give the theoretical prediction for modulational or Benjamin--Feir (BF) instability of Stokes waves.
These are represented by the exact uniform solution 
\begin{equation} \label{steady}
u_0(t)  = B_0 e^{-i (\Omega_0 + \varepsilon^2 \beta_0 B_0^2) t} \,,
\end{equation}
for \eqref{Dysthe}, where $B_0$ is a positive real constant and
\[
\beta_0 = \frac{k_0^3 (\omega_0 - \gamma)(\gamma^2 + 4 \omega_0^2)}{2 \omega_0 \Omega_0 (2 \omega_0 - \gamma)} \,.
\]
In the irrotational case ($\gamma = 0$), such a solution is known to be linearly unstable 
with respect to sideband (i.e. long-wave) perturbations.

The formal calculation consists in linearizing \eqref{Dysthe} about $u_0$ by inserting a perturbation of the form
\[
u(X,t) = u_0(t) \big[ 1 + B(X,t) \big] \,,
\]
where
\[
B(X,t) = B_1 e^{\sigma t + i \lambda X} + B_2 e^{\overline \sigma t - i \lambda X} \,,
\]
and $B_1, B_2$ are complex coefficients. We find that the condition 
$\operatorname{Re}(\sigma) \neq 0$ for instability implies
\begin{equation} \label{BFCond}
\alpha = \frac{g^2}{8 \omega_0^3} \lambda^2 \Gamma > 0 \,,
\end{equation}
with
\[
\Gamma = 2 B_0^2 \big( \beta_0 - \varepsilon \beta_3 |\lambda| \big) -  \frac{g^2}{8 \omega_0^3} \lambda^2 \,, \quad
\beta_3 = \frac{k_0^2 \omega_0^2}{\Omega_0^2} \,.
\]
This is a tedious but straightforward calculation for which we skip the details.
Similar calculations can be found in \cite{CCK18,D79,GT11}.

Figure \ref{BF_inst} depicts the normalized growth rate 
\[
\frac{|\textrm{Re}(\sigma)|}{\omega_0} = \frac{\sqrt{\alpha}}{\omega_0} \,,
\]
delimiting the instability region as predicted by condition \eqref{BFCond}
for $(B_0,k_0) = (0.002,10)$ and various values of $\gamma$.
The growth rate (and instability region) for $\gamma = 0$ is also included as a reference.
Hereafter, all the variables are rescaled to absorb $\varepsilon$ back into their definition,
and all the equations are non-dimensionalized by using $1/k_0$ and $1/\sqrt{g k_0}$ 
as characteristic length and time scales respectively, so that $g = 1$.
For convenience, we retain the same notations for all the dimensionless quantities.
We set $\varepsilon = k_0 A_0$ (surface wave steepness), 
noting that the envelope amplitude $B_0$ and the surface amplitude $A_0$ are related via
\begin{equation} \label{amplitude}
B_0 = A_0 \sqrt{\frac{\omega_0}{2k_0}} \,,
\end{equation}
according to \eqref{zeta-variable} and \eqref{z-u-relation-physical}.
The graphs in Fig. \ref{BF_inst} correspond to a wave steepness of about $\varepsilon = 0.05$.
Clearly, the vorticity $\gamma$ (both its magnitude and sign) has an influence on \eqref{BFCond}.
We see that $\gamma < 0$ tends to enhance the instability by amplifying the growth rate
and enlarging the instability region to higher sideband wavenumbers.
On the other hand, $\gamma > 0$ tends to diminish it.
Figure \ref{BF_inst} even suggests that, for sufficiently large $\gamma > 0$, instability no longer occurs.
This is confirmed by Fig. \ref{factor_inst} which shows that the factor $\Gamma$ in \eqref{BFCond}
is no longer positive at any wavenumber $\lambda$ when $\gamma > 3.5$ for $(B_0,k_0) = (0.002,10)$.
A positive vorticity (co-propagating current) has therefore a stabilizing effect on the dynamics of Stokes waves.

\subsection{Reconstruction of the original variables}

At any instant $t$, the surface elevation and velocity potential can be reconstructed from the wave envelope
by inverting the normal form transformation. 
This is accomplished by solving the auxiliary system 
\eqref{normal-form-eta-evolution}--\eqref{normal-form-zeta-evolution} backward from $s = 0$ to $s = -1$,
with ``initial'' conditions given by the transformed variables
\begin{eqnarray} \label{init_eta}
\eta(x,t) \big|_{s=0} & = & \frac{1}{\sqrt{2}} a^{-1}(D) \Big[ u(x,t) e^{i k_0 x} + \overline{u}(x,t) e^{-i k_0 x} \Big] \,, \\
\zeta(x,t) \big|_{s=0} & = & \frac{1}{i \sqrt{2}} a(D) \Big[ u(x,t) e^{i k_0 x} - \overline{u}(x,t) e^{-i k_0 x} \Big] \,,
\label{init_zeta}
\end{eqnarray}
according to \eqref{eta-xi-via-zeta} and \eqref{z-u-relation-physical}.
In these expressions, $u$ obeys \eqref{Dysthe} and $a^{-1}(D) = \sqrt{|D|/\omega(D)}$.
The final solution at $s = -1$ represents the original variables $(\eta,\zeta)$.
Starting from the first harmonics (with carrier wavenumber $k_0$) in the initial conditions \eqref{init_eta}--\eqref{init_zeta},
the evolutionary process in $s$ will automatically generate the next-order contributions
from lower and higher harmonics via nonlinear interactions according to \eqref{normal-form-eta-evolution}--\eqref{normal-form-zeta-evolution}.
Recall also that the non-canonical velocity potential $\xi$ can be recovered from the canonical one $\zeta$ 
via the direct relation \eqref{xi-zeta-relation}.

\subsection{Simulations and comparisons}

For the comparison, the full nonlinear system \eqref{ww-nearly-hamiltonian-equation} is solved numerically 
following a high-order spectral approach \citep{CS93}.
The corresponding equations read more explicitly
\begin{eqnarray} \label{HamMotionEta}
\partial_t \eta & = & G(\eta) \xi + \gamma \eta \partial_x \eta \,, \\
\partial_t \xi & = & -g \eta - \frac{1}{2} (\partial_x \xi)^2
+ \frac{1}{2} \frac{\big[ G(\eta) \xi + (\partial_x \eta) (\partial_x \xi) \big]^2}{1 + (\partial_x \eta)^2} 
+ \gamma \eta \partial_x \xi + \gamma \partial_x^{-1} G(\eta) \xi \,.
\label{HamMotionXi}
\end{eqnarray}
These are discretized in space by a pseudo-spectral method based on the fast Fourier transform (FFT).
The computational domain is taken to be $0 \le x \le 2\pi$ with periodic boundary conditions
and is divided into a regular mesh of $N$ collocation points.
The DNO is computed via its series expansion \eqref{series} for which a small number $m$ of terms 
is sufficient to achieve highly accurate results by virtue of its analyticity properties. 
The value $m = 6$ is selected based on previous extensive tests \citep{XG09}.
Time integration of \eqref{HamMotionEta} and \eqref{HamMotionXi} is carried out in the Fourier space 
so that linear terms can be solved exactly by the integrating factor technique.
The nonlinear terms are integrated in time by using a 4th-order Runge--Kutta scheme with constant step $\Delta t$.
More details can be found in \cite{G17,G18}.

The same numerical methods are applied to the envelope equation \eqref{Dysthe},
as well as to the reconstruction procedure, with the same resolutions in space and time.
In particular, the auxiliary system \eqref{normal-form-eta-evolution}--\eqref{normal-form-zeta-evolution} 
is integrated in $s$ by using the same step size $\Delta s = \Delta t$.
While this system of equations may look complicated, its numerical treatment is straightforward
and efficient via the FFT.
Moreover, because this computation is not performed at each instant $t$
(only when data on $\eta$ are required) and because it is performed over a short interval $-1 \le s \le 0$,
the associated cost is insignificant.
Note that, by virtue of the zero-mass assumption \eqref{zero-mass}, indetermination at $k = 0$ in the evaluation
of any quantity involving a Fourier multiplier such as $\partial_x^{-1}$ or $|D|^{-1}$ may be lifted
by simply setting its zeroth-mode component to zero.

To examine the stability of Stokes waves in the presence of a shear current, initial conditions of the form
\begin{equation} \label{init_u}
u(x,0) = B_0 \big[ 1 + 0.1 \cos(\lambda x) \big] \,,
\end{equation}
are specified for \eqref{Dysthe}, where $\lambda$ denotes the wavenumber of some long-wave perturbation.
For the purpose of comparing with the full system \eqref{HamMotionEta}--\eqref{HamMotionXi}, 
initial conditions $\eta(x,0)$ and $\xi(x,0)$ are reconstructed by solving 
\eqref{normal-form-eta-evolution}--\eqref{normal-form-zeta-evolution} from transformed initial data 
\eqref{init_eta}--\eqref{init_zeta} given in terms of \eqref{init_u}.

The following tests focus on the case $(B_0,k_0,\lambda) = (0.002,10,1)$ as considered in the previous stability analysis.
The spatial and temporal resolutions are set to $\Delta x = 0.012$ ($N = 512$) and $\Delta t = 0.005$.
Figure \ref{L2err_wmp1_2} shows the time evolution of the relative $L^2$ error
\begin{equation} \label{errors}
\frac{\| \eta_f - \eta_w \|_2}{\| \eta_f \|_2} \,,
\end{equation}
on $\eta$ between the fully ($\eta_f$) and weakly ($\eta_w$) nonlinear solutions, for various values of $\gamma \lessgtr 0$.
We see that the errors remain under unity (i.e. under 100\%) over the time interval $0 \le t \le 1000$,
noting that the validity of the Dysthe equation deteriorates faster as $\gamma$ is decreased.
This is expected in view of the stability analysis because the solution tends to become more unstable
(and thus more nonlinear) with decreasing $\gamma$.
Development of the BF instability is especially apparent for $\gamma = -1$ and $-2$ 
as indicated by a hump in their error plots.

Comparison of surface elevations $\eta$ predicted from the weakly nonlinear equation \eqref{Dysthe} 
and the full nonlinear system \eqref{HamMotionEta}--\eqref{HamMotionXi}
is presented in Fig. \ref{wave_u0002_k10} for the same set of values of $\gamma$
at their respective times of maximum wave growth.
The perturbed Stokes wave at $t = 0$ (which is the same initial condition for all cases considered)
is depicted in Fig. \ref{wave_u0002_k10}(a).
These results are consistent with our previous observations from Figs. \ref{BF_inst} and \ref{L2err_wmp1_2}.
Excitation and growth of the most unstable sideband mode $\lambda = 1$ (according to the stability analysis)
are clearly revealed in these plots.
A more negative $\gamma$ promotes the BF instability (by making it happen sooner with a stronger wave amplification),
while a more positive $\gamma$ tends to reduce and even offset it.
In all these cases, the Dysthe model is found to provide a very good approximation up to at least $t = 1000$.
As expected, for $\gamma = -2$, discrepancies are more pronounced due to the higher nonlinearity reached:
a slight phase lag and drop in wave amplitude can be discerned around the main peak at $t = 500$.

It is suitable to compare our Hamiltonian Dysthe equation \eqref{Dysthe} 
with another related model that has recently been derived by \cite{CCK18,CM20} in the same physical setting.
Note that these authors additionally considered surface tension but we will only examine the gravity-wave version of their model.
Moreover, because they expressed their model in a form that contains a first derivative in time as well as a mixed derivative 
in space and time (see Eq. 2.37 in \cite{CCK18}), we find it more appropriate to rewrite it in a more standard form
with a single time derivative (as it is typically so for the Dysthe equation \citep{D79}) to allow for a fairer comparison.
We also take into account the fact that vorticity in the mathematical formulation used by \cite{CCK18}
is defined as the opposite of ours.
The resulting model for the first-harmonic envelope $\eta_1$ is given by
\begin{eqnarray} \label{curtis}
\partial_t \eta_1 & = & -c_g \partial_x \eta_1 - i \frac{c_g^2}{d_0} \partial_x^2 \eta_1
+ 2 \frac{c_g^3}{d_0^2} \partial_x^3 \eta_1
- i \frac{\widetilde \alpha_0}{d_0} |\eta_1|^2 \eta_1 \nonumber \\
& & - \frac{\widetilde \alpha_1}{d_0} |\eta_1|^2 \partial_x \eta_1
- \frac{\widetilde \alpha_2}{d_0} \eta_1^2 \partial_x \overline \eta_1 
+ i \frac{\widetilde \alpha_3}{d_0} \eta_1 \Hil \partial_x |\eta_1|^2 \,,
\end{eqnarray}
where
\[
d_0 = 2 \Omega_0 - \gamma \,, \quad c_g = \frac{1}{d_0} \,, \quad d_1 = \frac{d_0}{1 + \gamma c_g} \,, \quad
\widetilde \alpha_0 = \alpha_0 - \gamma^2 k_0 d_0 d_1 \,,
\]
\[
\widetilde \alpha_1 = \alpha_1 - \gamma^2 k_0 d_1 (c_g + \alpha_d \gamma d_1)
- \gamma^2 d_1 (2 d_0 + 3 k_0 c_g) + 4 c_g (\gamma^2 k_0 d_1 - \alpha_0 c_g) \,,
\]
\[
\widetilde \alpha_2 = \alpha_2 + \gamma^2 k_0 d_1 (c_g + \alpha_d \gamma d_1)
- \gamma^2 d_1 (d_0 + k_0 c_g) + 2 c_g (\gamma^2 k_0 d_1 - \alpha_0 c_g) \,,
\]
\[
\widetilde \alpha_3 = \alpha_3 - \gamma k_0 c_g d_1 (d_0 +  d_1) \,.
\]
The reader is directed to \cite{CCK18} where the expressions of $\alpha_d$ and $\alpha_j$ ($j=0,\dots,3$) can be found.
Note that $\Hil \partial_x = |D|$ for the nonlocal term in \eqref{curtis}.
For the purpose of comparing with the full system \eqref{HamMotionEta}--\eqref{HamMotionXi},
we have also re-expressed Curtis et al.'s model in a fixed reference frame,
hence the additional advection term in \eqref{curtis} as compared to Eq. (2.37) in \cite{CCK18}.
In the following discussion, we will refer to \eqref{curtis} as the ``classical'' Dysthe equation for this problem,
because it is not Hamiltonian and has the same typical form as in the irrotational case.
Furthermore, its derivation is based on a perturbative Stokes-type expansion for the dependent variables
$\eta$ and $\xi$, which is similar to the classical derivation by the method of multiple scales \citep{D79}.
Indeed, following \cite{CCK18}, the surface elevation and velocity potential at any instant $t$ 
can be reconstructed perturbatively from $\eta_1$ as
\begin{eqnarray} \label{stokes}
\eta(x,t) & = & \gamma d_1 |\eta_1|^2 + 2 \, {\rm Re} \big( \eta_1 e^{i \theta} + \ell_0 \eta_1^2 e^{2 i \theta} + \dots \big) \,,
\nonumber \\
\xi(x,t) & = & \Hil \left( \frac{\Omega_0}{k_0} \eta - \frac{\Omega_0}{k_0} \eta \Hil \partial_x \eta
- \frac{\gamma}{2} \eta^2 + \frac{\Omega_0}{2 k_0} \Hil \partial_x \big( \eta^2 \big) + \dots \right) \,,
\end{eqnarray}
for which the expression of $\ell_0$ can be found in \cite{CCK18}, and only contributions from up to the second harmonics
are included here because \cite{CCK18} did not provide expressions for contributions from higher harmonics.
The phase function is given by $\theta = k_0 x - \Omega_0 t$.
The ``classical'' reconstruction procedure based on \eqref{stokes} clearly differs from the present approach.
It is more explicit and thus computationally more efficient but is perturbative.
Contributions at each order up to the desired one need to be derived and their expressions become increasingly complicated.
On the other hand, our Hamiltonian procedure requires solving an auxiliary system of PDEs
to reconstruct $\eta$ and $\zeta$ (or $\xi$) from $u$ but it is non-perturbative.
Indeed, Eqs. \eqref{normal-form-eta-evolution} and \eqref{normal-form-zeta-evolution} 
constitute an exact representation of the Birkhoff normal form transformation that eliminates non-resonant triads in this problem.

As an illustration, Fig. \ref{L2comp_u0002_k10} compares the $L^2$ errors \eqref{errors} on $\eta$ from 
the classical and Hamiltonian Dysthe equations in the large-vorticity cases $\gamma = \pm 2$.
For each of these models, the error is calculated relative to the fully nonlinear solution with respective
initial conditions $\eta(x,0)$ and $\xi(x,0)$.
These are provided by \eqref{stokes} with
\[
\eta_1(x,0) = \frac{A_0}{2} \big[ 1 + 0.1 \cos(\lambda x) \big] \,,
\]
when \eqref{curtis} is tested against \eqref{HamMotionEta}--\eqref{HamMotionXi}.
Recall that $A_0$ and $B_0$ are related through \eqref{amplitude}.
We use the same numerical methods as described earlier 
(and specify the same resolutions in space and time) to solve \eqref{curtis} and evaluate \eqref{stokes}.
For $\gamma = -2$, both Dysthe solutions are found to perform similarly,
with the error from the Hamiltonian model being slightly lower than that from the classical model.
The relatively quick loss of accuracy in this case, which is common to both models
(with errors reaching near 50\% at $t \simeq 500$) should be attributed to deterioration of the Dysthe approximation
during development of the BF instability, rather than to the reconstruction procedure.
By contrast, for $\gamma = +2$, the errors remain small and do not vary much over the time interval $0 \le t \le 1000$,
which is expected considering that the solution is more stable in this case.
We see however that the present approach outperforms the classical one by about an order of magnitude.
In all these error plots, the seemingly sharp values near $t = 0$ are already an indication of the level of approximation
associated with the different equations,
as they represent adjustment of the full system \eqref{HamMotionEta}--\eqref{HamMotionXi} to the imposed initial conditions
during early stages of the simulation.

The corresponding surface profiles are depicted in Fig. \ref{wavec_u0002_k10_wm2} for the unstable case $\gamma = -2$,
with predictions from each Dysthe model being compared to the fully nonlinear solution.
Snapshots of $\eta$ at $t = 390$ (early stage of BF instability), $t = 500$ (around the time of maximum wave growth)
and $t = 1000$ (near the end of the quasi-recurrent cycle of modulation-demodulation) are presented.
The satisfactory performance of both Dysthe solutions as indicated in this figure
is consistent with the error plots in Fig. \ref{L2comp_u0002_k10}.
A noticeable discrepancy between the weakly and fully nonlinear predictions 
is a phase lag that tends to develop over time.
Otherwise, salient features of the wave dynamics (including the shape of the steep wave at $t = 500$)
seem to be well captured, even in this highly focusing regime.
Regarding the comparison of surface profiles for $\gamma = +2$, these look indistinguishable from 
Fig. \ref{wave_u0002_k10}(f) at the graphical scale and thus are not displayed for convenience.

We point out in passing that the main purpose of these tests is not to show whether one modulational approach is better than the other.
In particular, regarding the reconstruction procedure for the classical Dysthe equation,
we understand that adding contributions from higher harmonics to formulas \eqref{stokes}
would likely improve their accuracy and lead to closer agreement with the full system.
Rather, a goal here is to validate our new Hamiltonian approach against other existing formulations.
As a byproduct of this comparison, given the overall positive assessment based on Figs. \ref{L2comp_u0002_k10}
and \ref{wavec_u0002_k10_wm2}, we in turn provide an independent validation of Curtis et al.'s model.
Such a validation was not conducted in their earlier study \citep{CCK18,CM20}.

It is comforting to see that the solution of \eqref{normal-form-eta-evolution}--\eqref{normal-form-zeta-evolution} 
helps achieve an accurate computation of the free surface in our Hamiltonian framework,
which was not obvious considering the rather lengthy expressions of \eqref{normal-form-eta-evolution}--\eqref{normal-form-zeta-evolution}.
The good agreement found also confirms the validity of the zero-mass assumption \eqref{zero-mass}
since it is used to evaluate nonlocal terms in \eqref{normal-form-eta-evolution}--\eqref{normal-form-zeta-evolution}.
To further demonstrate the effectiveness of this reconstruction scheme 
(which we will refer to as full reconstruction by solving \eqref{normal-form-eta-evolution}--\eqref{normal-form-zeta-evolution}),
we now test the Hamiltonian Dysthe equation \eqref{Dysthe} by simply using \eqref{init_eta}--\eqref{init_zeta}
to recover $\eta$ and $\zeta$ from $u$ at any instant $t$ 
(which we will refer to as partial reconstruction).
This simplified procedure is equivalent to retaining only contributions from the first harmonics
in the representation of $\eta$ and $\zeta$.

The $L^2$ errors \eqref{errors} associated with these two versions of our Hamiltonian approach are illustrated 
in Fig. \ref{L2comp1st_u0002_k10} for $\gamma = \pm 2$.
We have made sure again that suitable initial conditions are specified for the full system 
\eqref{HamMotionEta}--\eqref{HamMotionXi} when comparing it to each version.
These results confirm that the decline in performance (for partial vs. full reconstruction of $\eta$) can be considerable.
The difference is found to be by about an order of magnitude for $\gamma = -2$
and by more than two orders of magnitude for $\gamma = +2$.
In both cases, the errors quickly grow to exceed 100\% at some point during the time interval $0 \le t \le 1000$.

Examination of the surface profiles obtained from partial reconstruction as compared to the fully nonlinear solution 
is provided in Fig. \ref{wave1st_u0002_k10_wm2} for $\gamma = \pm 2$.
Consistent with the error plots in Fig. \ref{L2comp1st_u0002_k10}, we see that the discrepancies in wave amplitude and phase
tend to develop faster. The phase lag is clearly noticeable and affects the entire wave train, even in the stabilizing case $\gamma = +2$.
It is so severe for $\gamma = -2$ that the weakly nonlinear solution appears completely out of phase at $t = 1000$
during the near-recurrent stage.

Finally, the time evolution of the relative error
\[
\frac{\Delta \calH}{\calH_0} = \frac{|\calH - \calH_0|}{\calH_0} \,,
\]
on energy \eqref{reduced-H-for-dysthe-new} associated with the Hamiltonian Dysthe equation \eqref{Dysthe} 
is shown in Fig. \ref{ener_u0002_k10_wmp1_2} for various values of $\gamma$.
Integrals in \eqref{reduced-H-for-dysthe-new} and in the $L^2$ norm \eqref{errors} are computed via the trapezoidal rule 
over the periodic cell $[0,2\pi]$.
The reference value $\calH_0$ denotes the initial value of \eqref{reduced-H-for-dysthe-new} at $t = 0$.
Overall, $\calH$ is very well conserved in all these cases.
The gradual loss of accuracy over time, which becomes more pronounced as $\gamma$ is decreased,
is likely due to amplification of numerical errors triggered by the BF instability.

\section{Conclusion}

Starting from the basic Hamiltonian formulation of the water wave problem with constant vorticity as proposed by \cite{W07,CIP08},   
we derive a Hamiltonian version of the Dysthe equation (a higher-order NLS equation) 
for the nonlinear modulation of two-dimensional gravity waves on deep water, in the presence of a background uniform shear flow. 
The resulting model exhibits a well-defined symplectic structure and conserves an energy (i.e. the reduced Hamiltonian) over time.
Our methodology, introduced recently for two- and three-dimensional irrotational gravity waves \citep{CGS21a, GKSX21,GKS22},
consists in performing a sequence of canonical transformations that involve a reduction to normal form
(devoid of non-resonant triads) and use of a modulational Ansatz together with a scale separation lemma.
A novelty of our approach is a direct reconstruction of the surface variables from the wave envelope 
through inversion of the third-order normal form transformation. 
This reconstruction requires solving an auxiliary Hamiltonian system of PDEs, for which we provide an explicit derivation.
Such a procedure differs from the classical one where physical quantities like the surface elevation 
are reconstructed perturbatively in terms of a Stokes expansion.  
As a consequence, both steps (solving for the wave envelope and recovering the surface elevation) 
consistently fit within a Hamiltonian framework.

To validate our approximation, we perform numerical simulations of this Hamiltonian Dysthe equation and compare them to computations 
based on the full water wave system and another related Dysthe equation recently derived by \cite{CCK18} in the same setting.  
For a range of values of the vorticity, we examine the long-time dynamics of perturbed Stokes waves
and find very good agreement, thus providing a verification for both Dysthe models. 
In particular, the performance of our Hamiltonian model is found to be quite satisfactory over the entire range considered.
We observe that the presence of vorticity clearly has an effect on the BF instability of Stokes waves on deep water.  
Consistent with results from previous studies, a counter-propagating shear flow (negative vorticity) tends to enhance this instability 
as it amplifies the growth rate and enlarges the instability region to higher sideband wavenumbers, 
while a co-propagating current (positive vorticity) tends to stabilize it.  
We hope this Hamiltonian Dysthe equation may serve as an efficient tool to study wave-current interactions in future applications.
As subsequent work, it would be of interest to extend the present method to the situation of constant finite depth with possibly surface tension.
For this problem, the reduction to normal form is expected to be significantly more complicated.

\appendix

\section{Proof of Lemma \ref{lemma-T2-approximation}}\label{appA}
\label{appendix-proof-expansion-lemma}

We provide here the main steps in the proof of Lemma \ref{lemma-T2-approximation}. 

\subsection{Computation of $T_2^{(1)}$} 

First, we notice that the terms $S_{(-1-2)12}$, $S_{12(-1-2)}$, $S_{(-3-4)34}$ and $S_{34(-3-4)}$ in  \eqref{T-2-1-term} are of order $\calO (\varepsilon^2)$. 
 Indeed, under the modulational Ansatz \eqref{modulation}, we have 
\begin{equation*}
\begin{aligned}
S_{(-1-2)12} = \frac{1-\sgn (k_1+k_2) \sgn (k_2)}{a_1 a_2 a_{1+2}} \Big( -(k_1+k_2)k_2 a_{1+2}^2 a_2^2 - \frac{\gamma}{2} k_1 a_1^2 \Big) = \calO (\varepsilon^2) \,,
\end{aligned}
\end{equation*}
where, from \eqref{expansion-identities}, we have
$1-\sgn (k_1+k_2) \sgn (k_2) = \calO (\varepsilon^2)$ 
and
$a_1, a_2 , a_{1+2} = \calO (1)$.
The computations of $S_{12(-1-2)}$, $S_{(-3-4)34}$ and $S_{34(-3-4)}$ are similar.  We thus skip such terms as we approximate $T_2^{(1)}$ up to  order $\calO (\varepsilon)$ only.
By contrast, the terms $S_{2(-1-2)1}$ and $S_{4(-3-4)3}$ are of order $\calO (1)$, and they both contribute to the $\calO (\varepsilon)$-expansion of  $T_2^{(1)}$.
For the expansion of $S_{2(-1-2)1}$, we use \eqref{expansion-identities} and obtain
\begin{equation*}
\begin{aligned}
S_{2(-1-2)1} &= \frac{(2k_0)^{3/2} (2\omega_0^2 + \gamma \omega_{2k_0})}{2\omega_0 \sqrt{\omega_{2k_0}}}\\
&\quad \times \left[ 1 - \frac{\varepsilon g}{4} \Big( \frac{1}{\sq^2} + \frac{1}{\sp^2} - \frac{3}{gk_0} \Big) (\lambda_1+\lambda_2) + \frac{\varepsilon g \Omega_{2k_0}}{\sp (2\sq^2 + \gamma \sp)} (\lambda_1+\lambda_2) \right] \,,
\end{aligned}
\end{equation*}
with a similar expression for $S_{4(-3-4)3}$ where $(\lambda_1+\lambda_2)$ is replaced by $(\lambda_3+\lambda_4)$. It remains to get an expansion for the bracket on the second line of \eqref{T-2-1-term}. Using \eqref{expansion-identities}, we have 
\begin{equation*}
\begin{aligned}
\frac{1}{\Omega_{1}+ \Omega_{2}+ \Omega_{-1-2}} +& \frac{1}{\Omega_{3}+ \Omega_{4}+ \Omega_{-3-4}} \\[2pt] 
&=\frac{2}{2\Omega_0 + \Omega_{-2k_0}}
\left(1 - \frac{\varepsilon g (\sq+\sp)}{4 \sq \sp (2\Omega_0 + \Omega_{-2k_0})}(\lambda_1+\lambda_2+\lambda_3+\lambda_4) \right) \,.
\end{aligned}
\end{equation*}
We substitute the above estimates into the expression \eqref{T-2-1-term} for $T_2^{(1)}$  and use that
\begin{equation*}
\int (\lambda_1+\lambda_2+\lambda_3+\lambda_4) z_1 z_2 \overline z_3 \overline z_4 \delta_{1+2-3-4} dk_{1234} =  2\int (\lambda_2+\lambda_3) z_1 z_2 \overline z_3 \overline z_4 \delta_{1+2-3-4} dk_{1234} \,,
\end{equation*}
to get 
\begin{equation*}
\begin{aligned}
\int T_2^{(1)} z_1 z_2 \overline z_3 \overline z_4 \delta_{1+2-3-4} dk_{1234} &=  \int \Big( 
 c_1^l   + \varepsilon c_1^r  (\lambda_2 + \lambda_3) \Big) z_1 z_2 \overline z_3 \overline z_4 \delta_{1+2-3-4} dk_{1234}
  + \calO(\varepsilon^2) \,,
\end{aligned}
\end{equation*}
which identifies to 
 \eqref{T-2-1-approximation} in terms of the variable $U$. 
A similar calculation is performed for $T_2^{(2)}$.

\subsection{Computation of $T_2^{(3)}$}

We estimate each term in \eqref{T-2-3-term} under the modulational Ansatz \eqref{modulation}. Due to dependence on $(k_1 - k_3)$, these terms are of different orders compared to the above computations for $T_2^{(1)}$. Indeed, we show that  $A_{(1-3)(-1)3}$ and $A_{(4-2)(-4)2}$ are of order $\calO (\varepsilon^{1/2})$, and the bracket in \eqref{T-2-3-term} is of order $\calO (\varepsilon^{-1})$. 

We start with $A_{(1-3)(-1)3}$. Using the relation  \eqref{S123-A123}, we need to compute $S_{(1-3)(-1)3}$, $S_{3(1-3)(-1)}$ and $S_{(-1)3(1-3)}$. 
We immediately rule out the contribution from  $S_{3(1-3)(-1)}$ as it is of order $\calO (\varepsilon^{5/2})$. The remaining terms are combined using \eqref{S123-A123} as follows
\begin{equation*}
\begin{aligned}
S_{(1-3)(-1)3} - S_{(-1)3(1-3)} = \frac{1}{a_1 a_3 a_{1-3}} \Big(& \omega_{1-3} (\omega_3 - \omega_1) + \frac{\gamma}{2} \sgn (k_1-k_3) (\omega_1 - \omega_3) \\
& + \frac{\gamma}{2} (\omega_1 + \omega_3) + \sgn (k_1-k_3) \omega_{1-3} (\omega_1 + \omega_3) \Big) \,.
\end{aligned}
\end{equation*}
 Using expansions \eqref{expansion-identities}, 
 we obtain 
\begin{align*}
&S_{(1-3)(-1)3} - S_{(-1)3(1-3)} = \frac{\sqrt{2}k_0 \varepsilon^{1/2} |\lambda_1 - \lambda_3|^{1/2}}{\sqrt{ |\gamma|} \omega_0}
\Big[ 1 - \frac{\varepsilon}{4} \Big( \frac{g}{\omega_0^2} - \frac{2}{k_0} \Big) (\lambda_1+\lambda_3) - \frac{g}{\gamma^2} \varepsilon |\lambda_1-\lambda_3| \Big] \\
&\qquad \times \Big[ |\gamma| \omega_0 (\sgn(\gamma) + \sgn (k_1-k_3))  
+ \frac{\varepsilon g |\gamma|}{4 \omega_0} (\sgn(\gamma) + \sgn (k_1-k_3)) (\lambda_1+\lambda_3)
\\
& \qquad \quad
+ \frac{\varepsilon g \gamma}{4\omega_0} (\sgn (k_1-k_3) - \sgn(\gamma)) (\lambda_1 - \lambda_3) + \frac{2\varepsilon g \omega_0}{|\gamma|} (\lambda_1-\lambda_3) \Big] \,,
\end{align*}
and a similar expression for $A_{(4-2)(-4)2}$ with $(k_1, k_3)$ replaced by $(k_4, k_2)$.
Furthermore, using that 
$(\sgn(k_1-k_3)+ \sgn(\gamma)) (\sgn(k_1-k_3)- \sgn(\gamma)) = 0$,
several  terms in the product $A_{(1-3)(-1)3} A_{(4-2)(-4)2}$ vanish, and 
{\small{
\begin{align*}
A_{(1-3)(-1)3} A_{(4-2)(-4)2} = \frac{\varepsilon k_0^2 \gamma (\lambda_1 - \lambda_3)}{16 \pi} (1+\sgn (\gamma) \sgn(k_1-k_3))
  \Big( 1 + \frac{\varepsilon}{2k_0} (\lambda_1 + \lambda_2+ \lambda_3 + \lambda_4) \Big) +\calO (\varepsilon^2) \,.
\end{align*}
}}
In addition, for \eqref{T-2-3-term}, we have 
\begin{equation*}
\begin{aligned}
\frac{1}{\Omega_{3- 1}+ \Omega_{1}- \Omega_{3}} + \frac{1}{\Omega_{2-4}+ \Omega_{4}- \Omega_{2}} &= \frac{2 \gamma \omega_0 }{\varepsilon (\lambda_1 - \lambda_3) g \Omega_0}\\
& \times \left( 1 + \frac{\varepsilon g \gamma }{16 \omega_0^2 \Omega_0} (\lambda_1 + \lambda_2+ \lambda_3 + \lambda_4) + \frac{\varepsilon g \omega_0}{\gamma^2 \Omega_0}|\lambda_1-\lambda_3| \right) \,.
\end{aligned}
\end{equation*}
We combine these estimates according to \eqref{T-2-3-term} and Eq. \eqref{T-2-1-approximation} follows.

\section*{Acknowledgements}
A. K. thanks McMaster University for its
support.
C. S. is partially supported by the NSERC (grant
number 2018-04536) and a Killam Research Fellowship from the Canada Council for the Arts. 






\begin{figure}
\centering
\subfloat{\includegraphics[width=.5\linewidth]{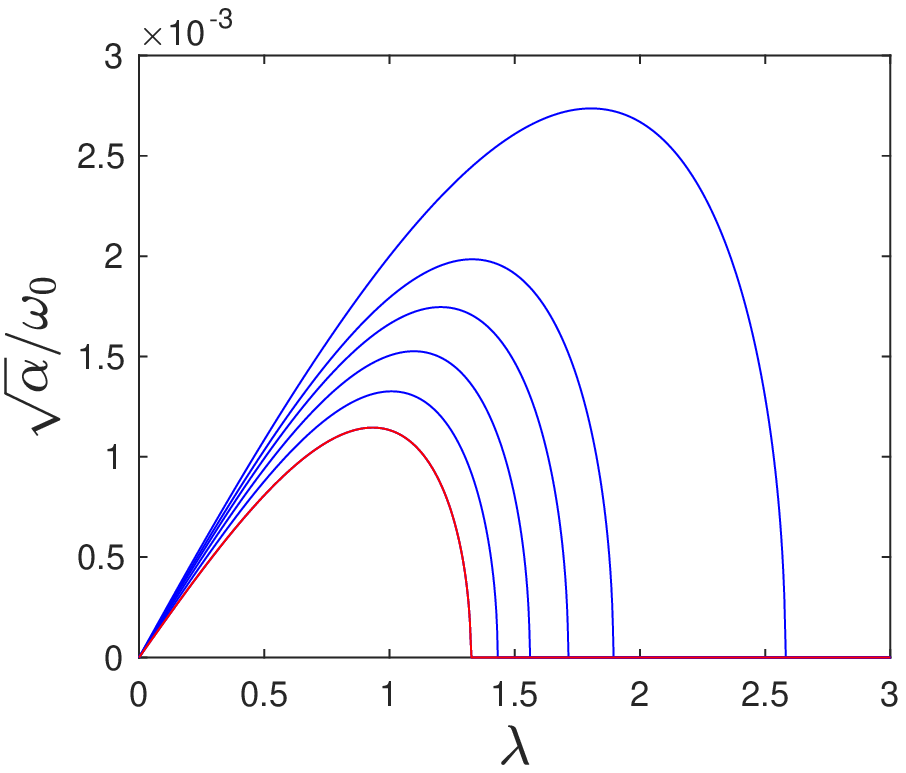}}
\hfill
\subfloat{\includegraphics[width=.5\linewidth]{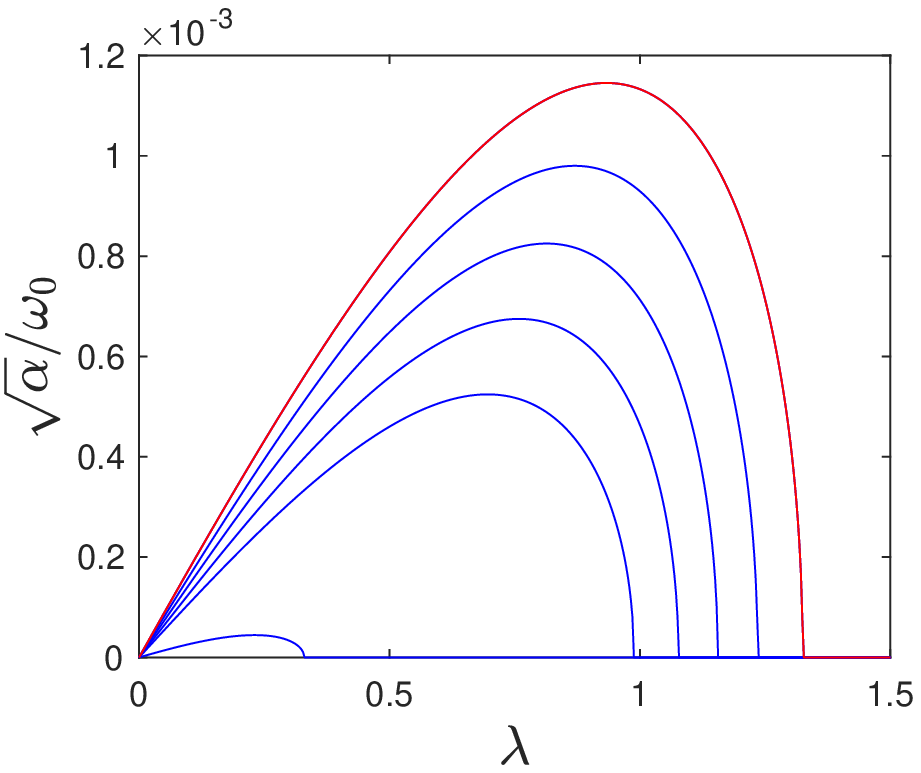}}
\caption{Regions of modulational instability according to \eqref{BFCond} for $(B_0,k_0) = (0.002,10)$.
The blue curves correspond to $\gamma \neq 0$ while the red curve corresponds to $\gamma = 0$.
Left panel: $\gamma = \{ -0.5, -1, -1.5, -2, -3.5 \}$ (expanding curves with decreasing $\gamma$). 
Right panel: $\gamma = \{ +0.5, +1, +1.5, +2, +3.5 \}$ (shrinking curves with increasing $\gamma$).}
\label{BF_inst}
\end{figure}

\begin{figure}
\centering
\includegraphics[width=.5\linewidth]{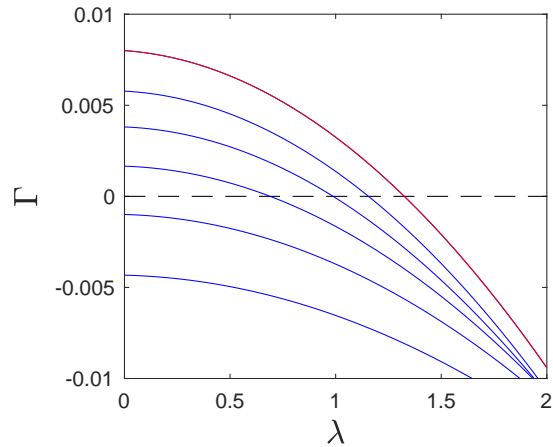}
\caption{Plots of $\Gamma$ versus $\lambda$ for $(B_0,k_0) = (0.002,10)$
and $\gamma = \{ 0, +1, +2, +3, +4, +5 \}$ (falling curves with increasing $\gamma$).
The red curve corresponds to $\gamma = 0$.}
\label{factor_inst}
\end{figure}

\begin{figure}
\centering
\includegraphics[width=.5\linewidth]{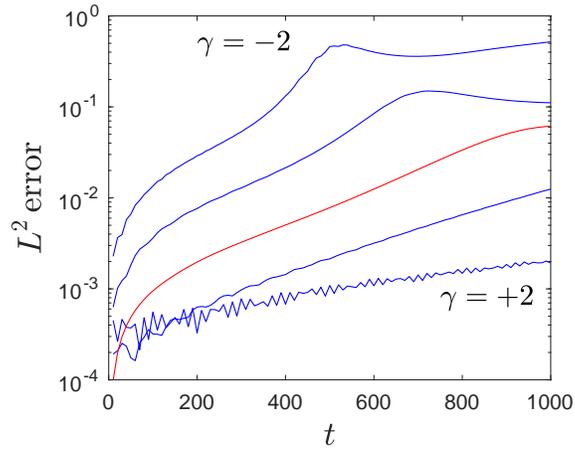}
\caption{Relative $L^2$ errors on $\eta$ between fully and weakly nonlinear solutions
for $(B_0,k_0,\lambda) = (0.002,10,1)$ and $\gamma = \{ -2, -1, 0, +1, +2 \}$.
The red curve corresponds to $\gamma = 0$.}
\label{L2err_wmp1_2}
\end{figure}

\begin{figure}
\centering
\subfloat[$\gamma = 0$, $t = 0$]{\includegraphics[width=.5\linewidth]{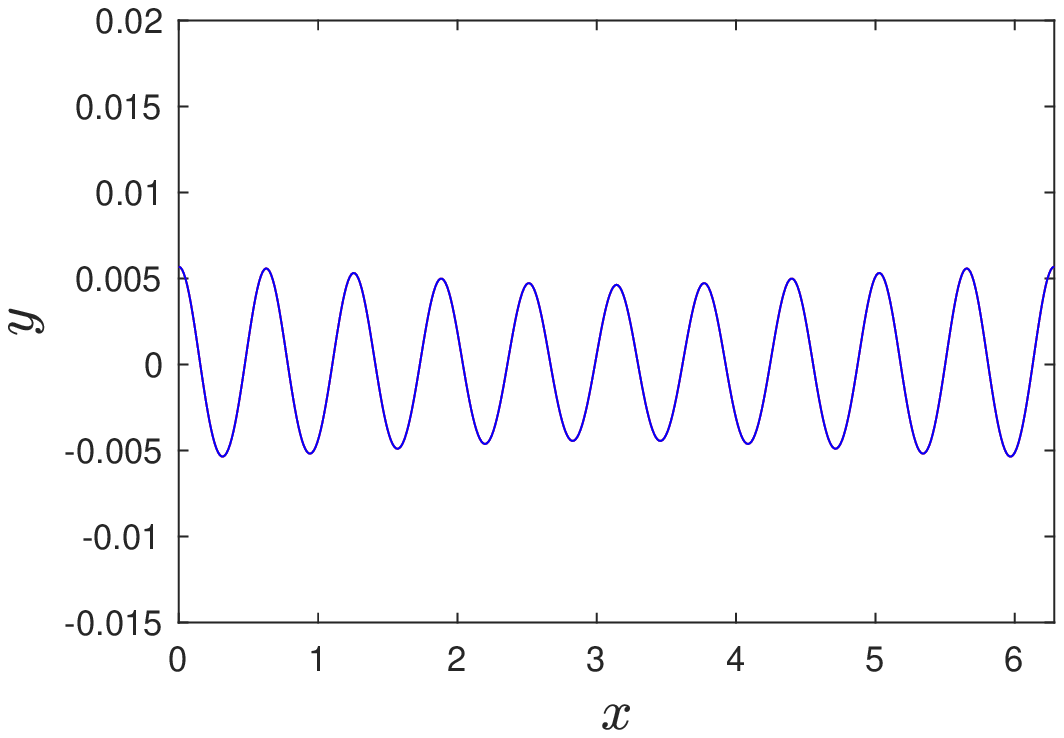}}
\hfill
\subfloat[$\gamma = 0$, $t = 940$]{\includegraphics[width=.5\linewidth]{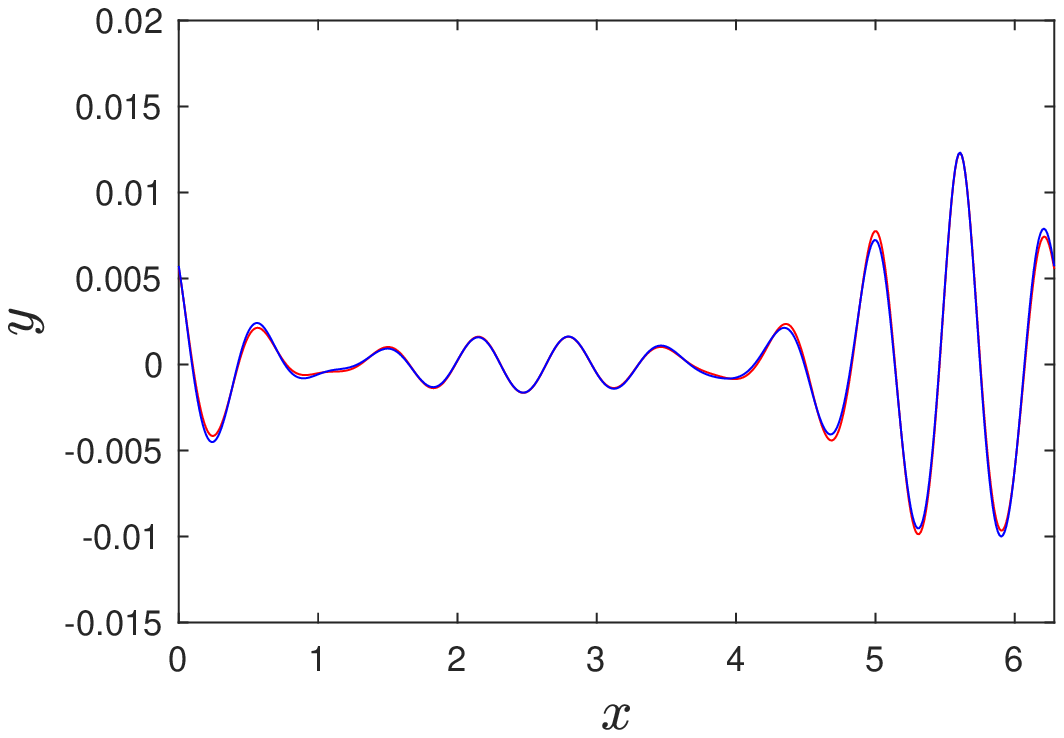}}
\hfill
\subfloat[$\gamma = -1$, $t = 680$]{\includegraphics[width=.5\linewidth]{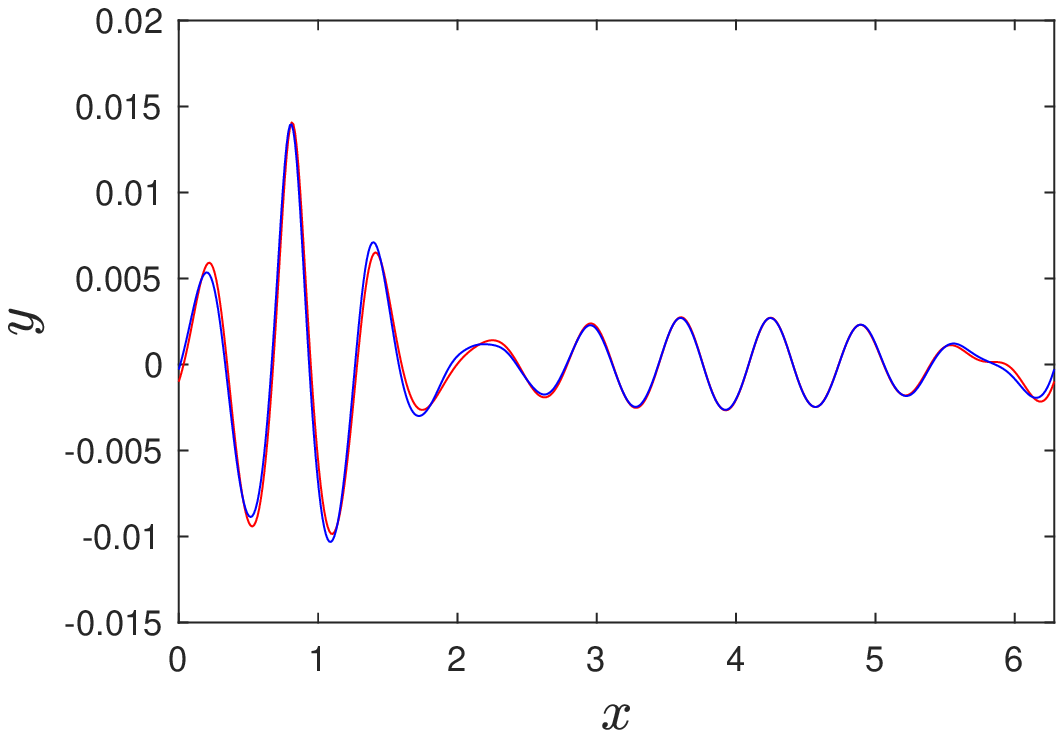}}
\hfill
\subfloat[$\gamma = -2$, $t = 500$]{\includegraphics[width=.5\linewidth]{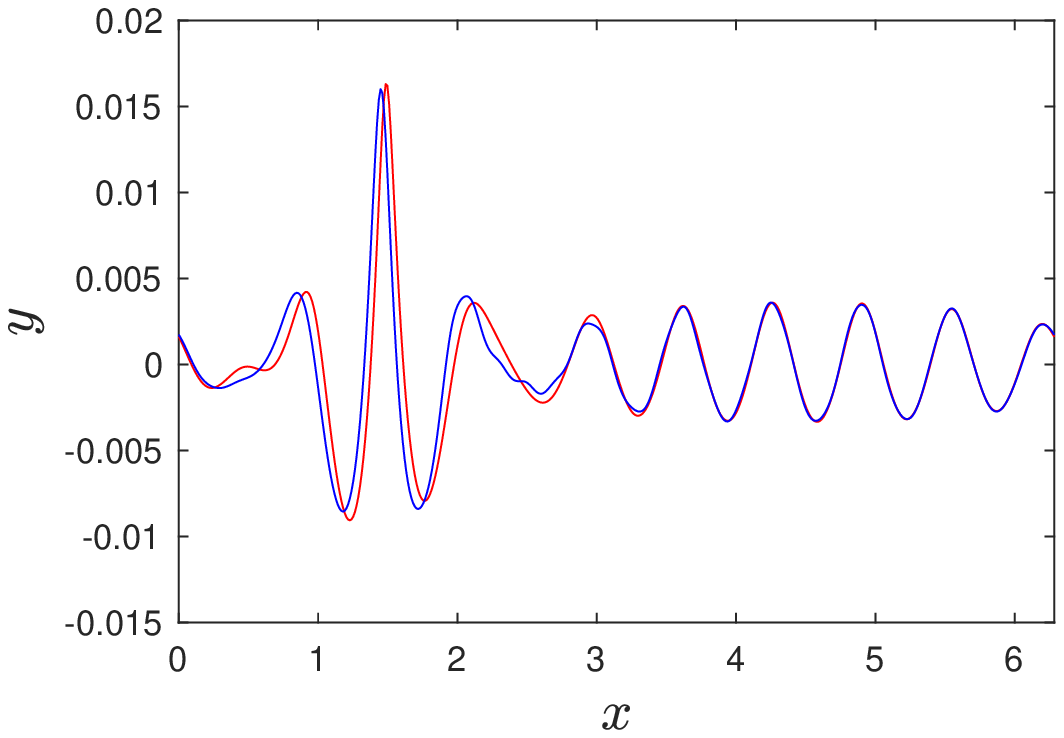}}
\hfill
\subfloat[$\gamma = +1$, $t = 1000$]{\includegraphics[width=.5\linewidth]{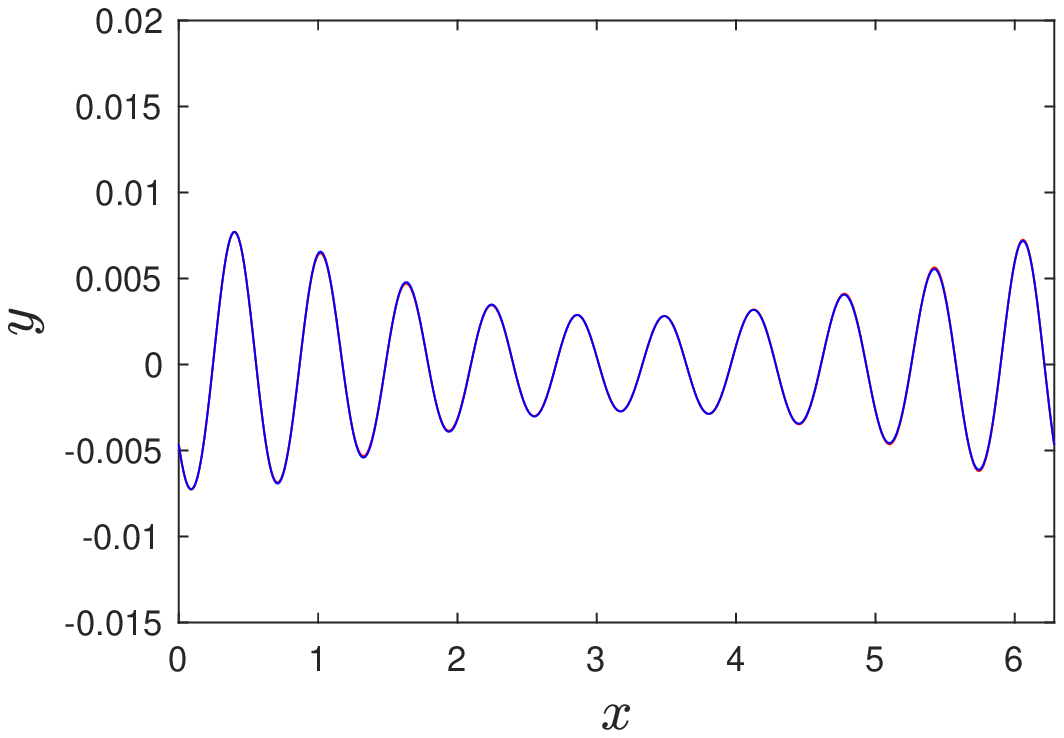}}
\hfill
\subfloat[$\gamma = +2$, $t = 1000$]{\includegraphics[width=.5\linewidth]{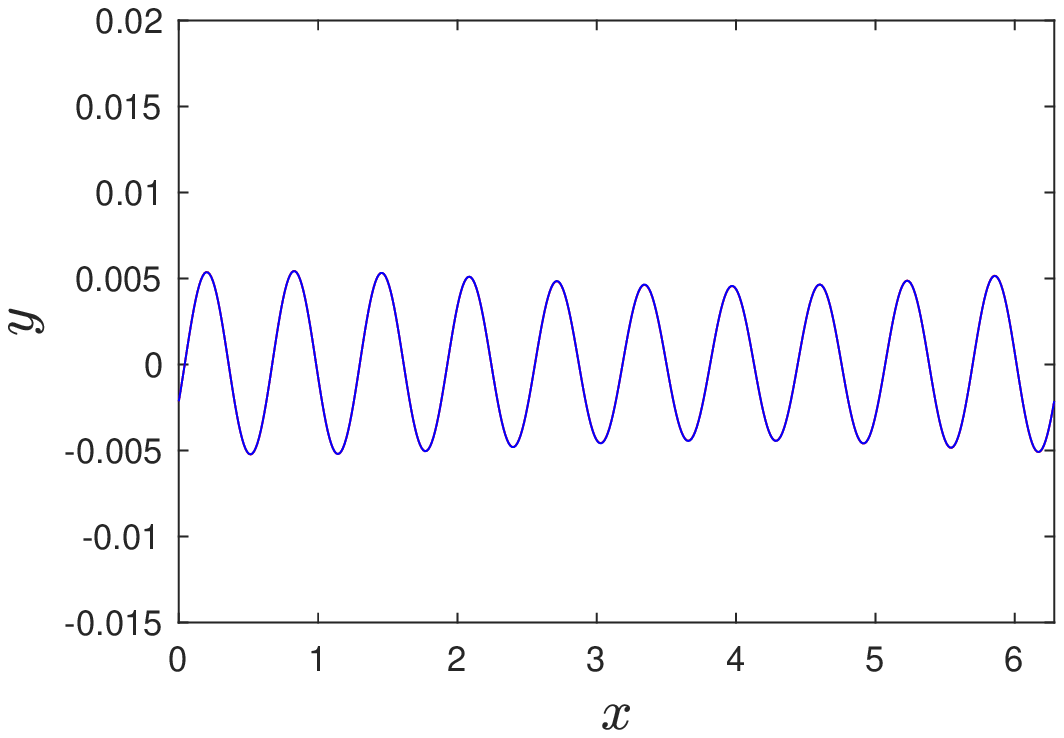}}
\caption{Comparison of surface elevations $\eta$ between fully and weakly nonlinear solutions
for $(B_0,k_0,\lambda) = (0.002,10,1)$ with
(a) $\gamma = 0$ ($t = 0$), (b) $\gamma = 0$ ($t = 940$), (c) $\gamma = -1$ ($t = 680$),
(d) $\gamma = -2$ ($t = 500$), (e) $\gamma = +1$ ($t = 1000$), (f) $\gamma = +2$ ($t = 1000$).
The blue curve represents the Hamiltonian Dysthe equation while the red curve represents the full nonlinear system.}
\label{wave_u0002_k10}
\end{figure}

\begin{figure}
\centering
\subfloat{\includegraphics[width=.5\linewidth]{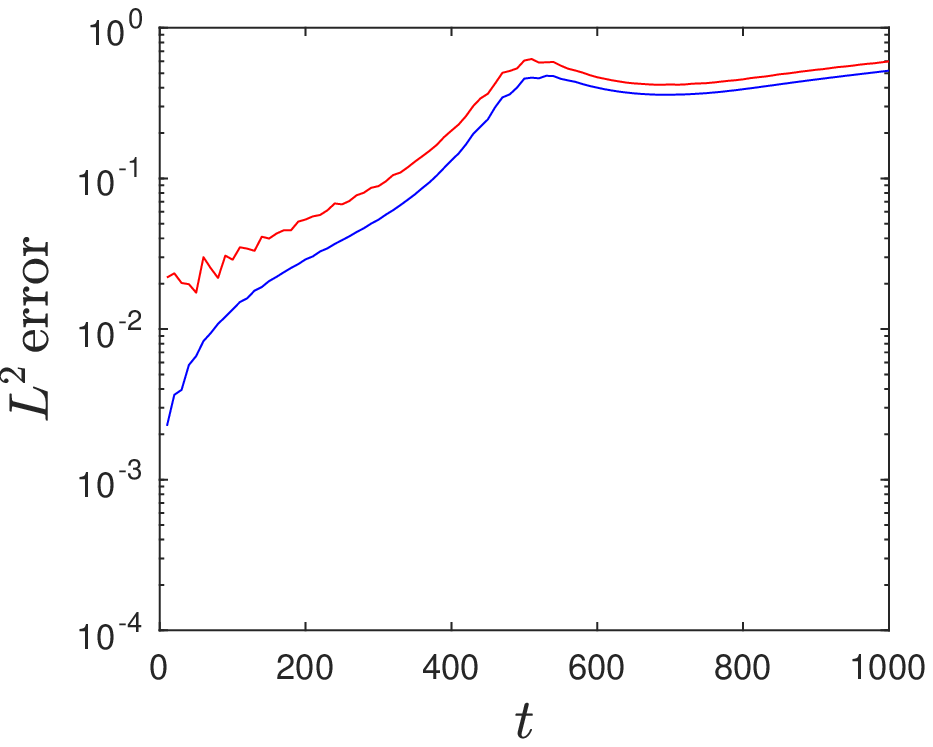}}
\hfill
\subfloat{\includegraphics[width=.5\linewidth]{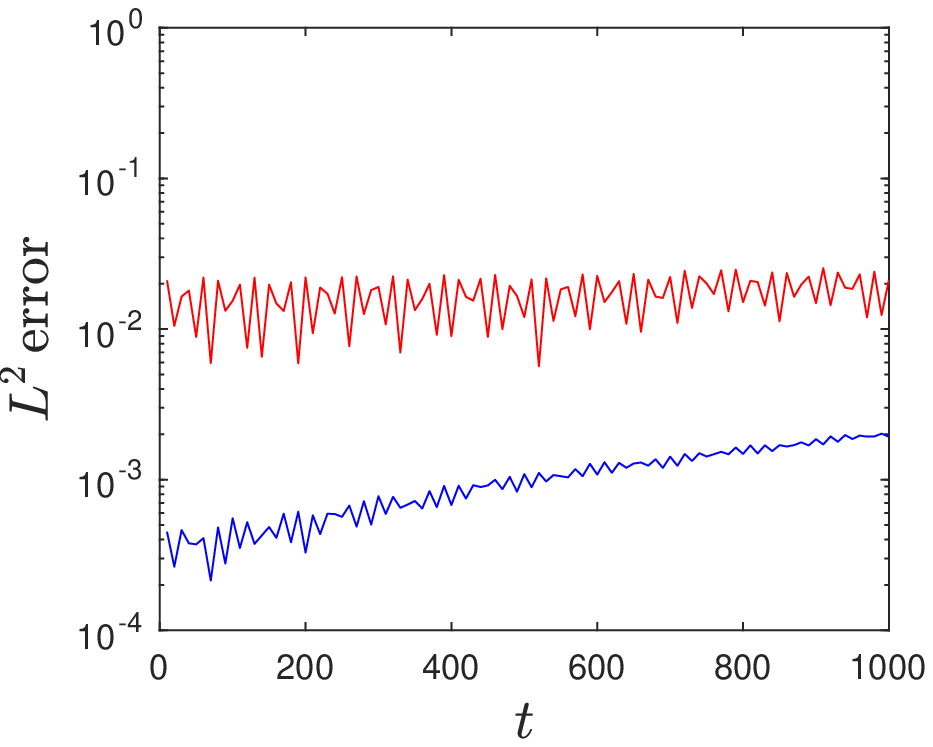}}
\caption{Relative $L^2$ errors on $\eta$ between fully and weakly nonlinear solutions
for $(B_0,k_0,\lambda) = (0.002,10,1)$.
The blue curve represents the Hamiltonian Dysthe equation while the red curve represents the classical Dysthe equation.
Left panel: $\gamma = -2$. Right panel: $\gamma = +2$.}
\label{L2comp_u0002_k10}
\end{figure}

\begin{figure}
\centering
\subfloat{\includegraphics[width=.33\linewidth]{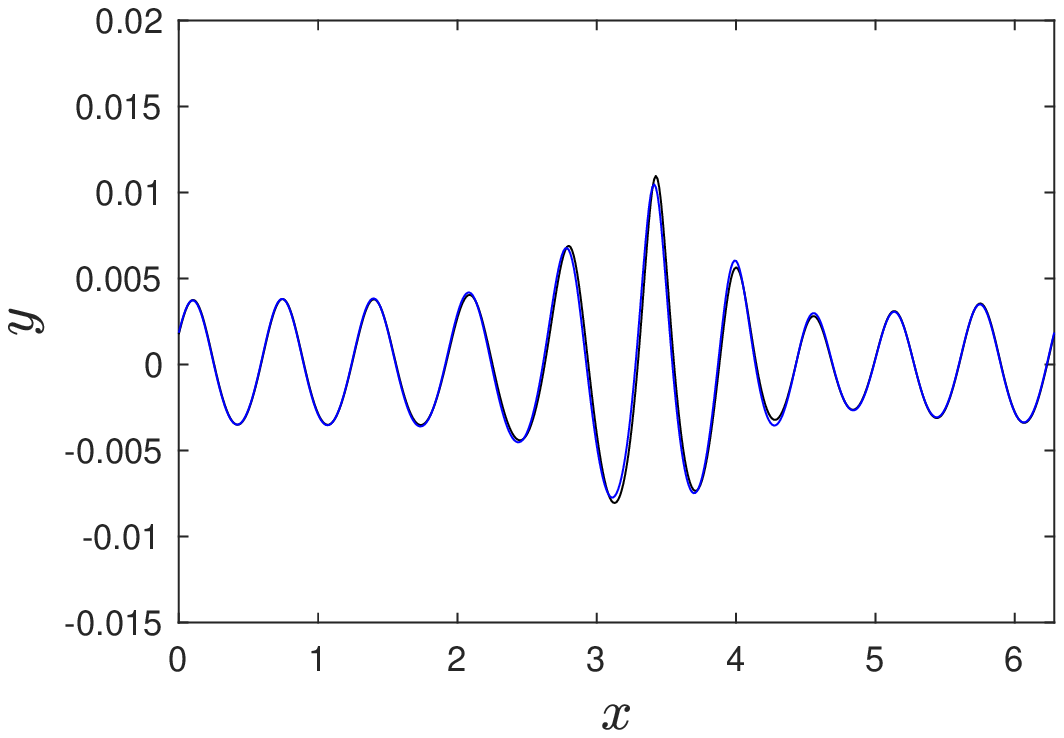}}
\hfill
\subfloat{\includegraphics[width=.33\linewidth]{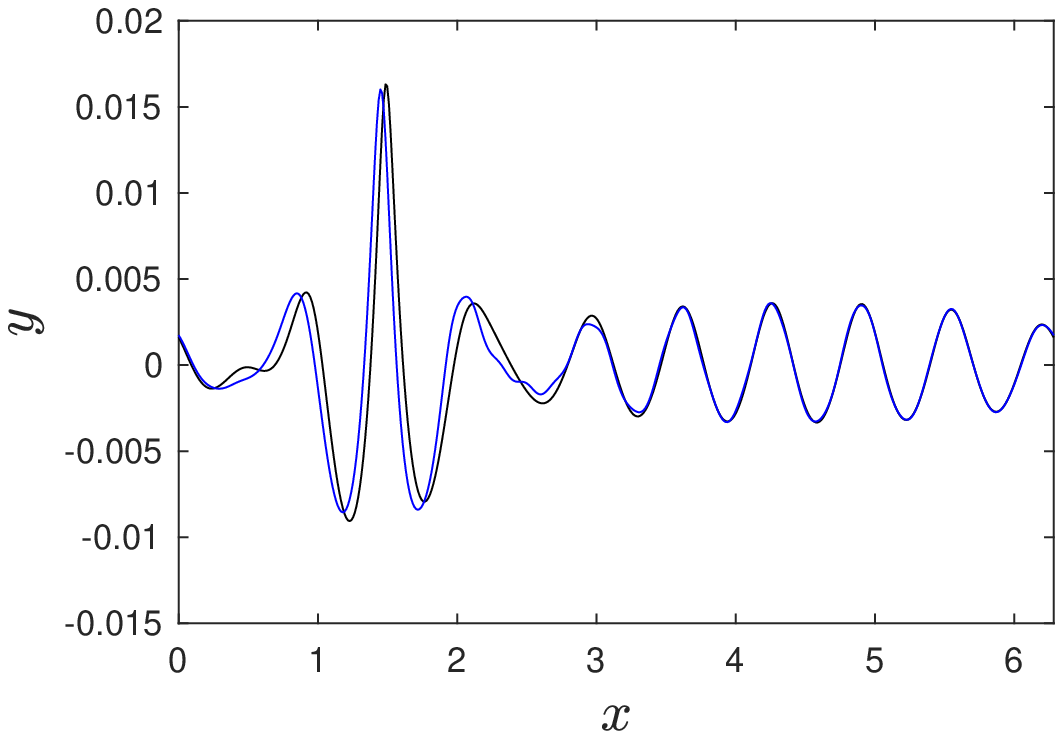}}
\hfill
\subfloat{\includegraphics[width=.33\linewidth]{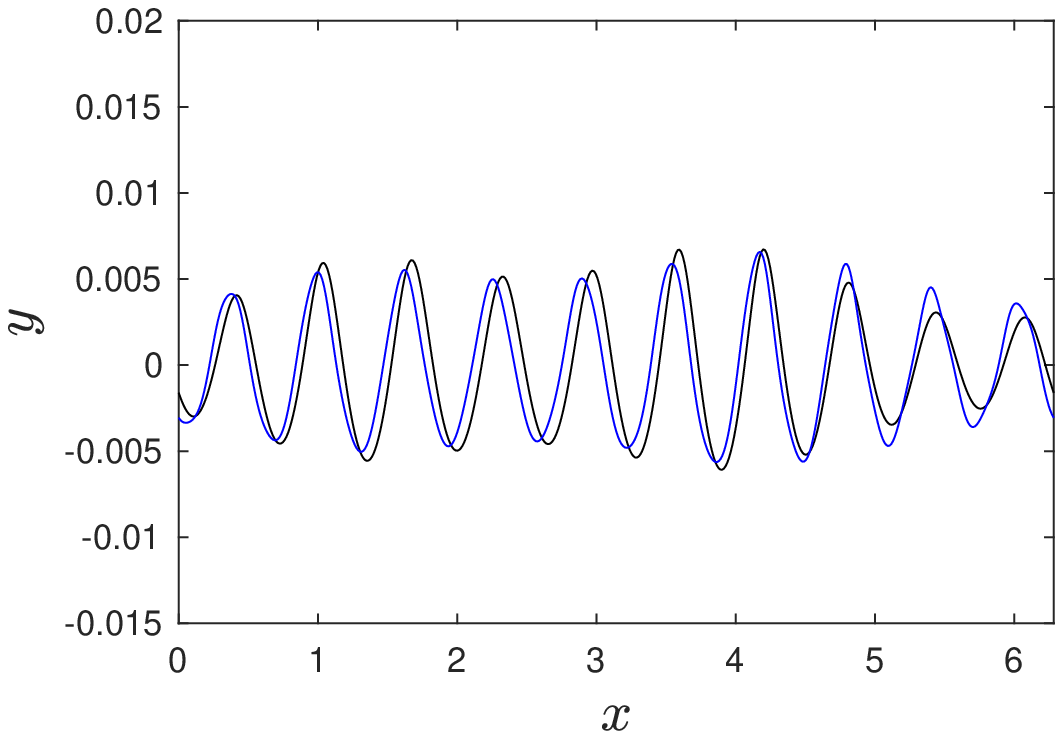}}
\hfill
\subfloat{\includegraphics[width=.33\linewidth]{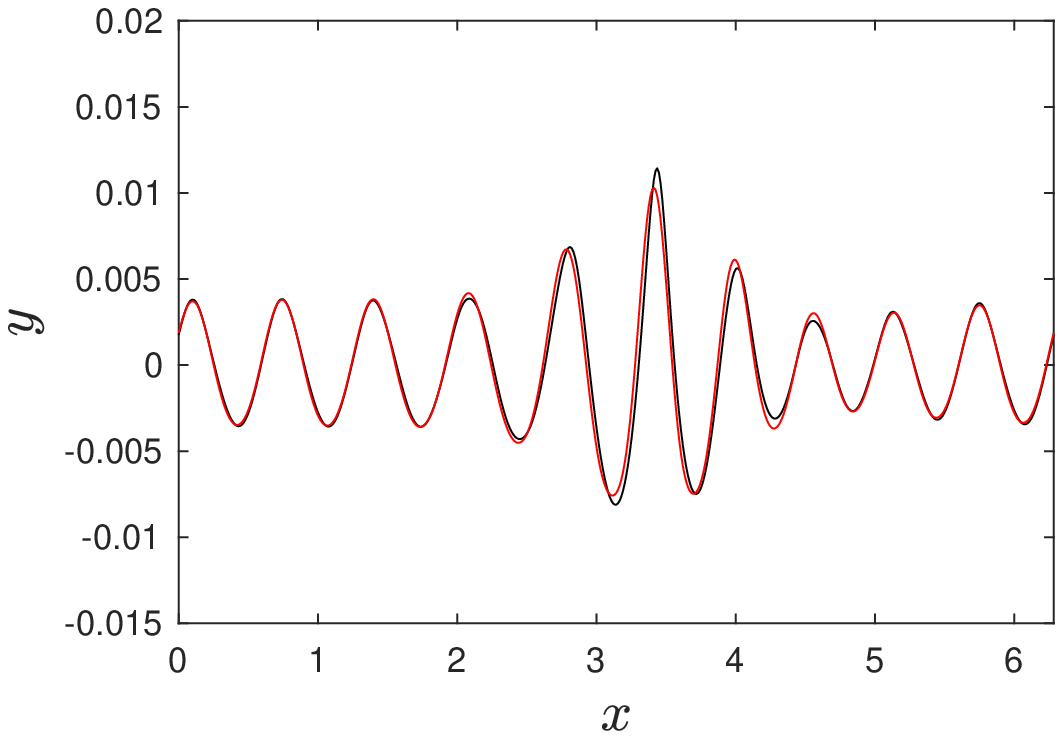}}
\hfill
\subfloat{\includegraphics[width=.33\linewidth]{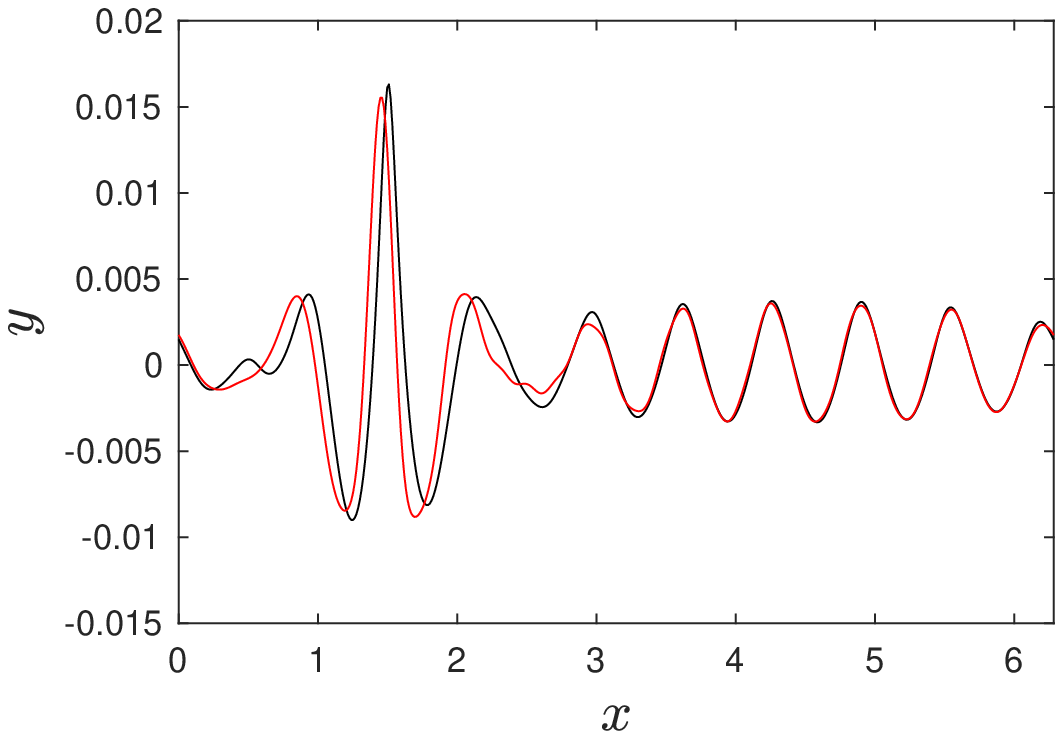}}
\hfill
\subfloat{\includegraphics[width=.33\linewidth]{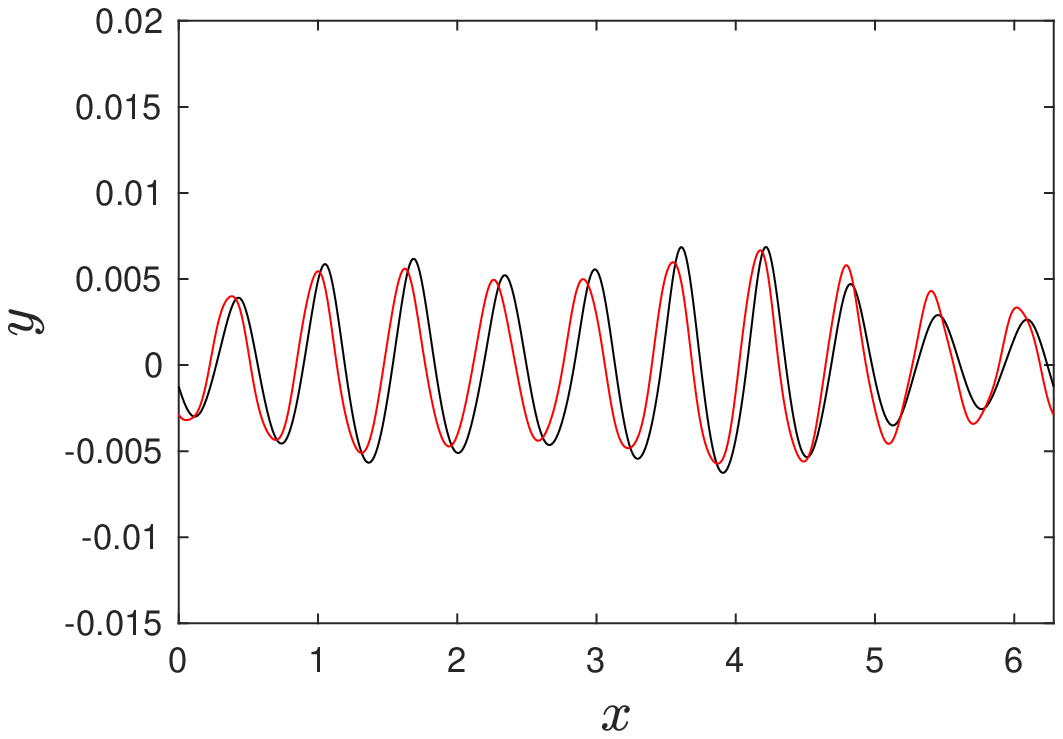}}
\caption{Comparison on $\eta$ between fully and weakly nonlinear solutions
for $(B_0,k_0,\lambda) = (0.002,10,1)$ and $\gamma = -2$ at $t = 390$, $500$, $1000$ (from left to right).
Upper panels: Hamiltonian Dysthe equation in blue.
Lower panels: classical Dysthe equation in red.
The black curve represents the full nonlinear system.}
\label{wavec_u0002_k10_wm2}
\end{figure}

\begin{figure}
\centering
\subfloat{\includegraphics[width=.5\linewidth]{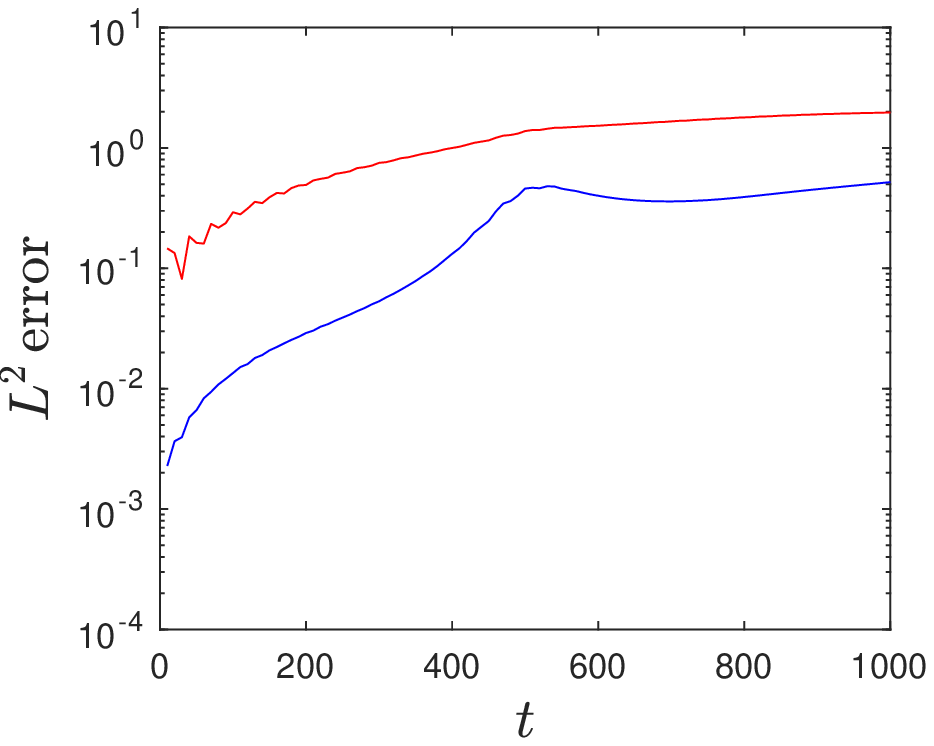}}
\hfill
\subfloat{\includegraphics[width=.5\linewidth]{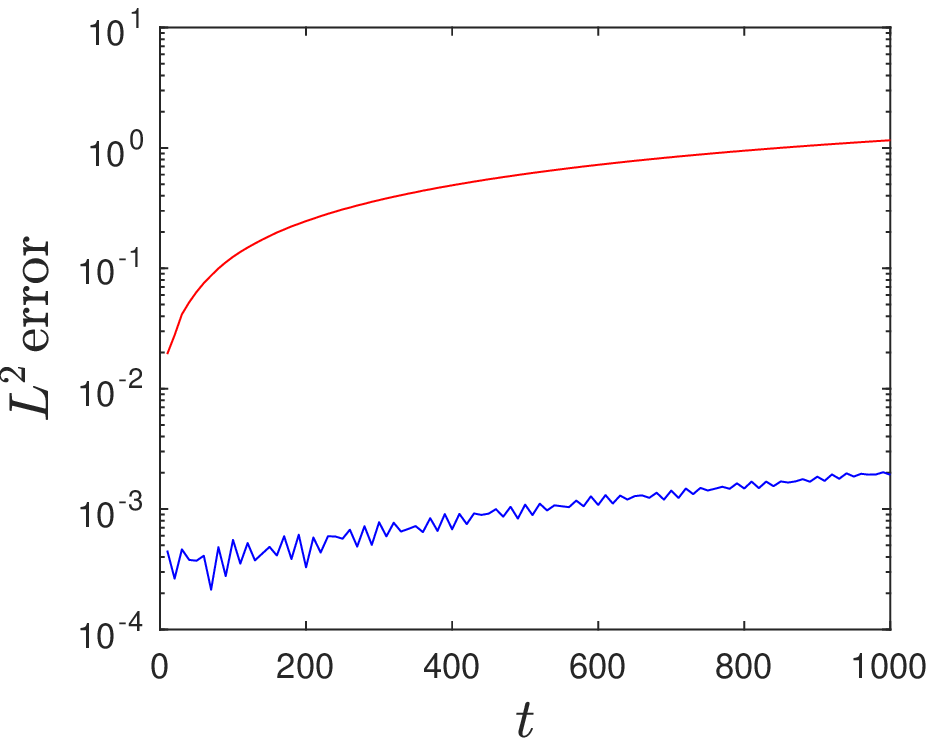}}
\caption{Relative $L^2$ errors on $\eta$ between fully and weakly nonlinear solutions
for $(B_0,k_0,\lambda) = (0.002,10,1)$.
The blue curve represents the Hamiltonian Dysthe equation with full reconstruction,
while the red curve represents the Hamiltonian Dysthe equation with partial reconstruction.
Left panel: $\gamma = -2$. Right panel: $\gamma = +2$.}
\label{L2comp1st_u0002_k10}
\end{figure}

\begin{figure}
\centering
\subfloat{\includegraphics[width=.33\linewidth]{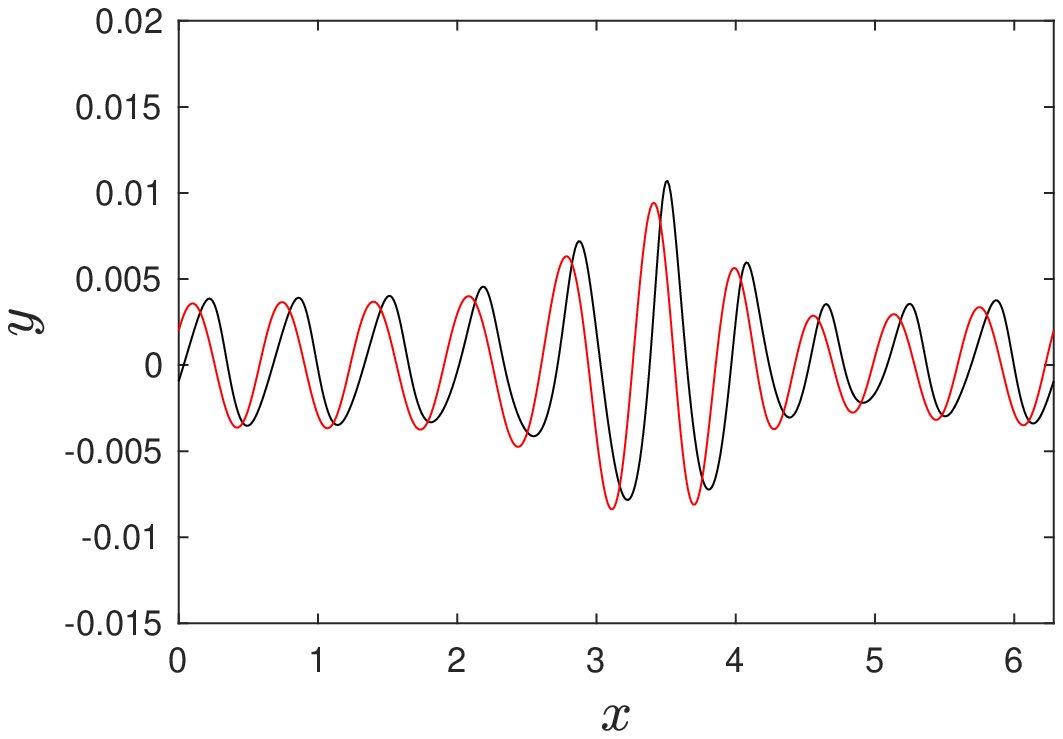}}
\hfill
\subfloat{\includegraphics[width=.33\linewidth]{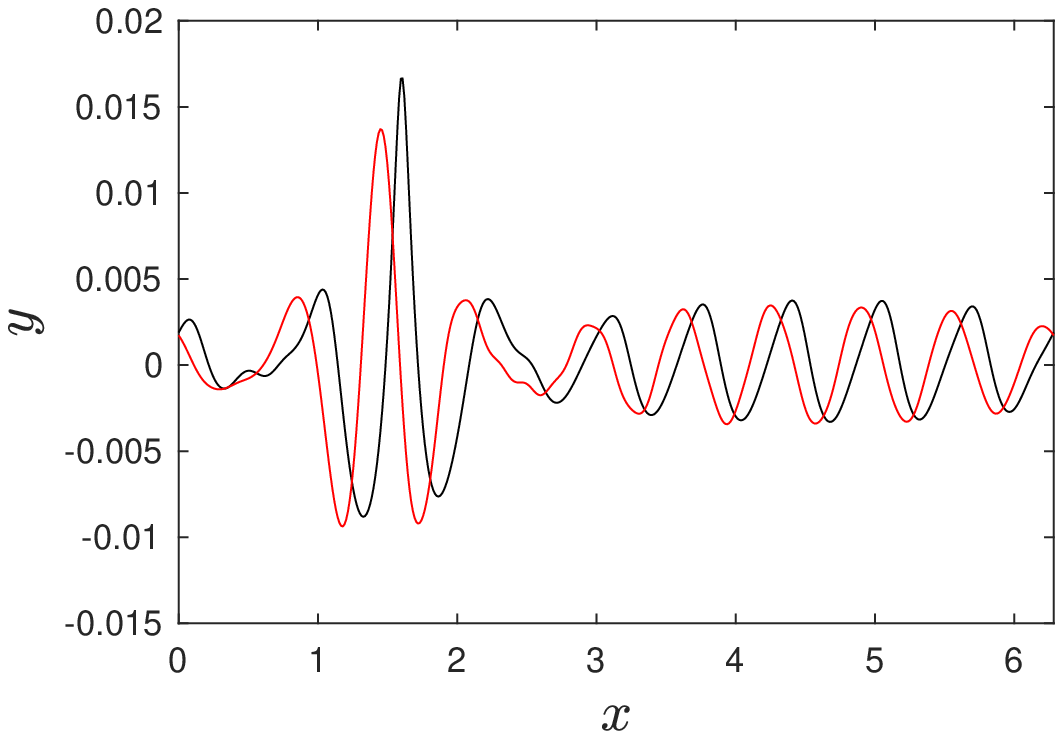}}
\hfill
\subfloat{\includegraphics[width=.33\linewidth]{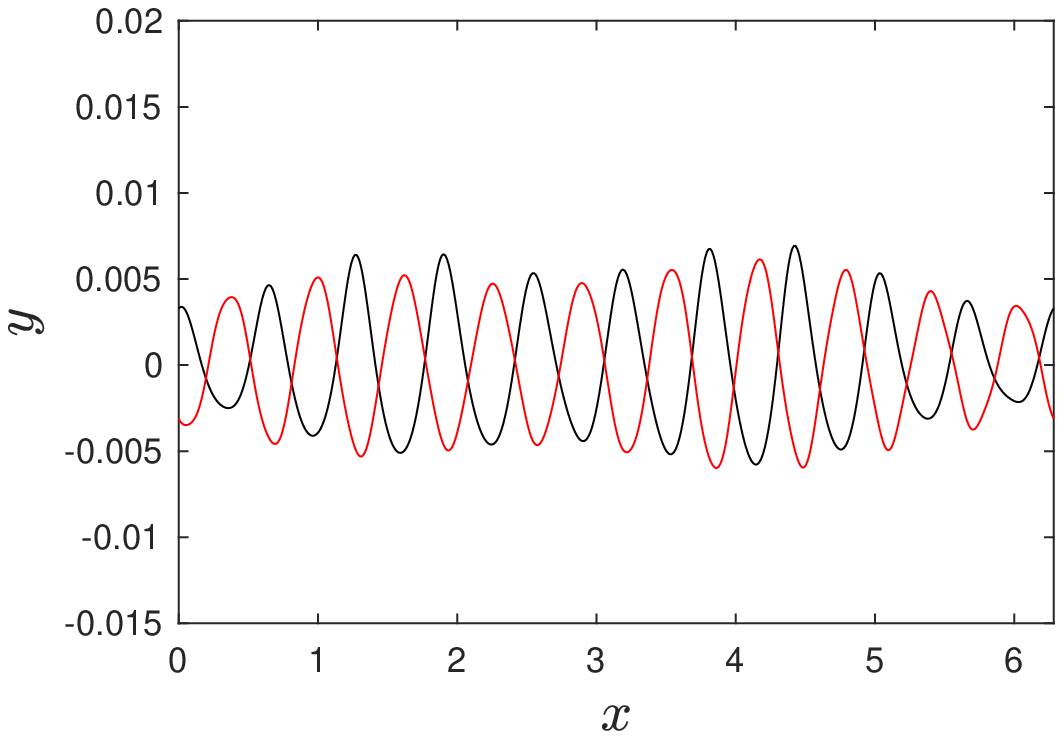}}
\hfill
\subfloat{\includegraphics[width=.33\linewidth]{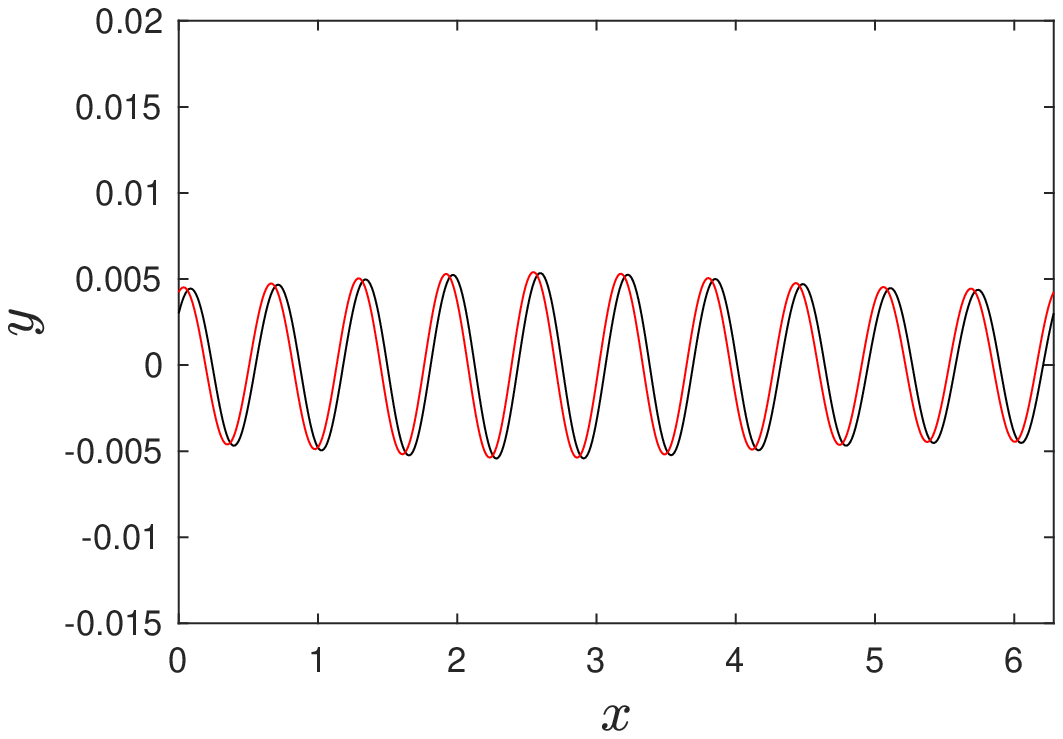}}
\hfill
\subfloat{\includegraphics[width=.33\linewidth]{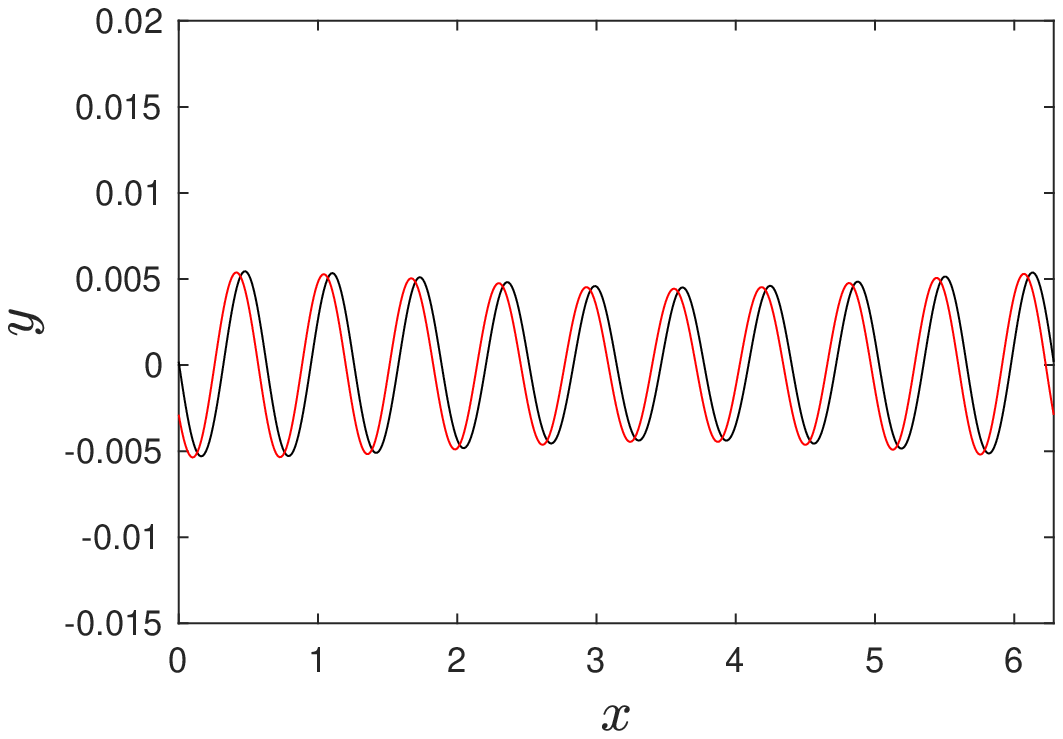}}
\hfill
\subfloat{\includegraphics[width=.33\linewidth]{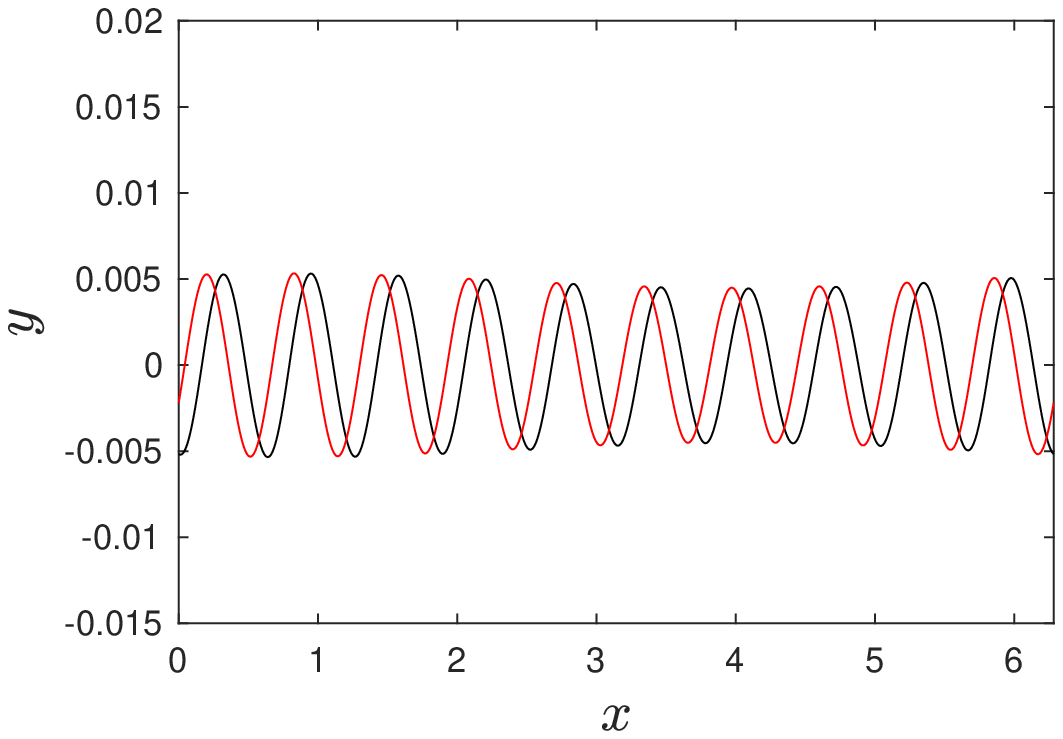}}
\caption{Comparison on $\eta$ between fully and weakly nonlinear solutions
for $(B_0,k_0,\lambda) = (0.002,10,1)$ at $t = 390$, $500$, $1000$ (from left to right).
The red curve represents the Hamiltonian Dysthe equation with partial reconstruction,
while the black curve represents the full nonlinear system.
Upper panels: $\gamma = -2$. Lower panels: $\gamma = +2$.}
\label{wave1st_u0002_k10_wm2}
\end{figure}

\begin{figure}
\centering
\subfloat{\includegraphics[width=.5\linewidth]{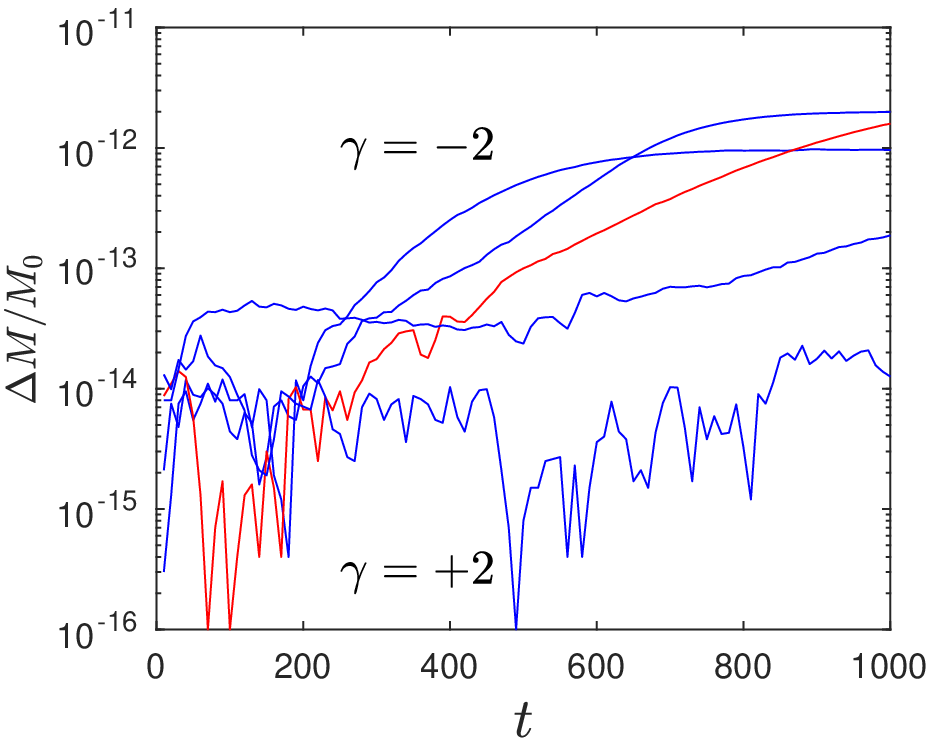}}
\hfill
\subfloat{\includegraphics[width=.5\linewidth]{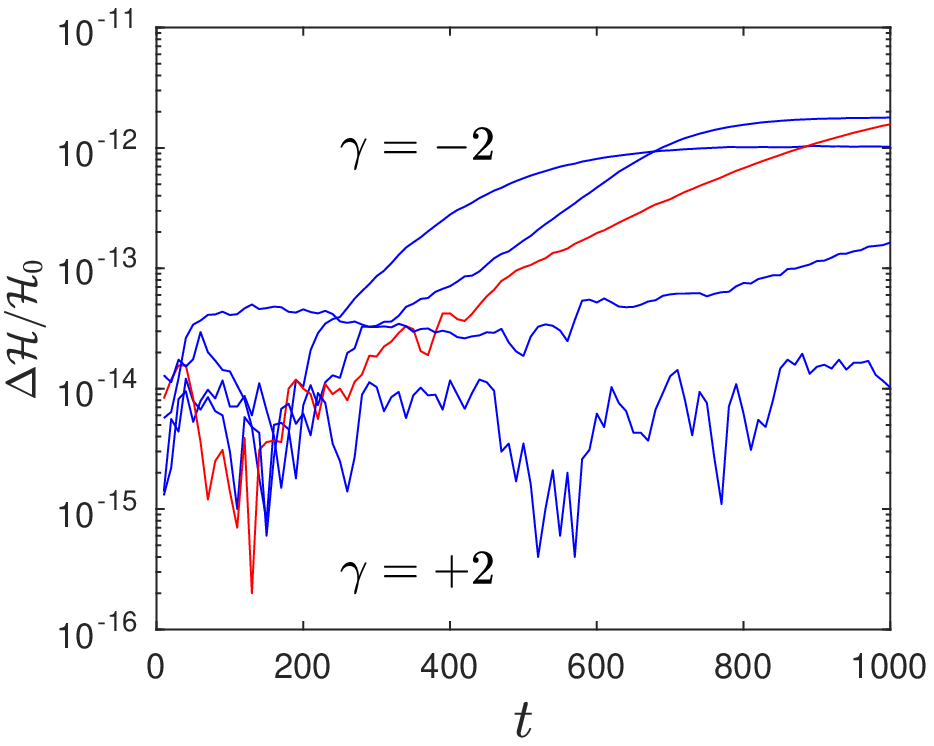}}
\caption{Relative errors on $M$ and $\calH$ for the Hamiltonian Dysthe equation 
with $(B_0,k_0,\lambda) = (0.002,10,1)$ and $\gamma = \{ -2, -1, 0, +1, +2 \}$.
The red curve corresponds to $\gamma = 0$.}
\label{ener_u0002_k10_wmp1_2}
\end{figure}

\end{document}